\numberwithin{equation}{section}
\def\E{\operatorname*{E}}
\def\Var{\operatorname*{Var}}
\def\Cov{\operatorname*{Cov}}
\def\P{\operatorname*{P}}
\begin{document}
\newtheorem{theorem}{\bf Theorem\,}[section]
\newtheorem{lemma}{\bf Lemma\,}[section]
\newtheorem{remark}{\bf Remark\,}
\newtheorem{example}{\bf Example\,}
\newtheorem{proposition}{\bf Propositionâ\,}[section]
\newtheorem{corollary}{\bf Corollary\,}[section]
\newtheorem{assumption}{\bf Assumption\,}
\newtheorem*{definition}{\bf Definition\ }

\theoremstyle{remark}
\newtheorem*{notation}{\bf Notation}

\renewcommand{\tablename}{Table}
\renewcommand\refname{\large References}

\date{}

\title{\textbf{  Consistent complete independence test in high dimensions based on 
	  Chatterjee correlation coefficient }}
 \author{Liqi Xia$^1$, Ruiyuan Cao$^1$,  Jiang Du$^{1,2,*}$ and Jun Dai$^1$\bigskip\\
	1. School of Mathematics, Statistics and Mechanics, Beijing University of Technology,\\ Beijing 100124, China \bigskip\\
	2. Beijing Institute of Scientific and Engineering Computing, Beijing 100124, China\bigskip\\
	E-mail address: dujiang84@163.com (Jiang Du)}
\maketitle

\maketitle{\textbf{Abstract:}} 
In this article, we consider the complete independence test of high-dimensional data. Based on 
Chatterjee coefficient, we pioneer the development of quadratic test and extreme value test which possess good testing performance for oscillatory data, and establish  the corresponding  large sample properties  under both   null hypotheses and alternative hypotheses. In order to overcome the shortcomings of quadratic statistic and extreme value statistic, we propose a testing method termed as power enhancement test  by adding a screening statistic to the quadratic statistic. The proposed method do not reduce the testing power under dense alternative hypotheses, but   can  enhance the power significantly  under sparse alternative hypotheses. Three synthetic data examples and two real data examples are further used to illustrate the performance of our proposed methods.

{\bf\large Keywords}: High-dimensional data; Rank correlation; independence test; Distribution-free; Chatterjee coefficient.
\section{Introduction}

Let  $ \boldsymbol{X}=\left(X_{1}, \ldots, X_{p}\right)^{\top}  $ be a random vector taking values in  $ \mathbb{R}^{p}  $ with all univariates following continuous marginal distributions $F_{k}\left( X_k\right) $ $ (1\leqslant k \leqslant p $) and all bivariates following  continuous joint distributions $ F_{kl}\left( X_k,X_l\right) $ ($ 1\leqslant k \neq l\leqslant p $). A natural consideration for testing independence issue of multivariate variables is
\begin{eqnarray} \label{H1.1}
	 H_{0}: X_{1}, \ldots, X_{p} \quad \text { are mutually independent } 
\end{eqnarray}
based on  $ n $  independent identically distributed (i.i.d.)  realizations  $ \boldsymbol{X}_{1}, \ldots, \boldsymbol{X}_{n}$  from  $\boldsymbol{X}  $, where $\boldsymbol{X}_i=(X_{i1},\ldots,X_{ip})^\top,i=1,\ldots,n$. This article mainly focuses on the independence test of high-dimensional data, that is, the dimension $ p $ of the random vector $  \boldsymbol{X} $ exceeds the sample size $ n $. There are currently several prevalent asymptotic regimes regarding $ (n, p) $ for high-dimensional data. The most common case is that the sample size $ n $ and dimension $ p $ are comparable or $ p $ may go to infinity in some way as $ n $ does. For instance, as $ n $ and $ p $ go to infinity, $ n/p$ converges to a constant $\gamma \in(0,\infty)$, there is already a large amount of relevant research, see \cite{mao2014new}, \cite{Schott2005testing}, \cite{bao2015spectral}, \cite{mao2017robust}, etc. Obviously, the above restrictions on $ n $ and $ p $ are excessively stringent, therefore,  researches released the  constraint for both $ n $ and $ p $ for  tending towards infinity, relevant literatures see \cite{paindaveine2016high}, \cite{leung2018testing}, \cite{yao2018testing}, etc. Undoubtedly, only restricting  $ p $ to infinity, regardless of whether $ n $ tending to infinity or is fixed, is the most relaxed constraint, and this is exactly the asymptotic mechanism we pursue in this article to establish asymptotic null distribution of quadratic statistics, also see \cite{mao2015note}, \cite{mao2018testing}, etc.

According to different data architectures, statistical methods dedicated to high-dimensional independence testing are mainly divided into three major types. The first type is the quadratic type (i.e. $ \mathcal{L}_{2} $-type test) which exhibits good testing performance only under alternative hypotheses of dense data, latest representative works include \cite{mao2017robust},  \cite{mao2018testing}, \cite{yao2018testing}, \cite{leung2018testing}, etc. The second type is extreme value type (i.e. the maximum-type test or $ \mathcal{L}_\infty $-type test ), which requires a sparse data architecture. The performance of extreme value test will only validate when there are relatively few dependent paired variables present, refer to \cite{cai2011limiting}, \cite{han2017distribution}, \cite{drton2020high}, etc. Specially noteworthy, newly published \cite{shi2023max} has developed a third type test  that can simultaneously process two types of data. They minimize the $p$-value of the extreme value test and the $p$-value of the quadratic type test as the  new test statistic, called max-sum statistic  in their article. The  max-sum test method has been proven to have good superiority over other  methods which are tailored for individual data types under the dense or sparse alternative hypotheses. Meanwhile, we have been exploring both quadratic test and extreme value test, as well as pursuing a new type of test that can tackle both dense and sparse data simultaneously. To be specific, our approach is quite different from theirs. Consequently, we propose a new test method based on  quadratic type test by adding a screening statistic that can control extreme signal in sparse alternative data. The newly proposed test has been verified in the subsequent numerical simulation to have no decrease in test power under  dense alternative hypotheses, while enhance its power  significantly   under   sparse alternative hypotheses.

Lately, a new coefficient, Chatterjee rank correlation coefficient (\cite{chatterjee2021new}), has entered the family of correlation coefficients and has the following desirable merits, but not limited to them.
\begin{enumerate}
 \item [(a)]  It consistently estimates a population quantity (\cite{dette2013copula}) which is 0 if and only if two variables are independent and 1 if and only if one is a measurable function of the other. This population, also known as the correlation measure, can detect both linear and nonlinear dependencies  and is particularly friendly to oscillatory dependencies. 
 \item [(b)]  It follows a  normal distribution asymptotically under the hypothesis of independence. In fact, under the dependent alternative hypothesis, its normal limit distribution theory has also been derived (\cite{lin2022limit}). 
 \item [(c)]  The characteristic of its rank based statistic leads to the corresponding test fully distribution-free. This attractive feature is clearly ensured by rank-based tests for continuous distributions. 
\end{enumerate}

To our knowledge, Chatterjee's new coefficient has not been applied to the independence test of high-dimensional data. However, the test of high-dimensional oscillatory data in the fields of biology and medicine is rarely undertaken by ordinary correlation coefficients. Typically, numerous biological systems oscillate over time or space, from metabolic oscillations in yeast to the physiological cycle in mammals. Rapidly increasing, bioinformatics techniques are being utilized to quantify these biological oscillation systems, resulting in an increasing number of high-dimensional datasets with periodic oscillation signals.  Classic high-dimensional tests based on linear or monotonic correlation coefficients are difficult to generalize to handle these high-dimensional periodic data. Therefore, high-dimensional tests based on Chatterjee's  new coefficients are bound to be a refreshing trend.

Our work mainly involves three aspects, and to our knowledge,  similar works using Chatterjee rank correlation for these aspects are vacant. 
\begin{enumerate}
\item We derived the fourth-order moment and covariance of Chatterjee rank correlation under the null hypothesis through brute force, and perhaps there is a shortcut worth patiently exploring for their solution, similar to higher-order moments of Pearson or Spearman coefficients. Furthermore, we proved that the quadratic statistic of high-dimensional test follows a standard normal distribution after appropriate pairwise combinations. Under dense alternative hypotheses, our quadratic test power uniformly converges to 1. 

\item To address the issue of low power under sparse alternative hypotheses, we applied extreme value statistic to test the sparse alternative signal. Under $ H_0 $, the extreme value statistic was validated to follow Gumbel distribution with slight necessary adjustments. The theoretical and simulation results demonstrated that our extreme value test statistic has high power against  sparse alternative hypotheses. 

\item In order to overcome the shortcomings of quadratic statistic and extreme statistic, similar to  \cite{fan2015power}, we added a screening statistic to the quadratic test statistic  to  detect sparse noise. Consequently, the power of our test will not decrease under dense alternative hypotheses, but can be enhanced under sparse alternative hypotheses. The screening statistic can screen  out variables with significant  dependent relationships.
\end{enumerate}
	
The remaining of this article is arranged as follows. In Section \ref{section2}, the variance of the squared Chatterjee coefficient and the quadratic statistic are derived, and asymptotic theories of the proposed quadratic statistic under both the null hypothesis and alternative hypothesis are presented, respectively. The asymptotic distribution of the constructed extreme value test statistic  under the null hypothesis and the consistency of extreme value test under the alternative hypothesis are given in Section \ref{section3}. In Section \ref{section4}, we search for a threshold to control extreme distribution noise under mild conditions. Based on these theories, we further introduce a screening statistic   and a screening set  that can enhance the power of the quadratic test. In addition, we also explore the properties of the power enhanced test and the screening set  under both the null and alternative hypotheses. In Section \ref{section5}, we generate three synthetic data examples to verify the performance of our proposed tests and compare them with existing tests. In Section \ref{section6}, we analyze two real datasets, the leaf dataset and the circadian gene transcription dataset, to illustrate the practical application   of the proposed tests. We summarize the entire article in Section \ref{section7}. We defer all proofs to Appendix \ref{appendix}.

\begin{notation}
Superscript ``$ {\top} $"   denotes transposition. Throughout, $ c $, $ C $, $ C_1 $ and $ C_2 $ refer to positive constants whose values may differ in different parts of the paper.  The set of real numbers is denoted as $ \mathbb{R} $.   For any two real sequences $  \left\{a_{n}\right\} $  and  $ \left\{b_{n}\right\}  $, we write $ a_{n}=O\left(b_{n}\right) $ if there exists $  C>0 $  such that  $ \left|a_{n}\right| \leqslant C\left|b_{n}\right| $, $ a_{n} \asymp b_{n} $  if there exists $ c $, $ C>0 $  such that  $ c\left|a_{n}\right| \leqslant\left|b_{n}\right| \leqslant C\left|a_{n}\right| $  and $ a_{n}=o\left(b_{n}\right) $  if for any  $ c>0 $ such that  $ \left|a_{n}\right| \leqslant c\left|b_{n}\right|  $  for any large enough  $ n $.  A set consisting of $ n $ distinct elements $ x_1, \cdots , x_n $ is written as either $ \{x_1, \cdots , x_n\} $ or $ \{x_i\}^n_{i=1} $. Symbol ``$:= $" means ``define''.
\end{notation}

\section{High dimensional test based on sum of squared Chatterjee coefficient}\label{section2}

Assume $\{(X_{1k},X_{1l}),\cdots,(X_{nk},X_{nl}) \}$ is a finite i.i.d. sample from bivariate vector $ (X_k,X_l), 1\leqslant k\neq l\leqslant p $. Rearrange $(X_{1k},X_{1l}),\ldots,(X_{nk},X_{nl}) $ as  $(X_{(1)k},X_{[1]l}^k),\ldots,(X_{(n)k},X_{[n]l}^k)  $ such that $X_{(1)k}\leqslant\cdots\leqslant X_{(n)k}$, that is, if we denote by $ X_{(r)k} $ the $ r $th order statistic of the $ X_k $ sample value, then
the $ X_l $ value associated with $ X_{(r)k} $ is called the concomitant of the $ r $th-order statistic and is denoted
by $ X_{[r]l}^k $. Let  $ R_{il}=\sum_{j=1}^nI\left(X_{jl}\leq X_{il}\right)$ and $ R_{[i]l}^k=\sum_{j=1}^nI \left (X_{[j]l}^k\leq X_{[i]l}^k \right)$ be the rank of $ X_{il} $ and $X_{[i]l}^k $ with indicator function $I(\cdot)$, respectively. Then, Chatterjee's new coefficient proposed by \cite{chatterjee2021new} is as follows,
\begin{eqnarray}\label{xidefinition}
	 \hat{\xi}_{kl}:=\xi_n\left( \{(X_{ik},X_{il})\}_{i=1}^n\right) =1-\dfrac{3\sum^{n-1}_{i=1}\left|R_{[i+1],l}^k-R_{[i]l}^k\right|}{n^2-1},\ 1\leqslant k \neq l\leqslant p.
\end{eqnarray}
 $ \hat{\xi}_{kl} $ can be deemed as a consistent estimator of the population $ \xi_{kl} $ which is an associate measure used to detect functional dependency relationships between two components $ X_k $ and $ X_l $ in $  \boldsymbol{X} $.  Thus, the hypothesis test (\ref{H1.1}) considered in this article can be reset to the following form,
$$H_{0}:\xi_{kl}=0,\ \ \ \ \ \ \ 1\leqslant k \neq l\leqslant p \ \ \ \ \text{vs}$$ 
$$H_{a}:\text{there is at least one pair $ (k,l) $ such that }  \xi_{kl}>0,\ \ \ 1\leqslant k \neq l\leqslant p,$$
which is further equivalent to 
 $$H_{0}:\boldsymbol{\xi}_p=\boldsymbol{0},\ \ \ \ \ \ \text{vs}\ \ \ \ \ \ H_{a}:\boldsymbol{\xi}_p\neq\boldsymbol{0},$$
 where $ \boldsymbol{\xi}_p=\left(\xi_{12},\cdots,\xi_{1p},\xi_{21},\cdots,\xi_{2p},\cdots,\xi_{p-1,p} \right)^{\top}\in \mathbb{R}^{p(p-1)}$ with consistent estimator   presented as $ \boldsymbol{\hat{\xi}}_p=\left(\hat{\xi}_{12},\cdots,\hat{\xi}_{1p},\hat{\xi}_{21},\cdots,\hat{\xi}_{2p},\cdots,\hat{\xi}_{p-1,p} \right)^{\top}\in \mathbb{R}^{p(p-1)}$. All such $ H_a $'s constitute the alternative hypothesis space $ \boldsymbol{\Xi}_a$, i.e., $ \boldsymbol{\Xi}_a=\{\boldsymbol{\xi}_p\in\mathbb{R}^{p(p-1)}:\boldsymbol{\xi}_p\neq\boldsymbol{0}\}$, and further define $ \boldsymbol{\Xi}:=\boldsymbol{\Xi}_a\cup \{\boldsymbol{0}\}$  as the parameter space of  $ \boldsymbol{\xi}_p $  that covers the union of $  \{\mathbf{0}\} $  and the alternative set $  \boldsymbol{\Xi}_{a}  $.
 
We need to emphasize that  statistic $ \hat{\xi}_{kl} $ is not symmetric, which means that $ \hat{\xi}_{kl} $ and $ \hat{\xi}_{lk} $ are not equal. A reasonable explanation is given in Remark 1 of  \cite{chatterjee2021new}. Actually, $ \xi_{kl}$ seeks to determine whether $ X_l $ is a measure function of $ X_k $, and vice versa. Therefore, based on (\ref{xidefinition}), we construct the following quadratic statistic,
$$ T_{np}=\sum_{k\neq l}^{p} \hat{\xi}_{kl}^2 .$$
After being re-decomposed and combined, $ T_{np} $ can present a symmetric form, see  Appendix \ref{appendix} for details. We first provide the exact expectation and variance of $\hat{\xi}_{kl}$ under $ H_0 $ in the following lemma.
\begin{lemma} 	\label{lemma2.1}
	 Denote $ u_n=\E(\hat{\xi}_{kl}^2) $, $v_n^2=\Var(\hat{\xi}_{kl}^2).$  Under $ H_0 $,   it has 
	$$u_n=\dfrac{(n-2)(4n-7)}{10(n-1)^2(n+1)},$$
    $$v_n^2=\dfrac{224n^5-1792n^4+5051n^3-4969n^2-2458n+18128}{700(n-1)^4(n+1)^3}.$$
\end{lemma}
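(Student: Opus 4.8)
The plan is to exploit the distribution-free nature of $\hat{\xi}_{kl}$ under $H_0$ and reduce the whole statement to a moment computation for a single uniform random permutation. Under $H_0$ the components $X_k$ and $X_l$ are independent with continuous marginals, so (with probability one, no ties) the concomitant ranks $R_{[1]l}^k,\ldots,R_{[n]l}^k$ form a \emph{uniformly random permutation} $\pi$ of $\{1,\ldots,n\}$, independent of the ordering induced by $X_k$. Writing $a_n=3/(n^2-1)$ and
$$S:=\sum_{i=1}^{n-1}\bigl|R_{[i+1]l}^k-R_{[i]l}^k\bigr|\ \stackrel{d}{=}\ \sum_{i=1}^{n-1}|\pi(i+1)-\pi(i)|,$$
we have $\hat{\xi}_{kl}=1-a_nS$, hence $\hat{\xi}_{kl}^2=1-2a_nS+a_n^2S^2$ and $\hat{\xi}_{kl}^4=\sum_{j=0}^{4}\binom{4}{j}(-a_nS)^j$. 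Therefore
$$u_n=1-2a_n\E S+a_n^2\E S^2,\qquad v_n^2=\E\hat{\xi}_{kl}^4-u_n^2,$$
and the lemma reduces entirely to computing the first four moments $\E S,\E S^2,\E S^3,\E S^4$ of the sum of absolute consecutive gaps of a random permutation.

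Next I would expand each power $S^m=\sum_{i_1,\ldots,i_m}D_{i_1}\cdots D_{i_m}$ with $D_i=|\pi(i+1)-\pi(i)|$, and group the index tuples according to their \emph{adjacency/overlap pattern}: which indices coincide and which point to consecutive (hence value-sharing) pairs. Each pattern fixes a number $m'\le 2m$ of \emph{distinct} permutation values that actually appear; by exchangeability of $\pi$, the expectation of the associated product of absolute differences equals the average of that product over all ordered $m'$-tuples of distinct elements of $\{1,\ldots,n\}$, i.e.\ $(n)_{m'}^{-1}\sum^{\neq}\prod|\cdot|$ with $(n)_{m'}$ the falling factorial. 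Counting how many tuples fall into each pattern supplies polynomial-in-$n$ multiplicities.

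I would then evaluate the finite family of building-block sums these patterns require, for example $\sum_{a\neq b}|a-b|$, $\sum_{a\neq b}(a-b)^2$, $\sum^{\neq}_{a,b,c}|a-b||b-c|$, and the analogous triple- and quadruple-difference sums over distinct integers in $\{1,\ldots,n\}$. After removing the absolute values by ordering the arguments, each collapses to ordinary power sums $\sum i,\sum i^2,\ldots$ and is evaluated in closed form via Faulhaber's formulas, giving explicit polynomials in $n$. Substituting these with the multiplicities into the moment expansions, and then into the displays above, yields the stated rational functions after simplification. As a consistency check, one finds $\E S=(n^2-1)/3$, so $\E\hat{\xi}_{kl}=0$ and $u_n=\Var(\hat{\xi}_{kl})=a_n^2\,\Var(S)=a_n^2\bigl(\E S^2-(\E S)^2\bigr)$, which streamlines the computation of $u_n$.

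The main obstacle is the fourth moment $\E S^4$. Because the gap variables $D_i$ carry absolute values, the product does not factor and one cannot shortcut through cumulants of the signed differences $\pi(i+1)-\pi(i)$; instead one must enumerate \emph{all} overlap patterns of four consecutive-pair indices (isolated pairs, pairs sharing a single endpoint in various chains, and runs of length up to four), compute for each the exact average of a product of up to four absolute differences over distinct-value tuples, and track the exact integer multiplicities so that the contributions at every order in $n$ combine correctly. This bookkeeping—rather than any single clever idea—is the heavy, error-prone step, and is precisely the ``brute force'' computation referred to in the introduction.
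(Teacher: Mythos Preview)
Your proposal is correct and follows essentially the same route as the paper: reduce to a uniform random permutation, expand $S^m$ into products $A_{i_1}\cdots A_{i_m}$ grouped by adjacency/overlap pattern, evaluate each pattern's expectation as an average over ordered tuples of distinct integers in $\{1,\ldots,n\}$, and combine with the combinatorial multiplicities. The only minor difference is that the paper does not derive $u_n$ from scratch but cites Lemma~2 of \cite{zhang2023asymptotic} for $\E(\hat\xi_{kl}^2)$ and then back-solves $\E S^2$ from it, whereas you propose computing $\E S^2$ directly; the fourth-moment computation is carried out in both by the same brute-force pattern enumeration you describe.
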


Under $ H_0 $, since $\hat{\xi}_{kl}$ is obtained by ordering, $ \hat{\xi}_{kl} $ and $ \hat{\xi}_{km}$ ($ k\neq l\neq m $) are independent. Similarly, there are also $ \hat{\xi}_{lk} $ and $ \hat{\xi}_{mk} $ ($ k\neq l\neq m $), $ \hat{\xi}_{kl} $ and $ \hat{\xi}_{mq} $ ($k\neq l\neq m \neq q$). It should be noted that $ \hat{\xi}_{kl}^2 $ and $ \hat{\xi}_{lk}^2 $ are not independent. Next, we provide the exact variance of $ \hat{\xi}_{kl}^2 $ and $ \hat{\xi}_{lk}^2 $  as well as the variance of $ T_{np} $, although it can be seen from Appendix \ref{appendix} that their derivation may seem cumbersome.
\begin{lemma} \label{lemma2.2}
	  Under $ H_0 $, denote $ \mu_{np}=\E(T_{np}) $, $ \sigma_{np}^2 = \Var(T_{np}) $, then
	  $$\mu_{np}=p(p-1)u_n $$ and
	 $$\Cov(\hat{\xi}_{kl}^2,\hat{\xi}_{lk}^2)=\dfrac{(n-2)(784n^5-8022n^4+27301n^3-24228n^2-5045n-44070)}{50n(n+1)^4(n-1)^5}.$$
	 Furthermore,
	\begin{eqnarray*}
	 &&\sigma_{np}^2 =p(p-1)\Var\left(\hat{\xi}_{12}^2\right)+p(p-1)\Cov(\hat{\xi}_{12}^2,\hat{\xi}_{21}^2)=p(p-1)\times\\
	 &&\dfrac{(224 n^8- 1792 n^7+ 15803 n^6- 137437 n^5 + 599321 n^4 - 1080523 n^3 + 610212 n^2- 493848 n +1233960)}{700 n(1 + n)^4(n-1)^5 }.\end{eqnarray*}
\end{lemma}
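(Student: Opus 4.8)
The plan is to handle the three assertions in turn, isolating the covariance identity as the crux. The mean is immediate from linearity: $\mu_{np}=\sum_{k\neq l}\E(\hat{\xi}_{kl}^2)=p(p-1)u_n$, since each of the $p(p-1)$ ordered summands has common expectation $u_n$ by Lemma \ref{lemma2.1}. For the variance I would expand $\Var(T_{np})=\sum_{k\neq l}\Var(\hat{\xi}_{kl}^2)+\sum_{(k,l)\neq(m,q)}\Cov(\hat{\xi}_{kl}^2,\hat{\xi}_{mq}^2)$ and then use the independence structure recorded just before the lemma: under $H_0$ the coefficients $\hat{\xi}_{kl}$ and $\hat{\xi}_{mq}$ are independent unless the two ordered pairs are reverses of one another (equivalently, coincide as unordered sets). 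Hence for each $(k,l)$ the only surviving cross term is the one indexed by $(l,k)$. Since these rank statistics are distribution-free under $H_0$, every diagonal variance equals $\Var(\hat{\xi}_{12}^2)=v_n^2$ and every surviving covariance equals $\Cov(\hat{\xi}_{12}^2,\hat{\xi}_{21}^2)$; counting $p(p-1)$ terms of each type gives $\sigma_{np}^2=p(p-1)\big(\Var(\hat{\xi}_{12}^2)+\Cov(\hat{\xi}_{12}^2,\hat{\xi}_{21}^2)\big)$, and the displayed rational formula follows by combining $v_n^2$ and the covariance over a common denominator.

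Everything then reduces to evaluating $\Cov(\hat{\xi}_{kl}^2,\hat{\xi}_{lk}^2)=\E(\hat{\xi}_{kl}^2\hat{\xi}_{lk}^2)-u_n^2$. The key reduction is that, under $H_0$ with continuous margins, the concomitant ranks of $X_l$ ordered by $X_k$ form a uniformly random permutation $\pi$ of $\{1,\dots,n\}$, and the concomitant ranks of $X_k$ ordered by $X_l$ are exactly the inverse permutation $\pi^{-1}$. Setting $A=\sum_{i=1}^{n-1}|\pi(i+1)-\pi(i)|$ and $B=\sum_{j=1}^{n-1}|\pi^{-1}(j+1)-\pi^{-1}(j)|$, we have $\hat{\xi}_{kl}=1-\tfrac{3A}{n^2-1}$ and $\hat{\xi}_{lk}=1-\tfrac{3B}{n^2-1}$, so expanding the two squares rewrites $\E(\hat{\xi}_{kl}^2\hat{\xi}_{lk}^2)$ as a fixed linear combination of the joint moments $\E(A^aB^b)$, $0\le a,b\le 2$. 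The marginal moments $\E(A),\E(A^2)$ (and by symmetry $\E(B),\E(B^2)$) are supplied by the same single-coefficient computation behind Lemma \ref{lemma2.1}; the genuinely new quantities are the coupled moments $\E(AB)$, $\E(A^2B)=\E(AB^2)$, and $\E(A^2B^2)$.

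The main obstacle is the evaluation of these coupled moments. Here $A$ aggregates absolute value-differences across adjacent positions of $\pi$ while $B$ aggregates absolute position-differences across adjacent values, so the two are dual functionals of the same random permutation and do not factor. I would compute them by direct enumeration over the symmetric group: expand each square into a double sum over consecutive index pairs and, for $\E(A^2B^2)$, classify the resulting quadruples of ``edges'' (two from $\pi$, two from $\pi^{-1}$) according to how their endpoints coincide or abut, evaluating in each configuration the joint law of the relevant entries of a uniform random permutation. Each configuration reduces, via the fact that any fixed set of distinct coordinates of $\pi$ is uniform over injective tuples, to elementary sums of $|a-b|$-type terms over sampling without replacement; assembling the cases and simplifying yields the stated rational expression for $\Cov(\hat{\xi}_{kl}^2,\hat{\xi}_{lk}^2)$. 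This enumeration is the laborious ``brute force'' step flagged in the introduction, and the bookkeeping for the $A^2B^2$ term---where position-edges and value-edges interlock---is where essentially all the difficulty lies.
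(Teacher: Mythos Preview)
Your plan is correct and mirrors the paper's argument closely. The reductions for $\mu_{np}$ and for $\sigma_{np}^2$ (via the observation that under $H_0$ the only nonvanishing cross-covariance is between $\hat{\xi}_{kl}^2$ and $\hat{\xi}_{lk}^2$) are exactly what the paper uses, and your identification of the two concomitant rank sequences with a uniform random permutation $\pi$ and its inverse $\pi^{-1}$ is precisely the structure underlying the paper's computation, although the paper does not state it in those words. The paper then computes the coupled moments $\E(AB)$, $\E(AB^2)=\E(A^2B)$, and $\E(A^2B^2)$ by the same brute-force expansion you outline: it splits $\E\big[(\sum_i A_i)^2(\sum_j B_j)^2\big]$ into six cases according to adjacency/equality of the index pairs, and within the generic case conditions on how many of $(R_i,R_{i+1},R_j,R_{j+1})$ fall in the value set $\{k,k+1,l,l+1\}$, which is exactly your ``how the position-edges and value-edges interlock'' classification. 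The one cosmetic difference is that the paper imports $\Cov(\hat{\xi}_{kl},\hat{\xi}_{lk})$ from an external lemma to obtain $\E(AB)$ rather than computing it from scratch; otherwise the route is the same.
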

\begin{remark} \label{remark1}
   It is imperative to emphasize that the expectations, variances, and covariance specified in the aforementioned two lemmas are accurate.Therefore, they can be reliably utilized for the construction of the test statistic. Later simulations reveal that our empirical test size approximates the significance level closely, particularly in scenarios involving smaller sample sizes and dimensions.
\end{remark}

Although the calculation of the  variance  of $ T_{np} $ exhibits a certain degree of hardship, subtly, after some restructuring, the normalized $ T_{np} $ follows a standard normal distribution  by applying the classical central limit theorem.
\begin{theorem}  \label{theorem2.1}
Under $ H_0 $, no matter sample size $ n $ is fixed or tends to infinity, as long as $ p\xrightarrow{}\infty$, standardized $ T_{np} $ follows a standard normal distribution, that is,
	$$J_\xi=\dfrac{T_{np}-\mu_{np}}{\sigma_{np}}=\sigma_{np}^{-1}\sum_{k\neq l}^{p} (\hat{\xi}_{kl}^2-u_n) \xrightarrow{d}N(0,1),$$
where $ \xrightarrow{d} $ denotes convergence in distribution.
\end{theorem}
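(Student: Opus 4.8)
The plan is to use the fact that, under $H_0$, the whole statistic is a function of the column rank vectors $R^{(1)},\dots,R^{(p)}$, which are i.i.d. and uniform over the symmetric group $S_n$, so that $J_\xi$ is essentially a symmetric degree-two sum in these $p$ independent objects and the limit is driven by $p\to\infty$ acting as the effective sample size. Concretely, writing $R^{(k)}$ for the rank vector of the $k$-th coordinate sample and $\rho_{kl}=R^{(l)}\big(R^{(k)}\big)^{-1}$ for the relative ranking of columns $k$ and $l$, each coefficient is a fixed deterministic function of this single permutation, $\hat{\xi}_{kl}=f(\rho_{kl})$ with $\hat{\xi}_{lk}=f(\rho_{kl}^{-1})$. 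Pairing the two orientations of each unordered index pair, I set $W_{kl}:=\hat{\xi}_{kl}^2+\hat{\xi}_{lk}^2-2u_n$ for $k<l$, so that
\[
  T_{np}-\mu_{np}=\sum_{1\le k<l\le p}W_{kl},
\]
a sum of $\binom{p}{2}$ blocks that are mean-zero (Lemma \ref{lemma2.1}), identically distributed by exchangeability of the columns, uniformly bounded since $|\hat{\xi}_{kl}|\le 1$, and whose total variance is exactly $\sigma_{np}^2$ (Lemma \ref{lemma2.2}).

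Next I would pin down the dependence structure and show it is purely local. If $\{k,l\}\cap\{k',l'\}=\varnothing$ the blocks involve disjoint columns and are independent. If the pairs share a single index, say $k$, then conditioning on the shared rank vector $R^{(k)}$ makes $\rho_{kl}$ and $\rho_{kl'}$ conditionally independent and uniform, with conditional law not depending on $R^{(k)}$; hence $W_{kl}$ and $W_{kl'}$ are pairwise independent and, crucially,
\[
  \E\!\left[W_{kl}\mid R^{(k)}\right]=0 ,
\]
because $\rho_{kl}$ remains uniform given $R^{(k)}$. This degeneracy of the first-order projection is precisely the structural reason the total variance is only of order $p^2$ in $p$ (matching $\sigma_{np}^2\asymp p^2$, all cross-covariances except the reversed pair vanishing), and it also rules out a naive Hájek-projection argument.

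For the limit law itself I would use a martingale decomposition along the columns rather than appealing to independence directly. Ordering the columns and letting $\mathcal F_m=\sigma\big(R^{(1)},\dots,R^{(m)}\big)$, group the blocks revealed at stage $m$ into $D_m:=\sum_{k<m}W_{km}$, so that $T_{np}-\mu_{np}=\sum_{m=2}^{p}D_m$. The projection identity above gives $\E[D_m\mid\mathcal F_{m-1}]=\sum_{k<m}\E[W_{km}\mid R^{(k)}]=0$, so $\{D_m,\mathcal F_m\}$ is a martingale difference sequence, and asymptotic normality of $J_\xi=\sigma_{np}^{-1}\sum_m D_m$ follows from the martingale central limit theorem once one verifies the conditional Lindeberg condition (immediate from uniform boundedness together with $\sigma_{np}\to\infty$) and the convergence of the normalized conditional variance $\sigma_{np}^{-2}\sum_{m=2}^{p}\E[D_m^2\mid\mathcal F_{m-1}]\xrightarrow{p}1$. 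Equivalently, since the dependency graph of the $W_{kl}$ is the line graph of $K_p$, one may instead invoke a Stein/local-dependence central limit theorem.

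The delicate step, which I expect to be the main obstacle, is the convergence of the conditional variance. Although distinct blocks sharing the new column $m$ are pairwise uncorrelated, $\E[D_m^2\mid\mathcal F_{m-1}]=\sum_{k,k'<m}\E[W_{km}W_{k'm}\mid R^{(k)},R^{(k')}]$ contains $O(m^2)$ cross terms in which two blocks share column $m$, and one must show that these concentrate around their mean rather than leaving a residual random limit. This is exactly where the exact fourth-moment and covariance evaluations of Lemmas \ref{lemma2.1}–\ref{lemma2.2} must be deployed to bound the size of the cross terms, and it is also the point at which the regime for $n$ enters: the spreading of the underlying kernel's spectrum as the permutation space grows is what pushes the limit toward the Gaussian rather than a weighted combination of $\chi^2$ variables. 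Establishing this concentration uniformly across the admissible $(n,p)$ regime is the crux of the proof.
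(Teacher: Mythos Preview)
Your symmetrized blocks $W_{kl}=\hat\xi_{kl}^2+\hat\xi_{lk}^2-2u_n$ are exactly the paper's $\varphi_{kl}$, so the opening decomposition coincides. From that point, however, the paper's argument is far shorter than yours: it simply asserts that the family $\{\varphi_{kl}:1\le k<l\le p\}$ is i.i.d.\ and invokes the classical Lindeberg--L\'evy central limit theorem in one line, with no martingale filtration, no dependency-graph analysis, and no discussion of conditional variances.

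Your more elaborate route is in fact the more careful one. As you note, $\varphi_{kl}=g(\rho_{kl})$ with $\rho_{kl}=R^{(l)}\bigl(R^{(k)}\bigr)^{-1}$, and the algebraic identity $\rho_{k'l}=\rho_{kl}\,\rho_{kk'}^{-1}$ shows that blocks sharing a common column index are only \emph{pairwise} independent, not mutually so; the paper's i.i.d.\ claim is therefore stronger than what its one-sentence justification establishes. Your martingale decomposition along columns, driven by the degeneracy $\E[W_{kl}\mid R^{(k)}]=0$, is exactly the extra structure one needs once mutual independence is dropped. The price is the conditional-variance step you correctly flag as the crux: for fixed $n$ the normalized sum is a degenerate $U$-statistic in the i.i.d.\ rank vectors $R^{(1)},\dots,R^{(p)}$, whose $p\to\infty$ limit is a priori a weighted $\chi^2$ combination governed by the kernel's spectrum rather than a Gaussian, so this concentration is precisely where the argument either succeeds or fails. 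The paper's proof sidesteps this entirely via the i.i.d.\ assertion, whereas your proposal confronts it but leaves it open.
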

\begin{remark} \label{remark2}
	The conditions stated in Theorem  \ref{theorem2.1} are exceptionally lenient for testing the independence of high-dimensional data. With the specified significance level, we can directly derive the critical value for the quadratic test.
\end{remark}

By integrating Theorem \ref{theorem2.1} with the following theorem, the consistency of the proposed test method can be achieved under certain conditions.
\begin{theorem} \label{theorem2.2} For all $ 1\leqslant k \neq l\leqslant p $, define the set of indexes $ (k,l) $ of all nonzero elements in $ \boldsymbol{\xi}_p\in \boldsymbol{\Xi}_a $ as $ \mathcal{M}.$ Suppose that there is a constant $c_0>0$ such that for all  $(k,l)\in\mathcal{M}$,  $ \xi_{kl}>c_0 .$  Let $ M $ be the cardinality of set $ \mathcal{M} $. Under $ H_{a} $, as $ n,p\xrightarrow{}\infty $ and $\frac{nM}{p}\xrightarrow{}\infty $, 
$$ \P(J_\xi>z_q)\xrightarrow{}1,$$
where $ z_q$  is the upper $ q $th quantile of the standard normal distribution for significance level $ q \in(0,1)$.
\end{theorem}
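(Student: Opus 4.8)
The goal is to prove that $J_\xi\xrightarrow{p}+\infty$ under $H_a$, since this forces $\P(J_\xi>z_q)\to1$ for the fixed threshold $z_q$. The plan is to split the centred statistic according to the signal set $\mathcal{M}$,
$$T_{np}-\mu_{np}=\underbrace{\sum_{(k,l)\in\mathcal{M}}\left(\hat{\xi}_{kl}^2-u_n\right)}_{A}+\underbrace{\sum_{(k,l)\notin\mathcal{M}}\left(\hat{\xi}_{kl}^2-u_n\right)}_{B},$$
with $A$ the signal sum and $B$ the noise sum over the complement of $\mathcal{M}$. For a noise pair $(k,l)\notin\mathcal{M}$ one has $\xi_{kl}=0$, equivalently $X_k$ and $X_l$ are independent by property (a), so the sample $\{(X_{ik},X_{il})\}_{i=1}^n$ has the same law as under $H_0$; combined with the distribution-free property (c) this gives $\E_{H_a}(\hat{\xi}_{kl}^2)=u_n$ and hence $\E_{H_a}(B)=0$, so that $\E_{H_a}(T_{np})-\mu_{np}=\E_{H_a}(A)$.

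First I would bound $\E_{H_a}(A)$ from below. By the consistency of Chatterjee's coefficient (property (a), \cite{chatterjee2021new}) and the limit theory of \cite{lin2022limit}, for a signal pair $\hat{\xi}_{kl}\xrightarrow{p}\xi_{kl}$ with $\Var_{H_a}(\hat{\xi}_{kl})=O(1/n)$; since $\hat{\xi}_{kl}^2$ is bounded this yields $\E_{H_a}(\hat{\xi}_{kl}^2)=\xi_{kl}^2+O(1/n)$ and $\Var_{H_a}(\hat{\xi}_{kl}^2)=O(1/n)$. As $\xi_{kl}>c_0$ and $u_n=O(1/n)$, for all large $n$ we get $\E_{H_a}(\hat{\xi}_{kl}^2)-u_n\geq c_0^2/2$ for every $(k,l)\in\mathcal{M}$, so $\E_{H_a}(A)\geq Mc_0^2/2$. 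Reading off $\sigma_{np}\asymp p/n$ from Lemma \ref{lemma2.2}, this gives $\E_{H_a}(J_\xi)=\E_{H_a}(A)/\sigma_{np}\gtrsim \tfrac{1}{2}c_0^2\,(nM/p)\to\infty$ under the assumption $nM/p\to\infty$.

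Next I would bound $\Var_{H_a}(J_\xi)=\sigma_{np}^{-2}\Var_{H_a}(T_{np})$ and show it is negligible against $\E_{H_a}(J_\xi)^2\asymp (nM/p)^2$. The $O(1/n)$ variance of the signal terms gives $\Var_{H_a}(A)=O(M^2/n)$; treating the noise sum as governed by the $H_0$ dependence pattern gives $\Var_{H_a}(B)=O(\sigma_{np}^2)=O(p^2/n^2)$; and Cauchy--Schwarz controls the cross term by $O(Mp/n^{3/2})$. Dividing by $\sigma_{np}^2\asymp p^2/n^2$ yields $\Var_{H_a}(J_\xi)=O(nM^2/p^2)+O(1)+O(Mn^{1/2}/p)$, each of whose three terms is $o\big((nM/p)^2\big)$ exactly because $nM/p\to\infty$ and $n\to\infty$. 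A single Chebyshev bound then closes the argument: for $n,p$ large enough that $\E_{H_a}(J_\xi)>z_q$,
$$\P(J_\xi\leq z_q)\leq\P\big(|J_\xi-\E_{H_a}(J_\xi)|\geq \E_{H_a}(J_\xi)-z_q\big)\leq\frac{\Var_{H_a}(J_\xi)}{\big(\E_{H_a}(J_\xi)-z_q\big)^2}\to0.$$

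The main obstacle is the variance bound under $H_a$, i.e. justifying $\Var_{H_a}(B)=O(\sigma_{np}^2)$ together with the control of $\Cov(A,B)$. The clean vanishing of most pairwise covariances available under $H_0$ is no longer automatic: two noise pairs may be correlated through variables that are themselves mutually dependent, and signal pairs may interact with noise pairs sharing an index. The work is to show that such non-vanishing ``interaction'' covariances are few enough and small enough---using the boundedness of $\hat{\xi}_{kl}^2$ and the reduction of off-$\mathcal{M}$ behaviour to the $H_0$ computation of Lemma \ref{lemma2.2}---so that $\Var_{H_a}(T_{np})$ stays of the order required above. A secondary but genuine point is that $\E_{H_a}(\hat{\xi}_{kl}^2)\to\xi_{kl}^2$ must hold uniformly over the growing set $\mathcal{M}$, which calls for uniform rather than pointwise moment estimates from the underlying limit theory.
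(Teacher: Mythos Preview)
Your plan uses the same decomposition as the paper: $J_\xi=J_M+J_M'$ with $J_M=\sigma_{np}^{-1}\sum_{(k,l)\in\mathcal{M}}(\hat{\xi}_{kl}^2-u_n)$ and $J_M'$ the corresponding sum over $\mathcal{M}^c$. The paper's execution differs from yours in two small ways that save work. First, rather than a global Chebyshev bound on $J_\xi$, it shows $J_M\xrightarrow{p}\infty$ and $J_M'=O_p(1)$ separately, so the cross term $\Cov(A,B)$ never enters. Second, for the signal part it writes $J_M=T_1+T_2$ with the deterministic piece $T_2=\sigma_{np}^{-1}\sum_{\mathcal{M}}(\xi_{kl}^2-u_n)\geq M\sigma_{np}^{-1}(c_0^2-u_n)\asymp nM/p$, and controls the random piece $T_1=\sigma_{np}^{-1}\sum_{\mathcal{M}}(\hat{\xi}_{kl}^2-\xi_{kl}^2)$ by a \emph{first-moment} Markov inequality together with $\E|\hat{\xi}_{kl}-\xi_{kl}|\leq\{\E(\hat{\xi}_{kl}-\xi_{kl})^2\}^{1/2}=O(n^{-1/2})$ from Propositions~1.1 and~1.2 of \cite{lin2022limit}, yielding $T_1=O_p(\sqrt{n}\,M\log n/p)=o_p(T_2)$. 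This bypasses any second-moment computation on the signal terms, and the uniformity issue you raise reduces to the uniform $O(n^{-1})$ mean-square bound already supplied by those propositions.

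As for your ``main obstacle'' --- that under $H_a$ two noise pairs $(k,l),(k,m)\in\mathcal{M}^c$ can be correlated through a dependence between $X_l$ and $X_m$, so $\Var_{H_a}(J_M')$ is not automatically its $H_0$ value --- the paper does not engage with it. It simply asserts ``Similar to Lemma 2.1 and Lemma 2.2, $\E(J_M')=0$ and $\Var(J_M')=O\big((p(p-1)-M)/p^2\big)$'' and concludes $J_M'=O_p(1)$. So your diagnosis is accurate, but you will not find the missing argument in the paper's proof either; if you want to close this step rigorously you must go beyond what the paper provides.
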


\begin{remark} \label{remark3}
	Note that $ M  $ can be treated as the number of pairs with Chatterjee coefficient of non-zero in all $ p (p-1) $ pairs of variables. In fact, from the perspective of Theorem \ref{theorem2.2}, in the setting of high dimensions, where $ n/p $ tends to 0 as $ n,p\xrightarrow{}\infty $, achieving high power requires that $M$ increases at a higher rate than $ p/n $, hence the alternative set formed in this way presents a certain dense pattern. 
\end{remark}

The ensuing corollary demonstrates that if the signal is excessively weak, the proposed test statistic $J_\xi$ becomes invalid.

\begin{corollary}  \label{corollary2.1} For all $ 1\leqslant k \neq l\leqslant p $, define the set of indexes $ (k,l) $ of all nonzero elements in $ \boldsymbol{\xi}_p\in \boldsymbol{\Xi}_a $ as $ \mathcal{M}.$ Suppose that there is a constant $c_0>0$ such that for all  $(k,l)\in\mathcal{M}$,  $ \xi_{kl}>c_0 .$  Let $ M $ be the cardinality of set $ \mathcal{M} $. Under $ H_{a} $, if $ M $ is excessively small, such as $M=o(p/n) $ or $ M $ is fixed as $n,p\xrightarrow{}\infty $, it has 	
$$\lim _{n,p\xrightarrow{}\infty}\P(J_\xi>z_q)\leqslant q.$$
\end{corollary}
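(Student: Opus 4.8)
The plan is to exploit the fact that $J_\xi$ is centered and scaled by the \emph{null} moments $\mu_{np}$ and $\sigma_{np}$, and to show that when $M$ is small the alternative signal is too weak to perturb the null limiting law established in Theorem~\ref{theorem2.1}. First I would split the sum according to whether a pair carries signal:
$$T_{np}-\mu_{np}=\underbrace{\sum_{(k,l)\notin\mathcal{M}}(\hat{\xi}_{kl}^2-u_n)}_{=:\,S_0}+\underbrace{\sum_{(k,l)\in\mathcal{M}}(\hat{\xi}_{kl}^2-u_n)}_{=:\,S_1}.$$
The block $S_0$ collects the $p(p-1)-M$ pairs that are genuinely independent (since $\xi_{kl}=0$ is equivalent to independence of $X_k$ and $X_l$), while $S_1$ collects the $M$ dependent pairs. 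Note that on $\mathcal{M}$ the true mean of $\hat{\xi}_{kl}^2$ exceeds $u_n$, so $S_1$ carries a positive bias; the whole point is that this bias is asymptotically invisible at the null scale.

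The core step is to show $S_1/\sigma_{np}=o_p(1)$. Since the ranks in \eqref{xidefinition} satisfy $R_{[i]l}^k\in\{1,\dots,n\}$, the defining sum is bounded and hence $\hat{\xi}_{kl}^2\leqslant C$ for a universal constant $C$; therefore $|S_1|\leqslant (C+u_n)M=O(M)$ \emph{deterministically}, which simultaneously controls the random fluctuation and the positive drift. From Lemma~\ref{lemma2.2} one reads off $\sigma_{np}^2\asymp p^2/n^2$ (the leading term of $v_n^2$ is of order $n^{-2}$, the paired covariance is of order $n^{-4}$, and there are $p(p-1)$ summands), so $\sigma_{np}\asymp p/n$ and
$$\frac{|S_1|}{\sigma_{np}}=O\!\left(\frac{Mn}{p}\right).$$
Under the hypotheses of the corollary, $M=o(p/n)$ (a fixed $M$ being the extreme case), so $Mn/p\to0$ and the signal block vanishes.

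Next I would argue that the null block still obeys the central limit theorem. The pairs indexed outside $\mathcal{M}$ are mutually independent in exactly the manner used in Theorem~\ref{theorem2.1}, and deleting only $M=o(p^2)$ pairs changes the variance by a factor $1+o(1)$, so $\Var(S_0)=\sigma_{np}^2(1+o(1))$ and the same argument gives $S_0/\sigma_{np}\xrightarrow{d}N(0,1)$. Combining the two blocks by Slutsky's theorem yields
$$J_\xi=\frac{S_0}{\sigma_{np}}+\frac{S_1}{\sigma_{np}}\xrightarrow{d}N(0,1),$$
whence $\P(J_\xi>z_q)\to q$, and in particular $\lim_{n,p\to\infty}\P(J_\xi>z_q)\leqslant q$.

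The main obstacle I anticipate is the third step: verifying that after deleting the $M$ dependent pairs the remaining array of squared coefficients still satisfies the moment and independence conditions of Theorem~\ref{theorem2.1} uniformly, and in particular that the paired covariance structure between $\hat{\xi}_{kl}^2$ and $\hat{\xi}_{lk}^2$ is preserved up to $o(1)$ relative error. This is precisely where the negligibility $M/p^2\to0$ must be invoked carefully; everything else reduces to the boundedness of $\hat{\xi}_{kl}^2$ and the order-of-magnitude bookkeeping $\sigma_{np}\asymp p/n$.
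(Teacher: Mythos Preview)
Your proposal follows essentially the same route as the paper: split $J_\xi$ into the signal block over $\mathcal{M}$ and the null block over $\mathcal{M}^c$, bound the signal block deterministically via the boundedness of $\hat{\xi}_{kl}^2$ to get $S_1/\sigma_{np}=O(Mn/p)\to 0$, invoke the CLT of Theorem~\ref{theorem2.1} for the null block (whose variance differs from $\sigma_{np}^2$ by a factor $1+o(1)$), and conclude via Slutsky. The paper normalises the null block by its own standard deviation $\sigma_M$ and then shows $\sigma_{np}/\sigma_M\to 1$ before appealing to a Slutsky-type result, but this is only a cosmetic rearrangement of your argument; the obstacle you flag about the CLT for the reduced null block is handled identically (and equally informally) in the paper.
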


\section{Extreme value test} \label{section3}
When there are too few non-zero elements in $ \boldsymbol{\xi}_p $, i.e.,  $ \boldsymbol{\xi}_p $ belongs to sparse alternative hypotheses set, quadratic statistics will exhibit difficulties in high-dimensional independence test, typically encountering low power problems. The main reason is that under $ H_0 $, a large amount of estimation errors are accumulated in the quadratic statistics for high-dimensional data, which leads to excessively large critical values dominating the signal under  sparse alternative hypotheses.

A considerable number of scholars have formulated extreme value statistics to address the aforementioned challenges. In our approach, we explore an alternative form of extreme value statistic by substituting the Chatterjee correlation coefficient for the general correlation coefficient as
$$ L_{n p}=\max _{1 \leqslant k\neq l \leqslant p}\left|\hat{\xi}_{k l}\right|.$$

Theorem \ref{theorem3.1} provides that adjusted $ L_{n p}$ converges weakly to a Gumbel distribution  under null hypothesis. 
\begin{theorem} \label{theorem3.1}
	 Under $ H_{0} $, for any  $ y \in \mathbb{R} $, as  $ n,p \rightarrow \infty $, $ \operatorname{P}(  L_{n p}^{2} / u_{n}-c_p \leqslant y ) \rightarrow\exp \left(-e^{-y / 2} / \sqrt{8 \pi}\right) $.
	where $ c_p=4 \log ( \sqrt{2}p) -\log \log ( \sqrt{2}p) $ and $u_n$ is presented in Lemma \ref{lemma2.1}.
\end{theorem}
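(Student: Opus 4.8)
The plan is to treat $L_{np}^2/u_n$ as the maximum of the $p(p-1)$ nearly-$\chi^2_1$ variables $\hat\xi_{kl}^2/u_n$ and to prove that the number of ``exceedances'' above the level $c_p+y$ is asymptotically Poisson. Fix $y$, set $t_p:=c_p+y$, define the events $A_{kl}:=\{\hat\xi_{kl}^2/u_n>t_p\}$ for $1\le k\neq l\le p$, and let $W:=\sum_{k\neq l}\I(A_{kl})$, so that $\P\!\left(L_{np}^2/u_n-c_p\le y\right)=\P(W=0)$. The decisive structural input is the dependence pattern recorded after Lemma \ref{lemma2.2}: under $H_0$ the $\hat\xi_{kl}$ are distribution-free and, apart from its transpose $\hat\xi_{lk}$, each $\hat\xi_{kl}$ is independent of the others. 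Consequently the $\binom{p}{2}$ blocks $\{\hat\xi_{kl},\hat\xi_{lk}\}$ are mutually independent and I may write
$$\P(W=0)=\prod_{\{k,l\}}\Bigl[1-\P(A_{kl})-\P(A_{lk})+\P(A_{kl}\cap A_{lk})\Bigr],$$
reducing the theorem to a marginal tail estimate, a joint (transpose) tail estimate, and a routine remainder. A Chen--Stein Poisson approximation with dependency neighbourhood $\{(k,l),(l,k)\}$ delivers the same conclusion and is the natural fallback.

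The core of the proof is the sharp marginal tail. From Lemma \ref{lemma2.1} one has $n u_n\to 2/5$, and under $H_0$ Chatterjee's central limit theorem (\cite{chatterjee2021new}) gives $\sqrt n\,\hat\xi_{kl}\xrightarrow{d}N(0,2/5)$; hence $\hat\xi_{kl}^2/u_n$ is asymptotically $\chi^2_1$. Because $t_p\asymp 4\log p\to\infty$, I need this Gaussian approximation to survive in the moderate-deviation range $\sqrt{t_p}\asymp 2\sqrt{\log p}$, i.e.\ a Cram\'er-type estimate
$$\P\!\left(\hat\xi_{kl}^2/u_n>t\right)=(1+o(1))\,2\bigl(1-\Phi(\sqrt t)\bigr)$$
uniformly for $t$ in a neighbourhood of $c_p$. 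Inserting the tail $1-\Phi(x)\sim\phi(x)/x$ together with $c_p=4\log(\sqrt2 p)-\log\log(\sqrt2 p)$, so that $e^{-c_p/2}=\sqrt{\log(\sqrt2 p)}/(2p^2)$, a direct computation shows the polynomial and exponential corrections combine to the single constant $1/\sqrt{8\pi}$:
$$p(p-1)\,\P(A_{kl})\longrightarrow \frac{1}{\sqrt{8\pi}}\,e^{-y/2}=:\lambda(y).$$

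It then remains to discard the cross terms. The second-order remainder obeys $\sum_{\{k,l\}}\bigl(\P(A_{kl})+\P(A_{lk})\bigr)^2=O\!\left(p^2\cdot p^{-4}\right)\to0$, so only the within-block term survives, and I must show $\binom{p}{2}\,\P(A_{kl}\cap A_{lk})\to0$, i.e.\ $p^2\,\P(A_{kl}\cap A_{lk})\to0$. Here I would argue that $\hat\xi_{kl}$ and $\hat\xi_{lk}$ are imperfectly correlated at the extreme level---consistent with the covariance of the squared coefficients computed in Lemma \ref{lemma2.2}---so that the probability of a \emph{simultaneous} exceedance of the high level $u_n t_p$ decays strictly faster than $\P(A_{kl})^2$, rendering the sum negligible. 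Taking logarithms in the product formula and using $\log(1-x)=-x+O(x^2)$ then yields $\P(W=0)\to\exp(-\lambda(y))=\exp\bigl(-e^{-y/2}/\sqrt{8\pi}\bigr)$, which is the assertion.

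The principal obstacle is the uniform moderate-deviation tail: $\hat\xi_{kl}$ is a nonlinear functional of the spacings of the concomitant ranks rather than a sum of independent summands, so its Gaussian tail at scale $\sqrt{\log p}$ cannot be quoted from a classical Cram\'er theorem and must be established directly from the distribution-free law of the relative ranks under $H_0$; this step is expected to impose an implicit ceiling on how fast $p$ may grow relative to $n$. The subsidiary difficulty is the transpose joint-tail bound, which needs a quantitative handle on the dependence between $\hat\xi_{kl}$ and $\hat\xi_{lk}$ at the extreme level, going beyond the exact finite-$n$ covariance supplied by Lemma \ref{lemma2.2}.
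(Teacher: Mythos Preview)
Your plan is close to the paper's own Chen--Stein argument, but there is one genuine structural error. The dependence statements after Lemma~\ref{lemma2.2} are only \emph{pairwise}: they say $\hat\xi_{kl}$ is independent of $\hat\xi_{rs}$ for each single $(r,s)\neq(l,k)$. You promoted this to \emph{mutual} independence of the $\binom{p}{2}$ blocks $\{\hat\xi_{kl},\hat\xi_{lk}\}$, and that fails already for three coordinates. Under $H_0$, $\hat\xi_{kl}$ is a function of the relative-rank permutation $\tau_{kl}:=\sigma_l^{-1}\sigma_k$ (with $\sigma_k$ the permutation sorting the $k$-th column), and the cocycle identity $\tau_{12}=\tau_{23}^{-1}\tau_{13}$ shows that $\tau_{12}$ is a deterministic function of $(\tau_{13},\tau_{23})$. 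Hence your exact product formula for $\P(W=0)$ is unjustified, and your fallback---Chen--Stein with the minimal neighbourhood $\{(k,l),(l,k)\}$---does not give $b_3=0$ either, because $A_{kl}$ is not independent of the $\sigma$-field generated by the remaining events. The paper's remedy is to take the enlarged neighbourhood $B_\alpha=\{(r,s):\{r,s\}\cap\{k,l\}\neq\emptyset\}$; then $A_\alpha$ depends only on $(\sigma_k,\sigma_l)$ while the events outside $B_\alpha$ depend only on $\{\sigma_r:r\neq k,l\}$, so $b_3=0$ exactly. The price is $O(p^3)$ terms in $b_1,b_2$, each of order $\P(A_{12})^2\asymp p^{-4}$, which is harmless.

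On your two flagged obstacles. For the marginal tail, the paper does not prove a Cram\'er-type moderate deviation; instead it establishes (Lemma~\ref{lemmaA.1}, via a martingale Berry--Esseen theorem applied to an asymptotically equivalent representation of $\hat\xi_{kl}$) the uniform bound $\sup_x|\P(\hat\xi_{kl}/\sqrt{u_n}\le x)-\Phi(x)|\le C L_n^{1/5}$ with $L_n\asymp n^{-1}$, and then uses it directly to write $\P(A_{12})=2(1-\Phi(\sqrt{t_p}))(1+o(1))$. You are right that this is only a relative-error statement when $n^{-1/5}=o(p^{-2})$, so an implicit growth ceiling on $p$ versus $n$ is hidden in the paper's argument and not made explicit. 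For the transpose joint tail: in the enlarged-neighbourhood setup this term lives inside $b_2$, and the paper bounds $b_2$ by $\sum_\alpha\sum_{\beta\in B_\alpha\setminus\{\alpha\}}\P(A_\alpha)\P(A_\beta)$, i.e.\ it treats every pair as independent---valid for all $\beta\in B_\alpha$ \emph{except} the single transpose $(l,k)$. So the paper glosses over precisely the issue you identified; the covariance recorded in the proof of Lemma~\ref{lemma2.2} gives $\mathrm{Corr}(\hat\xi_{kl},\hat\xi_{lk})=O(n^{-1})\to 0$, which is the right quantitative input for closing that gap rigorously.
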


The following theorem indicates that the test based on $ L_{np} $ is consistent under the alternatives $ H_a $.
\begin{theorem} \label{theorem3.2}
	 For all $ 1\leqslant k \neq l\leqslant p $, define the set of indexes $ (k,l) $ of all nonzero elements in $ \boldsymbol{\xi}_p\in \boldsymbol{\Xi}_a $ as $ \mathcal{M}.$ Suppose that there is a constant $c_0>0$ such that for all  $(k,l)\in\mathcal{M}$,  $ \xi_{kl}>c_0 .$ Under $ H_{a} $, as $ n,p\xrightarrow{}\infty $ and $\frac{\log p}{n}\rightarrow0 $, $ L_{n p} $ has high power, that is,
	
	$$\P( L_{n p}^{2} / u_{n}-c_p>z'_q)\xrightarrow{}1,$$
	where  $ z'_q $ is the upper $ q $th quantile of the Gumbel distribution for  significance level $ q \in(0,1)$.
\end{theorem}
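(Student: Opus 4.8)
The plan is to show that under $H_a$ the test statistic $L_{np}^2/u_n - c_p$ diverges to $+\infty$ in probability, so that it eventually exceeds any fixed Gumbel quantile $z'_q$. The key observation is that $L_{np} = \max_{k\neq l}|\hat\xi_{kl}| \geq |\hat\xi_{k_0 l_0}|$ for any single pair $(k_0,l_0)\in\mathcal{M}$, so it suffices to control one coefficient at a nonzero signal location and compare it with the null centering and scaling $c_p, u_n$.

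First I would fix a pair $(k_0,l_0)\in\mathcal{M}$, for which $\xi_{k_0 l_0}>c_0>0$ by hypothesis. Invoking the consistency and limit theory of the Chatterjee coefficient under dependence (merit (b) in the introduction, citing \cite{lin2022limit}), I would argue that $\hat\xi_{k_0 l_0}\xrightarrow{P}\xi_{k_0 l_0}$, with fluctuations of order $n^{-1/2}$; more precisely, there is a concentration bound of the form $\P(|\hat\xi_{k_0 l_0}-\xi_{k_0 l_0}|>t)\leq 2\exp(-cnt^2)$ for the appropriate constant $c$. Taking $t=c_0/2$ gives $\hat\xi_{k_0 l_0}> c_0/2$ with probability at least $1-2\exp(-cnc_0^2/4)\to 1$. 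Consequently $L_{np}\geq c_0/2$ and hence $L_{np}^2 \geq c_0^2/4$ with probability tending to one.

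Next I would quantify the centering and scaling. From Lemma \ref{lemma2.1}, $u_n = \frac{(n-2)(4n-7)}{10(n-1)^2(n+1)}\asymp \tfrac{2}{5n}$, so $u_n^{-1}\asymp \tfrac{5n}{2}$. Therefore on the above high-probability event,
\begin{equation*}
	\frac{L_{np}^2}{u_n}-c_p \;\geq\; \frac{c_0^2/4}{u_n}-c_p \;\asymp\; \frac{5n}{2}\cdot\frac{c_0^2}{4}-\bigl(4\log(\sqrt{2}p)-\log\log(\sqrt{2}p)\bigr).
\end{equation*}
The leading signal term is of order $n$, while $c_p = O(\log p)$. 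Under the stated rate condition $\frac{\log p}{n}\to 0$, the signal term $\asymp n$ dominates $c_p \asymp \log p$, so the right-hand side tends to $+\infty$. Hence $\P\bigl(L_{np}^2/u_n-c_p>z'_q\bigr)\to 1$ for any fixed $z'_q$, which is the claim.

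The main obstacle I anticipate is making the concentration step for $\hat\xi_{k_0 l_0}$ under the alternative rigorous and uniform enough: one must ensure the exponential (or at least polynomial-in-$n$) tail bound holds at a \emph{single} fixed pair with the constant $c$ independent of $n$, which is what lets $\frac{\log p}{n}\to 0$ suffice rather than a more stringent rate. This requires either a sub-Gaussian-type tail for the Chatterjee statistic under dependence or a careful second-moment/Chebyshev argument combined with the fact that we only need one pair (so no union bound over $p^2$ pairs is incurred on the lower-bound side). Everything else—the asymptotics of $u_n$ and $c_p$, and the comparison of growth rates—is routine once that concentration bound is in hand.
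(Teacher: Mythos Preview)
Your proposal is correct and follows exactly the paper's route: lower-bound $L_{np}$ by $|\hat\xi_{k_0l_0}|$ at a single pair $(k_0,l_0)\in\mathcal{M}$, use consistency of the Chatterjee coefficient there, and compare the resulting signal of order $n$ against the centering $c_p=O(\log p)$ under $\log p/n\to 0$. Your stated ``main obstacle'' is in fact a non-issue: since only one fixed pair is used there is no union bound, so ordinary convergence in probability $\hat\xi_{k_0l_0}\xrightarrow{P}\xi_{k_0l_0}>c_0$ (Chatterjee's basic consistency) already suffices, and no sub-Gaussian or exponential tail is required.
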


From Theorem \ref{theorem3.1} and Theorem \ref{theorem3.2}, we can conclude that the test method based on $ L_{n p}$ is consistent under mild conditions.

\section{Power enhancement test}\label{section4}
 Most existing extreme value tests exhibit good performance under sparse alternatives, however, they require either bootstrap or strict conditions to derive the limit null distribution which results in typically slow convergence rates and size distortions.  
 To address the aforementioned issues,  we further invoke  a novel technique first proposed by \cite{fan2015power} for high-dimensional testing problems named the power enhancement test. The power enhancement test does not reduce the testing power under dense alternatives, but  enhances the power under sparse alternatives.
 
 Assuming $ J_\xi $ is the test statistic with the correct asymptotic test size, but encounters low power under sparse alternatives. The test power can be enhanced by adding a screening component $ J_0 \geqslant 0 $, which satisfies the following three power enhancement properties:
 \begin{enumerate}[(a)]
 \item   Nonnegativity: $ J_0 \geqslant 0 $ a.s.. \label{propertya}
 \item No-size-distortion: $ \P(J_0 = 0|H_0)\xrightarrow{}1 $. \label{propertyb}
 \item Power enhancement: $ J_0 $ diverges in probability under some specific sparse alternative regions. \label{propertyc}
\end{enumerate}

Similar to  \cite{fan2015power}, we construct  the power enhancement test  in  the following form,
 \begin{eqnarray*}  
 	J_E = J_0 + J_\xi.
 \end{eqnarray*}
The pivot statistic $ J_\xi $ is a quadratic test statistic presented in Section \ref{section2} with the correct asymptotic size  and has high power against $ H_0 $ in a certain dense alternative region $ \boldsymbol{\Xi}( J_\xi) $. The screening component $ J_0 $ does not serve as a test statistic and is added  just to enhance test power. 

Property (\ref{propertya}) ensures that the test power of $ J_E $, due to the addition of a nonnegative component, will not be lower than that of $ J_\xi $. Property (\ref{propertyb}) indicates that the addition of this nonnegative component tending towards 0 will not affect the limit null distribution, and further will not cause size distortion. This also fully demonstrates that the limit distribution of $ J_E $ under the null hypothesis is asymptotically the same as that of $ J_\xi $, which does not require additional costs to derive the limit null distribution of $ J_E $. Property (\ref{propertyc}) shows that there will be a significant improvement for the test power in certain alternative regions where $ J_0 $ diverges.

Next, we need to establish relevant requirements for our estimator. Choose  a  high-criticism threshold $\delta_{np}$ such that, under both null and  alternative hypotheses, for any  $\boldsymbol{\xi}_p \in \boldsymbol{\Xi}$,  
\begin{eqnarray}\label{4.1}
	\inf _{\boldsymbol{\xi}_p \in \boldsymbol{\Xi}} \P\left(\max _{1\leqslant k\neq l \leqslant p}\left|\hat{\xi}_{kl}-\xi_{kl}\right| / u_n^{1 / 2}<\delta_{np} \mid \boldsymbol{\xi}_p\right) \rightarrow 1,
\end{eqnarray}
where $u_n>0$ is a normalizing constant  and taken as the variance of $\hat{\xi}_{kl}$ from Lemma \ref{lemma2.1} in Section \ref{section2}. The sequence $\delta_{np}$, depending on $(n, p)$, grows slowly as $n, p \rightarrow \infty$ and is chosen to dominate the maximum noise level $\max _{1\leqslant k\neq l \leqslant p}\frac{\left| \hat{\xi}_{kl}\right|}{\sqrt{u_n}}$.

The following lemma indicates that (\ref{4.1}) holds under mild conditions.
\begin{lemma}\label{lemma4.1}
	  Assume  that the joint distribution function $F_{kl}$ $( 1\leqslant k \neq l\leqslant p) $ of any bivariate $ (X_k, X_l) $ from $ \boldsymbol{\xi}_p $ is continuous. If there exist fixed constants $\beta, C, C_1, C_2>0$ such that for any $t \in \mathbb{R}$ and $x, x^{\prime} \in \mathbb{R}$,
$$ 	  \left|\P(X_l \geqslant t \mid X_k=x)-\P\left(X_l \geqslant t \mid X_k=x^{\prime}\right)\right| \leqslant C\left(1+|x|^\beta+\left|x^{\prime}\right|^\beta\right)\left|x-x^{\prime}\right| $$
	 and
$$ \P(|X_k| \geqslant t) \leqslant C_1 e^{-C_2 t}, 1\leqslant k \neq l\leqslant p,  $$	 
	 then, for any $ \boldsymbol{\xi}_p \in \boldsymbol{\Xi} $ with  $ \boldsymbol{\Xi}=\boldsymbol{\Xi}_a\cup\{\boldsymbol{0}\}$,
	$$ \inf _{\boldsymbol{\xi}_p \in \boldsymbol{\Xi}} \P\left(\max _{1\leqslant k\neq l \leqslant p}\left|\hat{\xi}_{kl}-\xi_{kl}\right| / u_{n}^{1 / 2}<\delta_{np} \mid \boldsymbol{\xi}_p\right) \rightarrow 1. $$
\end{lemma}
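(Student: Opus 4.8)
The plan is to reduce the uniform statement to a single-pair concentration bound combined with a union bound over the $p(p-1)$ coordinate pairs, separating the stochastic fluctuation of $\hat{\xi}_{kl}$ about its mean from the deterministic bias $|\E\hat{\xi}_{kl}-\xi_{kl}|$. Since Lemma \ref{lemma2.1} gives $u_n\asymp n^{-1}$, the normalized deviation $|\hat{\xi}_{kl}-\xi_{kl}|/u_n^{1/2}$ is of order $\sqrt{n}\,|\hat{\xi}_{kl}-\xi_{kl}|$, so it suffices to control $\max_{k\neq l}|\hat{\xi}_{kl}-\xi_{kl}|$ on the scale $u_n^{1/2}\delta_{np}$, uniformly over $\boldsymbol{\xi}_p\in\boldsymbol{\Xi}$. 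Writing
$$\frac{|\hat{\xi}_{kl}-\xi_{kl}|}{u_n^{1/2}} \le \frac{|\hat{\xi}_{kl}-\E\hat{\xi}_{kl}|}{u_n^{1/2}} + \frac{|\E\hat{\xi}_{kl}-\xi_{kl}|}{u_n^{1/2}},$$
I would treat the two summands in turn, in each case ensuring that all constants depend only on the structural parameters $\beta, C, C_1, C_2$, so that the resulting bound is automatically uniform over the parameter space and the infimum over $\boldsymbol{\xi}_p$ is harmless.

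For the stochastic term, the first step is to note that $\hat{\xi}_{kl}$ in (\ref{xidefinition}) is a function of the $n$ i.i.d. pairs enjoying the bounded-differences property: because the summands $|R_{[i+1]l}^k-R_{[i]l}^k|$ are differences of ranks, replacing one pair $(X_{jk},X_{jl})$ perturbs the $X_k$-ordering and the $X_l$-ranks only locally and by a bounded amount, so it changes $\hat{\xi}_{kl}$ by at most $O(1/n)$. McDiarmid's inequality then yields $\P(|\hat{\xi}_{kl}-\E\hat{\xi}_{kl}|>\varepsilon)\le 2\exp(-c n\varepsilon^2)$ with an absolute constant $c>0$. Taking $\varepsilon=\tfrac12 u_n^{1/2}\delta_{np}$ and using $nu_n\asymp 1$ turns this into $2\exp(-c'\delta_{np}^2)$, exactly the form needed for the union bound.

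The bias term is where the two hypotheses enter, and it is the main obstacle. I would bound $|\E\hat{\xi}_{kl}-\xi_{kl}|$ by comparing the rank-based increments along consecutive $X_k$-order statistics with the population increments of the conditional survival function $x\mapsto\P(X_l\ge t\mid X_k=x)$, following \cite{chatterjee2021new} and \cite{lin2022limit}. The Lipschitz hypothesis bounds each such increment by $C(1+|X_{(i)k}|^\beta+|X_{(i+1)k}|^\beta)|X_{(i+1)k}-X_{(i)k}|$, while the sub-exponential tail forces $\max_i|X_{(i)k}|=O_p(\log n)$, so the polynomially growing Lipschitz factor contributes at most a $(\log n)^\beta$ factor and the telescoping gaps sum to $O(1)$. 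Carrying this through yields $|\E\hat{\xi}_{kl}-\xi_{kl}|=O(u_n^{1/2})$ up to a slowly varying factor, uniformly over $\boldsymbol{\Xi}$; hence the bias term in the displayed decomposition is of order at most $(\log n)^\beta$, which is absorbed by $\delta_{np}$ provided $\delta_{np}$ grows slightly faster than $(\log n)^\beta$.

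Finally I would assemble the pieces by a union bound. Choosing $\delta_{np}$ to grow slowly, e.g. $\delta_{np}\asymp\sqrt{\log p}$ with a sufficiently large leading constant (and dominating the $(\log n)^\beta$ bias slack), gives
$$\P\Big(\max_{1\le k\neq l\le p}|\hat{\xi}_{kl}-\xi_{kl}|/u_n^{1/2}\ge\delta_{np}\Big) \le p(p-1)\cdot 2\exp(-c'\delta_{np}^2)\to 0.$$
Since $c'$ and the bias bound depend only on $(\beta,C,C_1,C_2)$ and not on the particular $\boldsymbol{\xi}_p$, the estimate is uniform, and taking the infimum over $\boldsymbol{\xi}_p\in\boldsymbol{\Xi}$ completes the argument. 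I expect the only delicate point to be making the bias analysis genuinely uniform over the distribution class, in particular verifying that the implied constants in $\max_i|X_{(i)k}|=O_p(\log n)$ and in the telescoping estimate are controlled solely by the fixed structural constants and not by the underlying bivariate law.
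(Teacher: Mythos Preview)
Your proposal is correct and follows essentially the same route as the paper: union bound over the $p(p-1)$ pairs, the split $|\hat{\xi}_{kl}-\xi_{kl}|\le|\hat{\xi}_{kl}-\E\hat{\xi}_{kl}|+|\E\hat{\xi}_{kl}-\xi_{kl}|$, a McDiarmid-type sub-Gaussian tail for the first piece, and the Lipschitz/sub-exponential hypotheses to control the bias. The only cosmetic differences are that the paper obtains the bounded-differences bound by passing through Chatterjee's auxiliary representation $Q_{kl}^n/S_{kl}^n$ (citing Lemma~A.11 of \cite{chatterjee2021new}) rather than applying McDiarmid directly to $\hat{\xi}_{kl}$, and it quotes Proposition~1.1 of \cite{lin2022limit} for the sharper bias bound $|\E\hat{\xi}_{kl}-\xi_{kl}|\le C(\log n)^{\beta+3}/n$, which after dividing by $u_n^{1/2}$ is in fact $o(1)$ rather than merely $O((\log n)^\beta)$ and so is absorbed by any growing $\delta_{np}$.
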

\begin{remark} 
\begin{enumerate} [(1)]
\item The two requirements stipulated in Lemma \ref{lemma4.1} for the distribution, namely the Lipschitz condition and tail probability, are notably lenient and can be met with considerable ease. Additionally,  variance $ u_n $ is accurate, not  relying on data for estimation, that is, its distribution is free.
\item (Statement on the selection of $\delta_{np}$) 
Throughout the article, the critical value $\delta_{np}$ takes $\delta_{np}=\sqrt{c_p}\log\log n$ with $ c_p=4 \log ( \sqrt{2}p) -\log \log ( \sqrt{2}p) $   presented in Theorem \ref{theorem3.1} in which $\max _{1\leqslant k\neq l \leqslant p}\frac{\left| \hat{\xi}_{kl}\right|}{\sqrt{u_n}}=O_p\left( \sqrt{c_p}\right) $. We choose to use the exact $ c_p $  instead of  $ \log p $ in \cite{fan2015power} in order to avoid biased results for screening in small samples. When $ n $ and $ p $ are large enough, there is no difference in their effects. The selection of $ \log\log n $ is to ensure that $ \delta_{np} $ grows slowly enough and slightly larger than $ c_p $.  In fact, other eligible options are also allowable.
\end{enumerate}
\end{remark} 

Before formally presenting $ J_0 $, we first define a screening set
\begin{eqnarray}\label{estimationS}
\hat{S}\left(\boldsymbol{\hat{\xi}}_p \right) =\left\{(k,l):\left|\hat{\xi}_{kl}\right|>u_n^{1 / 2} \delta_{np}, 1\leqslant k\neq l \leqslant p\right\},
\end{eqnarray}
with a population counterpart as
\begin{eqnarray}\label{populationS}
S\left(\boldsymbol{\xi}_p\right) =\left\{(k,l):\left|\xi_{kl}\right|>2u_n^{1 / 2} \delta_{np}, 1\leqslant k\neq l \leqslant p\right\}.
\end{eqnarray}
Obviously, under $ H_0 $, $ S\left(\boldsymbol{\xi}_p\right)=\emptyset $ , and according to Lemma \ref{lemma4.1}, 
 $$\P\left( \hat{S}\left(\boldsymbol{\hat{\xi}}_p \right)=\emptyset\right)=\P\left( \max _{1\leqslant k\neq l \leqslant p}\left| \hat{\xi}_{kl}\right|/\sqrt{u_n}<\delta_{np}\mid H_0\right)\rightarrow1.$$ 
 Moreover, if $ S\left(\boldsymbol{\xi}_p\right)\neq\emptyset $, for any  $(k,l)\in S\left(\boldsymbol{\xi}_p\right)$, $ \left|\xi_{kl}\right|>2u_n^{1 / 2} \delta_{np} $ implies $ \left|\hat{\xi}_{kl}\right|>u_n^{1 / 2} \delta_{np} $,  thus,
 $ S(\boldsymbol{\xi}_p) \subset \hat{S}\left(\boldsymbol{\hat{\xi}}_p \right) $ with high probability,  these will be formally presented in Theorem \ref{theorem4.1}.

Based on the above analysis, we  construct our screening statistic  $ J_0 $ as 
$$
J_0=\sqrt{p(p-1)} \sum_{(k,l) \in \hat{S}\left(\boldsymbol{\hat{\xi}}_p \right)} \frac{\hat{\xi}_{kl}^2}{u_n}=\sqrt{p(p-1)}\sum_{k\neq l}^{p}\frac{\hat{\xi}_{kl}^2}{u_n}I\left\lbrace\frac{\left| \hat{\xi}_{kl}\right|}{\sqrt{u_n}}>\delta_{np}  \right\rbrace.
$$ 
From the definition of $ J_0 $, it can be seen that $ J_0\geqslant0 $ clearly satisfies power enhancement property  (\ref{propertya}): Nonnegativity. We further set the sparse alternative set
\begin{eqnarray}\label{sparsedef}
 \boldsymbol{\Xi}_s =\left\lbrace \boldsymbol{\xi}_p\in \boldsymbol{\Xi }_a:\max_{1\leqslant k\neq l \leqslant p} \left|\xi_{kl}\right|>2u_n^{1 / 2} \delta_{np}\right\rbrace.
\end{eqnarray}
From the above discussion, the power of $ J_\xi $ is enhanced on $ \boldsymbol{\Xi}_s$ or its subset due to the addition of $ J_0 $. $  \hat{S}\left(\boldsymbol{\hat{\xi}}_p \right) $ not only reveals the alternative structure of $ \boldsymbol{\Xi}_s$, but also identifies non-zero elements, which is beneficial for us to screen out the desired dependent variables in simulation or practical applications.

The following theorem provides properties related to screening statistic  $ J_0 $ and screening set $ \hat{S}\left(\boldsymbol{\hat{\xi}}_p \right) $.

\begin{theorem} \label{theorem4.1}
	Assuming the conditions of Lemma \ref{lemma4.1} hold, as $ n, p \rightarrow \infty $, it has,
	\renewcommand{\theenumi}{\roman{enumi}}
	\renewcommand{\labelenumi}{(\theenumi)}
	\begin{enumerate}
	\item \label{theorem4.1(i)} under  $ H_{0}: \boldsymbol{\xi}_p=\mathbf{0}, \P\left(\hat{S}\left(\boldsymbol{\hat{\xi}}_p \right)=\emptyset \mid H_{0}\right) \rightarrow 1 $, hence
	$ \P\left(J_{0}=0 \mid H_{0}\right) \rightarrow 1 $.
	
	\item \label{theorem4.1(ii)} for any nonempty sparse alternative set $ \boldsymbol{\Xi}_s $ defined in (\ref{sparsedef}),
	$$\inf _{\boldsymbol{\xi}_p \in \boldsymbol{\Xi}_s} \P\left(J_0>\sqrt{p(p-1)} \mid \boldsymbol{\boldsymbol{\xi}_p}\right) \rightarrow 1.  $$
	
	\item \label{theorem4.1(iii)}  for any $ \boldsymbol{\xi}_p \in \boldsymbol{\Xi} $ with  $ \boldsymbol{\Xi}=\boldsymbol{\Xi}_a\cup\{\boldsymbol{0}\}$, $ \hat{S}\left(\boldsymbol{\hat{\xi}}_p\right)  $ and $ S\left( \boldsymbol{\xi}_p\right)  $ are defined in (\ref{estimationS}) and (\ref{populationS}), respectively,
	 $$ 
	\inf _{\boldsymbol{\xi}_p \in \boldsymbol{\Xi}} \P(S\left( \boldsymbol{\xi}_p\right) \subset \hat{S}\left(\boldsymbol{\hat{\xi}}_p \right) \mid \boldsymbol{\xi}_p) \rightarrow 1.$$
	\end{enumerate}
	
\end{theorem}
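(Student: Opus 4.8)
The plan is to deduce all three parts from the single uniform deviation bound supplied by Lemma \ref{lemma4.1}, which controls $\max_{k\neq l}|\hat\xi_{kl}-\xi_{kl}|/u_n^{1/2}$ simultaneously over the whole parameter space. Accordingly, I would introduce the \emph{good event}
$$\mathcal{A}=\left\{\max_{1\leqslant k\neq l\leqslant p}\bigl|\hat\xi_{kl}-\xi_{kl}\bigr|/u_n^{1/2}<\delta_{np}\right\},$$
and record that Lemma \ref{lemma4.1} says exactly $\inf_{\boldsymbol{\xi}_p\in\boldsymbol{\Xi}}\P(\mathcal{A}\mid\boldsymbol{\xi}_p)\to1$. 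The entire argument then reduces to checking that each of the three conclusions holds deterministically on $\mathcal{A}$ (for $n,p$ large), after which the probability statements follow from this uniform lower bound.

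For part (\ref{theorem4.1(i)}), under $H_0$ every $\xi_{kl}=0$, so on $\mathcal{A}$ one has $\max_{k\neq l}|\hat\xi_{kl}|/u_n^{1/2}<\delta_{np}$; by the definition (\ref{estimationS}) of $\hat S$ no index can qualify, giving $\hat S(\boldsymbol{\hat\xi}_p)=\emptyset$ and hence $J_0=0$ as an empty sum. For part (\ref{theorem4.1(iii)}), I would fix any $\boldsymbol{\xi}_p\in\boldsymbol{\Xi}$ and any $(k,l)\in S(\boldsymbol{\xi}_p)$; then $|\xi_{kl}|>2u_n^{1/2}\delta_{np}$ by (\ref{populationS}), so on $\mathcal{A}$ the triangle inequality yields
$$|\hat\xi_{kl}|\geqslant|\xi_{kl}|-|\hat\xi_{kl}-\xi_{kl}|>2u_n^{1/2}\delta_{np}-u_n^{1/2}\delta_{np}=u_n^{1/2}\delta_{np},$$
which is exactly the membership condition (\ref{estimationS}) for $\hat S$. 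Thus $S(\boldsymbol{\xi}_p)\subset\hat S(\boldsymbol{\hat\xi}_p)$ on $\mathcal{A}$, and the claimed uniform inclusion follows.

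Part (\ref{theorem4.1(ii)}) combines part (\ref{theorem4.1(iii)}) with the growth of $\delta_{np}$. For $\boldsymbol{\xi}_p\in\boldsymbol{\Xi}_s$ the definition (\ref{sparsedef}) guarantees at least one pair $(k^\ast,l^\ast)$ with $|\xi_{k^\ast l^\ast}|>2u_n^{1/2}\delta_{np}$, i.e.\ $(k^\ast,l^\ast)\in S(\boldsymbol{\xi}_p)$; on $\mathcal{A}$ the computation of part (\ref{theorem4.1(iii)}) shows $(k^\ast,l^\ast)\in\hat S$ with $\hat\xi_{k^\ast l^\ast}^2/u_n>\delta_{np}^2$. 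Retaining only this term in the nonnegative sum defining $J_0$ gives, on $\mathcal{A}$,
$$J_0\geqslant\sqrt{p(p-1)}\,\frac{\hat\xi_{k^\ast l^\ast}^2}{u_n}>\sqrt{p(p-1)}\,\delta_{np}^2.$$
Since $\delta_{np}=\sqrt{c_p}\log\log n$ with $c_p\to\infty$, we have $\delta_{np}^2\to\infty$, so $\delta_{np}^2>1$ for all large $n,p$, whence $J_0>\sqrt{p(p-1)}$ on $\mathcal{A}$; the required infimum bound then follows from Lemma \ref{lemma4.1} restricted to $\boldsymbol{\Xi}_s\subset\boldsymbol{\Xi}$.

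The point I want to stress is that every deterministic step above is elementary --- a triangle inequality and a one-term lower bound --- and the only genuine work is the concentration estimate of Lemma \ref{lemma4.1}; conditional on that lemma there is no real obstacle. The structural device that makes the bookkeeping close cleanly is the deliberate factor-$2$ gap between the thresholds $2u_n^{1/2}\delta_{np}$ in the population set $S$ and $u_n^{1/2}\delta_{np}$ in the empirical set $\hat S$: it provides precisely the slack needed to absorb the estimation error $|\hat\xi_{kl}-\xi_{kl}|<u_n^{1/2}\delta_{np}$ when passing from $S$ to $\hat S$. The only care required is to keep all bounds uniform in $\boldsymbol{\xi}_p$, which is automatic since the infimum over $\boldsymbol{\Xi}$ is already built into Lemma \ref{lemma4.1}.
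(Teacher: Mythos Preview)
Your proposal is correct and follows essentially the same approach as the paper: both define the good event $\mathcal{A}$ controlled by Lemma~\ref{lemma4.1}, derive part~(\ref{theorem4.1(iii)}) via the triangle inequality exploiting the factor-$2$ gap, obtain part~(\ref{theorem4.1(i)}) by specializing to $\boldsymbol{\xi}_p=\mathbf{0}$, and establish part~(\ref{theorem4.1(ii)}) by lower-bounding $J_0$ using a single index from $S(\boldsymbol{\xi}_p)\subset\hat S(\boldsymbol{\hat\xi}_p)$. The only cosmetic difference is that the paper outsources the final step of part~(\ref{theorem4.1(ii)}) to Theorem~3.1 of \cite{fan2015power}, whereas you spell out the one-term lower bound $J_0>\sqrt{p(p-1)}\,\delta_{np}^2>\sqrt{p(p-1)}$ explicitly; your version is in fact more self-contained.
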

\begin{remark} 
\begin{enumerate} [(1)] 
   \item 	According to Theorem 4.1(\ref{theorem4.1(i)}),  $J_0$ satisfies power enhancement property  (\ref{propertyb}): No-size-distortion. Theorem 4.1(\ref{theorem4.1(ii)}) is a naturally expected result under sparse alternative signals. Theorem 4.1(\ref{theorem4.1(iii)}) guarantees all the significant signals are	contained in  $ \hat{S}(\boldsymbol{\hat{\xi}}_p ) $ with a high probability. However, it should be noted that if $ \boldsymbol{\xi}_p \in \boldsymbol{\Xi}_a $ but $ \boldsymbol{\xi}_p  \not\in \boldsymbol{\Xi}_s $, then $ S(\boldsymbol{\xi}_p)=\emptyset $ and $ \hat{S}(\boldsymbol{\hat{\xi}}_p )= \emptyset$ with high  probability and Theorem 4.1(\ref{theorem4.1(iii)}) also applies. If given some mild conditions, it can be further shown that  $ \P(\hat{S}(\boldsymbol{\hat{\xi}}_p )= S(\boldsymbol{\xi}_p)\mid \boldsymbol{\xi}_p) \rightarrow 1 $ uniformly in $ \boldsymbol{\xi}_p $. Hence the selection is consistent. Since the consistency of the selection is not a requirement for our power enhancement test, we no longer focus on its consistency.
   \item The sparse alternative set $ \boldsymbol{\Xi}_s $ defined in (\ref{sparsedef}) requires at least one component in $\boldsymbol{\xi}_p $ to have magnitude greater than $ 2u_n^{1 / 2} \delta_{np} $, and similar settings are also considered in Section 2.2 of \cite{fan2015power}, Section 4.2 of \cite{drton2020high} and Section 4.1 of \cite{han2017distribution}.
\end{enumerate}
\end{remark} 

The following theorem establishes the claimed  properties of power enhancement test.

\begin{theorem} \label{theorem4.2}
Assuming the conditions of Lemma \ref{lemma4.1} hold, as $n, p \rightarrow \infty$, the power enhancement test $J_E=$ $J_0+J_\xi$ admits the following properties: 
\renewcommand{\theenumi}{\roman{enumi}}
\renewcommand{\labelenumi}{(\theenumi)}
\begin{enumerate}
\item \label{theorem4.2(i)} Under   $H_0, J_E \xrightarrow{d} N(0,1)$.
\item \label{theorem4.2(ii)} The dense alternative set $ \boldsymbol{\Xi}\left(J_\xi\right) $  is defined as follows, for  a positive constant  $ C $,
\begin{eqnarray}\label{densedef}
	\boldsymbol{\Xi}( J_\xi)=\left\lbrace\boldsymbol{\xi}_p\in \boldsymbol{\Xi}_a:\sum_{k\neq l}^{p} \xi_{kl}^2\geqslant Cp^2u_n\delta_{np}^2\right\rbrace,
\end{eqnarray}
then $J_E$ has high power uniformly on the set
$ \boldsymbol{\Xi}_s \cup \boldsymbol{\Xi}\left(J_\xi\right)$, that is, $$\inf _{\boldsymbol{\xi}_p \in \boldsymbol{\Xi}_s \cup \boldsymbol{\Xi}\left(J_\xi\right)} P\left(J_E>z_q \mid \boldsymbol{\xi}_p\right) \rightarrow 1.$$ 
\end{enumerate}
\end{theorem}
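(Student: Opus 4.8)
The plan is to handle the two assertions separately, using as black boxes the null CLT for $J_\xi$ (Theorem \ref{theorem2.1}), the screening properties of $J_0$ and $\hat S$ (Theorem \ref{theorem4.1}), and the order estimates $u_n\asymp 1/n$, $\sigma_{np}\asymp p/n$, $\mu_{np}\asymp p^2/n$ read off from Lemmas \ref{lemma2.1}--\ref{lemma2.2}. Part (\ref{theorem4.2(i)}) is immediate from Slutsky's theorem: by Theorem \ref{theorem4.1}(\ref{theorem4.1(i)}) we have $\P(J_0=0\mid H_0)\to 1$, hence $J_0\xrightarrow{p}0$ under $H_0$, while $J_\xi\xrightarrow{d}N(0,1)$ by Theorem \ref{theorem2.1}; therefore $J_E=J_0+J_\xi\xrightarrow{d}N(0,1)$. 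Quantitatively, for any continuity point $y$ the distribution functions of $J_E$ and $J_\xi$ differ by at most $\P(J_0\neq 0)\to 0$, so no further computation is needed.

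For part (\ref{theorem4.2(ii)}) I would split $\boldsymbol{\Xi}_s\cup\boldsymbol{\Xi}(J_\xi)$ according to which summand of $J_E$ supplies the power, and bound the probability uniformly on each piece. On the dense region $\boldsymbol{\Xi}(J_\xi)$ of (\ref{densedef}) I would show that $J_\xi$ by itself already exceeds $z_q$ with probability tending to one, and then use nonnegativity $J_0\ge 0$ (property (\ref{propertya})) to obtain $J_E\ge J_\xi>z_q$. The key computation: since every pair with $\xi_{kl}=0$ satisfies $\E(\hat\xi_{kl}^2)=u_n$ exactly and the remaining pairs contribute $\E(\hat\xi_{kl}^2)=\xi_{kl}^2+O(1/n)$, the mean shift is $\E(T_{np})-\mu_{np}=\sum_{k\neq l}\xi_{kl}^2\,(1+o(1))\ge Cp^2u_n\delta_{np}^2\,(1+o(1))$; dividing by $\sigma_{np}\asymp p/n$ gives a drift of order $p\,\delta_{np}^2\to\infty$, which dominates both $z_q$ and $z_q\sigma_{np}\asymp p/n$. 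A one-sided Chebyshev bound on the lower tail of $T_{np}$ then forces $\P(J_\xi\le z_q)\to 0$, uniformly over the dense set.

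On the sparse region $\boldsymbol{\Xi}_s$ of (\ref{sparsedef}) I would rely on deterministic bounds so as to avoid any alternative-moment computation. Theorem \ref{theorem4.1}(\ref{theorem4.1(ii)}) gives $\inf_{\boldsymbol{\xi}_p\in\boldsymbol{\Xi}_s}\P(J_0>\sqrt{p(p-1)}\mid\boldsymbol{\xi}_p)\to 1$; on that event $\hat S(\boldsymbol{\hat\xi}_p)\neq\emptyset$, so every retained index contributes at least $\delta_{np}^2$ to the inner sum and in fact $J_0\ge\sqrt{p(p-1)}\,\delta_{np}^2$. Meanwhile $T_{np}\ge 0$ yields the purely deterministic bound $J_\xi\ge-\sigma_{np}^{-1}p(p-1)u_n$, of order $-p$. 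Hence on this high-probability event $J_E\ge\sqrt{p(p-1)}\,\delta_{np}^2-Cp=p(\delta_{np}^2-C)(1+o(1))\to\infty$ because $\delta_{np}^2\to\infty$, so eventually $J_E>z_q$. Taking the minimum of the two regional conclusions establishes the uniform statement $\inf_{\boldsymbol{\xi}_p\in\boldsymbol{\Xi}_s\cup\boldsymbol{\Xi}(J_\xi)}\P(J_E>z_q\mid\boldsymbol{\xi}_p)\to 1$.

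The main obstacle I anticipate is the dense-alternative step, because Lemmas \ref{lemma2.1}--\ref{lemma2.2} only supply moments under $H_0$, whereas the Chebyshev argument needs $\Var_{H_a}(T_{np})$. Concretely, I must verify $\Var_{H_a}(T_{np})=o\bigl(p^4\delta_{np}^4/n^2\bigr)$ uniformly over $\boldsymbol{\Xi}(J_\xi)$, which holds comfortably whenever the alternative variance is of the same order $p^2/n^2$ as under the null, but requires an explicit bound such as $\Var_{H_a}(T_{np})\le C\sigma_{np}^2+(\text{signal-dependent term})$. I would derive this by reusing the pairwise independence of the $\xi_{kl}=0$ coordinates and the covariance bookkeeping already carried out in the proof of Theorem \ref{theorem2.2}, then check that the signal-dependent term stays below the squared mean shift uniformly so that the infimum over the dense set survives. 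By contrast, the sparse side needs no such control and is essentially a bookkeeping consequence of Theorem \ref{theorem4.1}.
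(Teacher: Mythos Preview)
Your argument for part (\ref{theorem4.2(i)}) and for the sparse region in part (\ref{theorem4.2(ii)}) is essentially the paper's: Slutsky via $\P(J_0=0\mid H_0)\to 1$, and on $\boldsymbol{\Xi}_s$ combine $J_0>\sqrt{p(p-1)}$ (Theorem~\ref{theorem4.1}(\ref{theorem4.1(ii)})) with the deterministic lower bound $J_\xi\geqslant -\sigma_{np}^{-1}p(p-1)u_n\asymp -p/\sqrt{2}$. The paper packages this as verifying the conditions of Theorem~3.2 in \cite{fan2015power}, but the content is the same; your observation that in fact $J_0\geqslant\sqrt{p(p-1)}\,\delta_{np}^2$ on $\{\hat S\neq\emptyset\}$ makes the comparison even cleaner.

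For the dense region you take a genuinely different route, and the obstacle you flag is real: a Chebyshev argument needs $\Var_{H_a}(T_{np})$ and a bias bound $\E_{H_a}(\hat\xi_{kl}^2)=\xi_{kl}^2+O(1/n)$, neither of which the paper supplies uniformly over $\boldsymbol{\Xi}(J_\xi)$. The paper sidesteps this entirely in Lemma~A.2 by working on a high-probability \emph{event} rather than with moments. Specifically, Lemma~\ref{lemma4.1} gives $\max_{k\neq l}|\hat\xi_{kl}-\xi_{kl}|<u_n^{1/2}\delta_{np}$ with probability tending to one uniformly in $\boldsymbol{\xi}_p\in\boldsymbol{\Xi}$; on that event $\sum_{k\neq l}(\hat\xi_{kl}-\xi_{kl})^2<p^2u_n\delta_{np}^2$, and Cauchy--Schwarz then yields
\[
\sum_{k\neq l}\hat\xi_{kl}^2\;\geqslant\;\sum_{k\neq l}\xi_{kl}^2-2\Bigl(\sum_{k\neq l}(\hat\xi_{kl}-\xi_{kl})^2\Bigr)^{1/2}\Bigl(\sum_{k\neq l}\xi_{kl}^2\Bigr)^{1/2}\;\geqslant\;\tfrac{1}{2}\sum_{k\neq l}\xi_{kl}^2
\]
once $\sum\xi_{kl}^2\geqslant 16C^2p^2u_n\delta_{np}^2$. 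From there $J_\xi\geqslant\sigma_{np}^{-1}\bigl(\tfrac{1}{2}\sum\xi_{kl}^2-p(p-1)u_n\bigr)\to\infty$ uniformly, with no alternative-moment computation whatsoever. Your Chebyshev plan could in principle be completed, but it would require new variance bounds that the paper never develops; the event-based argument is both shorter and avoids the gap.
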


\begin{remark} 
\begin{enumerate} [(1)]  
\item Theorem 4.2(\ref{theorem4.2(i)}) demonstrates that the addition of screening statistic does not change the limiting distribution of quadratic statistic under the null hypothesis and furthermore does not cause any size distortion for large sample size. Additionally, Theorem 4.2(\ref{theorem4.2(ii)}) ensures that the power for the power enhancement test does not decrease and tends to 1 after the addition of sparse alternative hypothesis. In essence, in high-dimensional tests, for quadratic statistic $ J_\xi $, $ \boldsymbol{\Xi}\left(J_\xi\right) $ is already a uniformly high power region, and the relevant proof is deferred to Lemma A.2 in Appendix \ref{appendix}. With the addition of $ J_0 $, the region is expanded to $ \boldsymbol{\Xi}_s \cup \boldsymbol{\Xi}\left(J_\xi\right) $. Obviously, due to the increase in the range of alternative regions, the power of $ J_\xi $ is naturally enhanced, which satisfies power enhancement property (\ref{propertyc}).

\item In contrast to the sparse alternative set  $ \boldsymbol{\Xi}_s $, the dense alternative set $\boldsymbol{\Xi}( J_\xi) $ defined in (\ref{densedef}) suggests that, for certain constant  $ C $, the magnitude of each non-zero component in $\boldsymbol{\xi}_p $, numbering in the order of $ O(p^2) $, surpasses $ u_n^{1 / 2} \delta_{np} $. This suggestion is indicative of a certain degree of density within set $\boldsymbol{\Xi}( J_\xi) $, which stands out in contrast to the sparsity of set $ \boldsymbol{\Xi}_s $.

\end{enumerate} 
\end{remark} 

\section{Simulations}\label{section5}
In this section, we generate synthetic data to investigate the performance of our proposed approaches, namely, the quadratic statistic $ J_\xi $ in Section \ref{section2}, the adjustment form of extreme value statistic  $ L_{np} $ (called $ M_\xi $) in Section \ref{section3}, i.e., $ M_\xi=L_{n p}^{2} / u_{n}-c_p $, and the power enhancement test $ J_E $ in Section \ref{section4}.

The following tests are used for comparison. Four types of quadratic tests are   $ S_r $ (\cite{Schott2005testing}), $ S_\rho $ (\cite{mao2017robust}), $ S_\tau $ (\cite{mao2018testing})  and $ S_\varphi $ (\cite{shi2023max}),  based on Pearson correlation, Spearman’s $ \rho $, Kendall’s $\tau$  and Spearman’s footrule, respectively. Three types of extreme value tests are $M_\rho $ (\cite{han2017distribution}),   $ M_\tau $ (\cite{han2017distribution})  and $ M_\varphi $ (\cite{shi2023max}),   based on Spearman’s $ \rho $, Kendall’s $\tau$ and Spearman’s footrule, respectively. In addition, we also add a comparison with the rank-based test ($ C_\varphi $) proposed by \cite{shi2023max} via minimizing the $p$-values of $ S_\varphi $  and $ M_\varphi $.

 Three examples including 12 models   are considered to generate the synthetic data from $\boldsymbol{X}=\left(X_1, \ldots, X_p\right)^{\top}$ with dimensions $p=100,200,400,800$  and sample sizes $n=50,100$. The four models in Example 1 mainly generate data under the null hypotheses to verify the validity of the proposed test methods.  The distributions of the four models include standard normal distribution and non normal heavy tailed distribution. Example 2 generates data under  dense alternative hypotheses, including linear, nonlinear and oscillatory dependent data. Example 3 generates data under various sparse alternative hypotheses. The significance level for each test is $q=0.05$. For  each model, we perform 1000 independent replicates. In the following, with a slight abuse of notation, we write $f(\boldsymbol{w})=$ $\left(f\left(w_1\right), \ldots, f\left(w_p\right)\right)^{\top}$ for any univariate function $f: \mathbb{R} \rightarrow \mathbb{R}$ and $\boldsymbol{w}=\left(w_1, \ldots, w_p\right)^{\top} \in \mathbb{R}^p$.  
 The empirical size in Example 1 is presented in Table \ref{table1}.  The empirical rejection rates of Example 2 and Example 3 are shown in Table \ref{table2} and Table  \ref{table3}, respectively. In addition, to test the ability of our screening statistic to screen out variables, we also record the frequency of the set $ \hat{S} $  being a nonempty set.

\subsection*{Example 1. Data generating under $ H_0 $}\label{example5.1}
\begin{enumerate}[(a)]
\item \label{example1(a)} $\boldsymbol{X} \sim  N_p\left(0, \mathbf{I}_p\right)$, where $\textbf{I}_p$ is the identity matrix of dimension $p$.
	
\item \label{example1(b)} $\boldsymbol{X}=\boldsymbol{W}^3$ with $\boldsymbol{W} \sim N_p\left(0, \mathbf{I}_p\right)$.
	
\item \label{example1(c)} $X_1, \ldots, X_p \stackrel{\text { i.i.d }}{\sim} Cauchy(0,1)$.
	
\item \label{example1(d)} $X_1, \ldots, X_p\stackrel{\text { i.i.d }}{\sim} t(3)$ which is a $t$-distribution with 3 degrees of freedom.
	
\end{enumerate}

\subsection*{Example 2. Data generating under dense alternative hypotheses} \label{example2}  
\begin{enumerate}[(a)]
\item \label{example2(a)} $ \boldsymbol{X} \sim N_p\left(0, \Sigma_\rho\right)$ with $\Sigma_\rho=\rho \textbf{I}_p+(1-\rho) \textbf{e}_p$, where $\textbf{e}_p$  is a $p\times p  $ matrix with all entries being 1, $\rho=0.1$.

\item \label{example2(b)} $ \boldsymbol{X} \sim N_p\left(0, \Sigma_\rho\right)$ with $\Sigma_\rho=\left(\sigma_{i j}\right)_{p \times p}$, $ \sigma_{i j}=\rho^{|i-j|} $ for $1 \leqslant i,j \leqslant p$, $\rho=0.3$.

\item\label{example2(c)} $ \boldsymbol{X}=\boldsymbol{V}+0.4\boldsymbol{U}$, $\boldsymbol{V}=\left(\boldsymbol{W^{\top}},\sin(2\pi \boldsymbol{W})^{\top},\cos(2\pi\boldsymbol{W})^{\top},\sin(4\pi\boldsymbol{W})^{\top},\cos(4\pi \boldsymbol{W})^{\top} \right)^{\top} $, where $ \boldsymbol{W} \sim N_{p/5}\left(0, \textbf{I}_{p/5}\right)$, and noise vector $\boldsymbol{U} \sim N_{p}\left(0, \textbf{I}_{p}\right) $ is independent of $ \boldsymbol{V} $ and $ \boldsymbol{W} $.

\item \label{example2(d)} $\boldsymbol{X}=\left(\boldsymbol{W^{\top}},\log( \boldsymbol{W^2})^{\top}+3\boldsymbol{V}^{\top} \right)^{\top} $, where $ \boldsymbol{W}$ and $\boldsymbol{V}$ are mutually independent and both from $ N_{p/2}\left(0, \textbf{I}_{p/2}\right)$.
\end{enumerate}  
 
\subsection*{Example 3. Data generating under sparse alternative hypotheses} \label{example3}   
\begin{enumerate}[(a)]
\item \label{example3(a)}  $\boldsymbol{X}\sim N_p\left( 0, \Sigma\right) $ with $\Sigma=\left(\sigma_{i j}\right)_{p \times p}, \sigma_{11}=$ $\cdots=\sigma_{p p}=1, \sigma_{12}=\sigma_{21}=2.7\sqrt{\frac{\log p}{n}}$ and the remaining elements being 0.

\item \label{example3(b)} \textbf{(Quadratic)} $\boldsymbol{X}=\left(U,V,\boldsymbol{W}^{\top}\right)^{\top}   $, with  $U=V^2+Z/3$, $ \left( V,\boldsymbol{W^{\top}}\right)^{\top}\sim N_{p-1}\left(  0, I_{p-1}\right),$  and  $Z\sim N(0,1)$.

\item \label{example3(c)} \textbf{(W-shaped)} $\boldsymbol{X}=\left(U,V,\boldsymbol{W}^{\top}\right)^{\top}  $, with  $U=|V+0.5|I(V<0)+|V-0.5|I(V\geqslant0)$ and $ \left( V,\boldsymbol{W^{\top}}\right)^{\top}\sim N_{p-1}\left(  0, I_{p-1}\right) $ independent of $U$.

\item \label{example3(d)} (\textbf{Sinusoid}) $\boldsymbol{X}=\left(U,V,\boldsymbol{W}^{\top}\right)^{\top} $, with  $  U=\cos\left(2\pi V \right)+\lambda\varepsilon$, $ \varepsilon \sim N(0,1) $ independent of $U$, $V$ and $\boldsymbol{W}$,  $ \left( V,\boldsymbol{W^{\top}}\right)^{\top}\sim N_{p-1}\left(  0, I_{p-1}\right) $ independent of $U$. In the setting, $ \lambda $ controls oscillatory strength and $ 0\leqslant\lambda\leqslant 1 $, as $ \lambda $ decreases, the oscillation becomes stronger, and set $ \lambda=0.05 $.
\end{enumerate}  

From Table \ref{table1}, we can see that all quadratic tests except   $ S_r $ can effectively control the size even when the sample size is relatively small, while $ S_r $ only exhibits normal empirical size in Example 1(\ref{example1(a)}) which is specifically designed for Gaussian models. As for all extreme value tests, due to their slow convergence to the extreme distribution, their empirical size distortion occurs as expected.

Table \ref{table2} presents the simulation results for Example 2(\ref{example2(a)})-2(\ref{example2(d)}).  In Example 2(\ref{example2(a)})-2(\ref{example2(b)}), the random vector under dense alternative model exhibits linear dependence. As expected,  $ \mathcal{L}_{2} $-type test methods would perform well. Surprisingly, test method  $ C_\varphi $  performs as well as  $ \mathcal{L}_{2} $-type test methods. Although slightly inferior to them, our proposed $ \mathcal{L}_{2} $-type test still maintains a certain level of linear detection ability. Compared with  $ \mathcal{L}_{2} $-type test methods,  all  $ \mathcal{L}_\infty $-type test methods perform bad under all circumstances we considered. For small sample size $n$, the empirical powers of  $M_\rho$ and $ M_\xi$  fall  below the given significance level, and even surprisingly near zero. In Example 2(\ref{example2(c)})-2(\ref{example2(d)}), the  proposed three tests outperform all the other tests, performing exceptionally well, even for the proposed extreme value test ($ M_\xi $) which  has a  high power when the sample size is relatively large ($ n=100 $). Surprisingly,  existing test methods are invalidated in Example 2(\ref{example2(c)})-2(\ref{example2(d)}). It should be emphasized that our screening set exhibits a certain degree of strength in terms of frequency of nonempty, especially in Example 2(\ref{example2(d)}), but has little effect on power enhancement. This is because our quadratic test has already shown favourable power under dense alternative hypotheses, while the screening statistic only shows significant performance under  sparse alternative hypotheses.

 Table \ref{table3}   presents the simulation results for the sparse alternatives. In the setting of sparse linearly dependent case in Example 3(\ref{example3(a)}), all extreme value tests show good performance as expected, while the proposed methods still hold a position behind them. In Example  3(\ref{example3(c)}) and Example  3(\ref{example3(d)}), specifically designed for sparse oscillatory models, the proposed extreme test wins with  marginal superiority. Additionally, it is worth noting that our screening statistic screens out a large amount of dependent signals  and the power of the quadratic test has been greatly enhanced, while all the remaining tests have almost lost their testing ability.
\begin{table}[htbp]
	\centering
\addtolength{\tabcolsep}{-3pt}
\caption{Empirical size for Example 1(a)-(d)}
\label{table1}
	\begin{tabular}{cccccccccccccc}
		\toprule
				$n$     & $p$     & $ S_r $    & $ S_\rho $  & $ S_\tau $  & $ S_\varphi $ & $ M_\rho $  & $ M_\tau $  & $ M_\varphi $ & $ C_\varphi $ & $ J_\xi $   &$  M_\xi $   & $ J_E $    & $ P (\hat{S}\neq\emptyset) $ \\
		\midrule
		\multicolumn{14}{c}{Example 1(\ref{example1(a)})} \\
		$ n $=50  & $ p $=100 & 0.045  & 0.048  & 0.043  & 0.054  & 0.003  & 0.015  & 0.025  & 0.036  & 0.040  & 0.029  & 0.040  & 0.000  \\
		& $ p $=200 & 0.061  & 0.054  & 0.054  & 0.054  & 0.003  & 0.012  & 0.020  & 0.045  & 0.052  & 0.027  & 0.052  & 0.000  \\
		& $ p $=400 & 0.047  & 0.039  & 0.037  & 0.043  & 0.002  & 0.012  & 0.014  & 0.033  & 0.063  & 0.013  & 0.063  & 0.000  \\
		& $ p $=800 & 0.059  & 0.057  & 0.060  & 0.050  & 0.000  & 0.007  & 0.019  & 0.030  & 0.055  & 0.018  & 0.055  & 0.000  \\
		&       &       &       &       &       &       &       &       &       &       &       &       &  \\
		$ n $=100 & $ p $=100 & 0.058  & 0.048  & 0.048  & 0.041  & 0.016  & 0.025  & 0.034  & 0.041  & 0.050  & 0.033  & 0.050  & 0.000  \\
		& $ p $=200 & 0.055  & 0.046  & 0.046  & 0.046  & 0.016  & 0.032  & 0.038  & 0.046  & 0.041  & 0.030  & 0.041  & 0.000  \\
		& $ p $=400 & 0.049  & 0.047  & 0.049  & 0.048  & 0.016  & 0.029  & 0.036  & 0.045  & 0.051  & 0.024  & 0.051  & 0.000  \\
		& $ p $=800 & 0.051  & 0.045  & 0.038  & 0.047  & 0.007  & 0.026  & 0.032  & 0.041  & 0.050  & 0.033  & 0.050  & 0.000  \\
		\multicolumn{14}{c}{} \\
		\multicolumn{14}{c}{Example 1(\ref{example1(b)})} \\
		$ n $=50  & $ p $=100 & 0.228  & 0.048  & 0.051  & 0.053  & 0.003  & 0.018  & 0.025  & 0.044  & 0.059  & 0.029  & 0.059  & 0.000  \\
		& $ p $=200 & 0.214  & 0.042  & 0.048  & 0.046  & 0.001  & 0.015  & 0.016  & 0.029  & 0.052  & 0.013  & 0.052  & 0.000  \\
		& $ p $=400 & 0.170  & 0.062  & 0.055  & 0.059  & 0.004  & 0.018  & 0.018  & 0.044  & 0.053  & 0.024  & 0.053  & 0.000  \\
		& $ p $=800 & 0.221  & 0.064  & 0.061  & 0.066  & 0.001  & 0.008  & 0.028  & 0.049  & 0.046  & 0.017  & 0.046  & 0.000  \\
		&       &       &       &       &       &       &       &       &       &       &       &       &  \\
		$ n $=100 & $ p $=100 & 0.196  & 0.048  & 0.046  & 0.046  & 0.029  & 0.042  & 0.048  & 0.052  & 0.047  & 0.039  & 0.047  & 0.000  \\
		& $ p $=200 & 0.227  & 0.049  & 0.049  & 0.048  & 0.013  & 0.030  & 0.029  & 0.035  & 0.055  & 0.039  & 0.055  & 0.000  \\
		& $ p $=400 & 0.227  & 0.043  & 0.045  & 0.041  & 0.013  & 0.031  & 0.032  & 0.037  & 0.045  & 0.036  & 0.045  & 0.000  \\
		& $ p $=800 & 0.214  & 0.049  & 0.050  & 0.038  & 0.009  & 0.026  & 0.026  & 0.038  & 0.040  & 0.021  & 0.040  & 0.000  \\
		\multicolumn{14}{c}{} \\
		\multicolumn{14}{c}{Example 1(\ref{example1(c)})} \\
		$ n $=50  & $ p $=100 & 0.443  & 0.059  & 0.059  & 0.049  & 0.009  & 0.021  & 0.025  & 0.038  & 0.052  & 0.025  & 0.052  & 0.000  \\
		& $ p $=200 & 0.444  & 0.052  & 0.053  & 0.045  & 0.003  & 0.030  & 0.027  & 0.047  & 0.040  & 0.022  & 0.040  & 0.000  \\
		& $ p $=400 & 0.475  & 0.057  & 0.055  & 0.059  & 0.002  & 0.017  & 0.021  & 0.044  & 0.048  & 0.015  & 0.048  & 0.000  \\
		& $ p $=800 & 0.454  & 0.055  & 0.054  & 0.058  & 0.001  & 0.009  & 0.013  & 0.039  & 0.051  & 0.010  & 0.051  & 0.000  \\
		&       &       &       &       &       &       &       &       &       &       &       &       &  \\
		$ n $=100 & $ p $=100 & 0.602  & 0.050  & 0.047  & 0.056  & 0.018  & 0.037  & 0.030  & 0.044  & 0.043  & 0.044  & 0.043  & 0.000  \\
		& $ p $=200 & 0.583  & 0.042  & 0.047  & 0.049  & 0.014  & 0.027  & 0.027  & 0.033  & 0.048  & 0.034  & 0.048  & 0.000  \\
		& $ p $=400 & 0.590  & 0.049  & 0.047  & 0.051  & 0.016  & 0.035  & 0.042  & 0.046  & 0.046  & 0.021  & 0.046  & 0.000  \\
		& $ p $=800 & 0.589  & 0.055  & 0.060  & 0.063  & 0.011  & 0.026  & 0.034  & 0.052  & 0.042  & 0.027  & 0.042  & 0.000  \\
		\multicolumn{14}{c}{} \\
		\multicolumn{14}{c}{Example 1(\ref{example1(d)})} \\
		$ n $=50  & $ p $=100 & 0.080  & 0.046  & 0.046  & 0.038  & 0.010  & 0.027  & 0.031  & 0.037  & 0.052  & 0.026  & 0.052  & 0.000  \\
		& $ p $=200 & 0.092  & 0.054  & 0.058  & 0.048  & 0.006  & 0.021  & 0.027  & 0.043  & 0.051  & 0.026  & 0.051  & 0.000  \\
		& $ p $=400 & 0.089  & 0.045  & 0.043  & 0.046  & 0.002  & 0.021  & 0.020  & 0.037  & 0.048  & 0.013  & 0.048  & 0.000  \\
		& $ p $=800 & 0.086  & 0.048  & 0.041  & 0.063  & 0.000  & 0.009  & 0.017  & 0.039  & 0.048  & 0.013  & 0.048  & 0.000  \\
		&       &       &       &       &       &       &       &       &       &       &       &       &  \\
		$ n $=100 & $ p $=100 & 0.071  & 0.057  & 0.046  & 0.044  & 0.022  & 0.042  & 0.038  & 0.035  & 0.054  & 0.032  & 0.054  & 0.000  \\
		& $ p $=200 & 0.084  & 0.043  & 0.044  & 0.037  & 0.013  & 0.029  & 0.026  & 0.036  & 0.049  & 0.024  & 0.049  & 0.000  \\
		& $ p $=400 & 0.083  & 0.055  & 0.056  & 0.058  & 0.015  & 0.027  & 0.041  & 0.039  & 0.052  & 0.028  & 0.052  & 0.000  \\
		& $ p $=800 & 0.081  & 0.064  & 0.062  & 0.054  & 0.012  & 0.027  & 0.032  & 0.043  & 0.047  & 0.027  & 0.047  & 0.000  \\
		\bottomrule
	\end{tabular}%
	\label{tab:addlabel}%
\end{table}%

\begin{table}[htbp]
	\centering
\addtolength{\tabcolsep}{-3pt}
\caption{Rejection frequencies for Example 2(a)-(d) under dense alternative hypotheses}
\label{table2}
	\begin{tabular}{cccccccccccccc}
		\toprule
		$ n $     & $ p $     & $ S_r $    & $ S_\rho $  & $ S_\tau $  & $ S_\varphi $ & $ M_\rho $  & $ M_\tau $  & $ M_\varphi $ & $ C_\varphi $ & $ J_\xi $   &$  M_\xi $   & $ J_E $    & $ P (\hat{S}\neq\emptyset) $ \\
		\midrule
		\multicolumn{14}{c}{Example 2(\ref{example2(a)})} \\
		$ n $=50  & $ p $=100 & 1.000  & 1.000  & 1.000  & 1.000  & 0.049  & 0.145  & 0.274  & 1.000  & 0.412  & 0.027  & 0.412  & 0.000  \\
		& $ p $=200 & 1.000  & 1.000  & 1.000  & 1.000  & 0.030  & 0.144  & 0.266  & 1.000  & 0.770  & 0.032  & 0.770  & 0.000  \\
		& $ p $=400 & 1.000  & 1.000  & 1.000  & 1.000  & 0.013  & 0.111  & 0.279  & 1.000  & 0.962  & 0.039  & 0.962  & 0.000  \\
		& $ p $=800 & 1.000  & 1.000  & 1.000  & 1.000  & 0.006  & 0.097  & 0.282  & 1.000  & 0.999  & 0.036  & 0.999  & 0.000  \\
		&       &       &       &       &       &       &       &       &       &       &       &       &  \\
		$ n $=100 & $ p $=100 & 1.000  & 1.000  & 1.000  & 1.000  & 0.399  & 0.513  & 0.596  & 1.000  & 0.546  & 0.064  & 0.546  & 0.000  \\
		& $ p $=200 & 1.000  & 1.000  & 1.000  & 1.000  & 0.417  & 0.581  & 0.694  & 1.000  & 0.897  & 0.044  & 0.897  & 0.000  \\
		& $ p $=400 & 1.000  & 1.000  & 1.000  & 1.000  & 0.384  & 0.581  & 0.748  & 1.000  & 0.997  & 0.071  & 0.997  & 0.000  \\
		& $ p $=800 & 1.000  & 1.000  & 1.000  & 1.000  & 0.359  & 0.629  & 0.809  & 1.000  & 1.000  & 0.069  & 1.000  & 0.000  \\
		&       &       &       &       &       &       &       &       &       &       &       &       &  \\
		\multicolumn{14}{c}{Example 2(\ref{example2(b)})} \\
		$ n $=50  & $ p $=100 & 0.985  & 0.970  & 0.975  & 0.962  & 0.149  & 0.320  & 0.387  & 0.967  & 0.155  & 0.036  & 0.155  & 0.000  \\
		& $ p $=200 & 0.996  & 0.974  & 0.978  & 0.957  & 0.068  & 0.219  & 0.307  & 0.966  & 0.161  & 0.029  & 0.161  & 0.000  \\
		& $ p $=400 & 0.990  & 0.974  & 0.977  & 0.969  & 0.034  & 0.147  & 0.232  & 0.971  & 0.146  & 0.019  & 0.146  & 0.000  \\
		& $ p $=800 & 0.992  & 0.976  & 0.977  & 0.975  & 0.010  & 0.080  & 0.167  & 0.977  & 0.145  & 0.016  & 0.145  & 0.000  \\
		&       &       &       &       &       &       &       &       &       &       &       &       &  \\
		$ n $=100 & $ p $=100 & 1.000  & 1.000  & 1.000  & 1.000  & 0.986  & 0.995  & 0.991  & 1.000  & 0.308  & 0.077  & 0.308  & 0.000  \\
		& $ p $=200 & 1.000  & 1.000  & 1.000  & 1.000  & 0.964  & 0.988  & 0.993  & 1.000  & 0.312  & 0.056  & 0.312  & 0.000  \\
		& $ p $=400 & 1.000  & 1.000  & 1.000  & 1.000  & 0.934  & 0.985  & 0.980  & 1.000  & 0.280  & 0.052  & 0.280  & 0.000  \\
		& $ p $=800 & 1.000  & 1.000  & 1.000  & 1.000  & 0.905  & 0.984  & 0.986  & 1.000  & 0.292  & 0.047  & 0.292  & 0.000  \\
		&       &       &       &       &       &       &       &       &       &       &       &       &  \\
		\multicolumn{14}{c}{Example 2(\ref{example2(c)})} \\
		$ n $=50  & $ p $=100 & 0.045  & 0.047  & 0.044  & 0.061  & 0.007  & 0.025  & 0.022  & 0.047  & 0.269  & 0.148  & 0.269  & 0.001  \\
		& $ p $=200 & 0.045  & 0.046  & 0.043  & 0.046  & 0.002  & 0.027  & 0.017  & 0.046  & 0.274  & 0.106  & 0.274  & 0.000  \\
		& $ p $=400 & 0.048  & 0.039  & 0.039  & 0.047  & 0.004  & 0.020  & 0.028  & 0.050  & 0.262  & 0.088  & 0.262  & 0.000  \\
		& $ p $=800 & 0.043  & 0.045  & 0.039  & 0.042  & 0.000  & 0.006  & 0.015  & 0.035  & 0.241  & 0.081  & 0.241  & 0.000  \\
		&       &       &       &       &       &       &       &       &       &       &       &       &  \\
		$ n $=100 & $ p $=100 & 0.053  & 0.046  & 0.049  & 0.063  & 0.017  & 0.022  & 0.031  & 0.051  & 0.609  & 0.648  & 0.611  & 0.004  \\
		& $ p $=200 & 0.064  & 0.050  & 0.053  & 0.065  & 0.015  & 0.031  & 0.036  & 0.069  & 0.602  & 0.638  & 0.602  & 0.000  \\
		& $ p $=400 & 0.055  & 0.057  & 0.060  & 0.061  & 0.020  & 0.033  & 0.027  & 0.053  & 0.619  & 0.648  & 0.619  & 0.000  \\
		& $ p $=800 & 0.054  & 0.048  & 0.048  & 0.063  & 0.010  & 0.026  & 0.025  & 0.060  & 0.601  & 0.643  & 0.601  & 0.000  \\
		&       &       &       &       &       &       &       &       &       &       &       &       &  \\
		\multicolumn{14}{c}{Example 2(\ref{example2(d)})} \\
		$ n $=50  & $ p $=100 & 0.039  & 0.051  & 0.048  & 0.055  & 0.009  & 0.020  & 0.036  & 0.047  & 0.485  & 0.397  & 0.489  & 0.010  \\
		& $ p $=200 & 0.035  & 0.050  & 0.053  & 0.050  & 0.007  & 0.019  & 0.021  & 0.036  & 0.494  & 0.375  & 0.495  & 0.001  \\
		& $ p $=400 & 0.050  & 0.059  & 0.059  & 0.060  & 0.001  & 0.011  & 0.015  & 0.045  & 0.457  & 0.305  & 0.457  & 0.000  \\
		& $ p $=800 & 0.042  & 0.041  & 0.046  & 0.055  & 0.001  & 0.019  & 0.029  & 0.053  & 0.488  & 0.261  & 0.488  & 0.000  \\
		&       &       &       &       &       &       &       &       &       &       &       &       &  \\
		$ n $=100 & $ p $=100 & 0.058  & 0.053  & 0.057  & 0.050  & 0.022  & 0.037  & 0.036  & 0.048  & 0.939  & 0.976  & 0.939  & 0.021  \\
		& $ p $=200 & 0.043  & 0.047  & 0.049  & 0.058  & 0.021  & 0.035  & 0.041  & 0.065  & 0.950  & 0.991  & 0.950  & 0.005  \\
		& $ p $=400 & 0.044  & 0.045  & 0.048  & 0.054  & 0.013  & 0.023  & 0.024  & 0.053  & 0.944  & 0.985  & 0.944  & 0.002  \\
		& $ p $=800 & 0.055  & 0.048  & 0.048  & 0.058  & 0.010  & 0.027  & 0.032  & 0.053  & 0.952  & 0.996  & 0.952  & 0.000  \\
		\bottomrule
	\end{tabular}%
	\label{tab:addlabel}%
\end{table}%

\begin{table}[htbp]
\centering
\addtolength{\tabcolsep}{-3pt}
\caption{Rejection frequencies for Example 3(a)-(d) under sparse alternative hypotheses}
\label{table3}
	\begin{tabular}{cccccccccccccc}
		\toprule
			$ n $     & $ p $     & $ S_r $    & $ S_\rho $  & $ S_\tau $  & $ S_\varphi $ & $ M_\rho $  & $ M_\tau $  & $ M_\varphi $ & $ C_\varphi $ & $ J_\xi $   &$  M_\xi $   & $ J_E $    & $ P (\hat{S}\neq\emptyset) $ \\
		\midrule
		\multicolumn{14}{c}{Example 3(\ref{example3(a)})} \\
		$ n $=50  & $ p $=100 & 0.076  & 0.069  & 0.080  & 0.069  & 0.988  & 0.996  & 0.996  & 0.993  & 0.117  & 0.737  & 0.291  & 0.215  \\
		& $ p $=200 & 0.065  & 0.056  & 0.053  & 0.062  & 0.998  & 1.000  & 1.000  & 1.000  & 0.071  & 0.935  & 0.467  & 0.430  \\
		& $ p $=400 & 0.054  & 0.048  & 0.051  & 0.057  & 1.000  & 1.000  & 1.000  & 1.000  & 0.079  & 0.997  & 0.882  & 0.868  \\
		& $ p $=800 & 0.043  & 0.039  & 0.036  & 0.049  & 1.000  & 1.000  & 1.000  & 1.000  & 0.063  & 1.000  & 1.000  & 1.000  \\
		&       &       &       &       &       &       &       &       &       &       &       &       &  \\
		$ n $=100 & $ p $=100 & 0.059  & 0.058  & 0.064  & 0.057  & 0.924  & 0.951  & 0.932  & 0.906  & 0.066  & 0.189  & 0.066  & 0.000  \\
		& $ p $=200 & 0.051  & 0.047  & 0.050  & 0.053  & 0.965  & 0.978  & 0.964  & 0.953  & 0.068  & 0.230  & 0.071  & 0.003  \\
		& $ p $=400 & 0.059  & 0.052  & 0.054  & 0.052  & 0.977  & 0.986  & 0.979  & 0.974  & 0.052  & 0.288  & 0.054  & 0.002  \\
		& $ p $=800 & 0.061  & 0.053  & 0.057  & 0.066  & 0.986  & 0.993  & 0.990  & 0.983  & 0.058  & 0.379  & 0.060  & 0.002  \\
		&       &       &       &       &       &       &       &       &       &       &       &       &  \\
		\multicolumn{14}{c}{Example 3(\ref{example3(b)})} \\
		$ n $=50  & $ p $=100 & 0.056  & 0.056  & 0.052  & 0.050  & 0.011  & 0.034  & 0.038  & 0.049  & 0.099  & 0.963  & 0.733  & 0.708  \\
		& $ p $=200 & 0.051  & 0.046  & 0.050  & 0.059  & 0.003  & 0.017  & 0.021  & 0.047  & 0.057  & 0.932  & 0.552  & 0.534  \\
		& $ p $=400 & 0.055  & 0.055  & 0.052  & 0.053  & 0.000  & 0.011  & 0.018  & 0.031  & 0.061  & 0.894  & 0.387  & 0.343  \\
		& $ p $=800 & 0.052  & 0.058  & 0.061  & 0.048  & 0.001  & 0.011  & 0.018  & 0.037  & 0.055  & 0.844  & 0.233  & 0.188  \\
		&       &       &       &       &       &       &       &       &       &       &       &       &  \\
		$ n $=100 & $ p $=100 & 0.058  & 0.054  & 0.052  & 0.047  & 0.019  & 0.036  & 0.044  & 0.050  & 0.172  & 1.000  & 0.996  & 0.996  \\
		& $ p $=200 & 0.067  & 0.050  & 0.054  & 0.051  & 0.024  & 0.039  & 0.029  & 0.038  & 0.122  & 1.000  & 0.985  & 0.984  \\
		& $ p $=400 & 0.055  & 0.056  & 0.057  & 0.060  & 0.011  & 0.020  & 0.021  & 0.040  & 0.070  & 1.000  & 0.956  & 0.954  \\
		& $ p $=800 & 0.048  & 0.043  & 0.048  & 0.047  & 0.009  & 0.027  & 0.029  & 0.035  & 0.057  & 1.000  & 0.935  & 0.928  \\
		&       &       &       &       &       &       &       &       &       &       &       &       &  \\
		\multicolumn{14}{c}{Example 3(\ref{example3(c)})} \\
		$ n $=50  & $ p $=100 & 0.048  & 0.051  & 0.050  & 0.040  & 0.003  & 0.019  & 0.024  & 0.043  & 0.149  & 1.000  & 1.000  & 1.000  \\
		& $ p $=200 & 0.035  & 0.043  & 0.041  & 0.048  & 0.004  & 0.020  & 0.027  & 0.039  & 0.075  & 1.000  & 1.000  & 1.000  \\
		& $ p $=400 & 0.052  & 0.068  & 0.066  & 0.065  & 0.001  & 0.010  & 0.024  & 0.032  & 0.065  & 1.000  & 1.000  & 1.000  \\
		& $ p $=800 & 0.057  & 0.062  & 0.060  & 0.053  & 0.000  & 0.010  & 0.017  & 0.042  & 0.065  & 1.000  & 1.000  & 1.000  \\
		&       &       &       &       &       &       &       &       &       &       &       &       &  \\
		$ n $=100 & $ p $=100 & 0.067  & 0.063  & 0.065  & 0.040  & 0.020  & 0.037  & 0.030  & 0.036  & 0.459  & 1.000  & 1.000  & 1.000  \\
		& $ p $=200 & 0.057  & 0.048  & 0.049  & 0.064  & 0.020  & 0.036  & 0.042  & 0.057  & 0.197  & 1.000  & 1.000  & 1.000  \\
		& $ p $=400 & 0.051  & 0.051  & 0.048  & 0.050  & 0.013  & 0.029  & 0.032  & 0.037  & 0.100  & 1.000  & 1.000  & 1.000  \\
		& $ p $=800 & 0.046  & 0.057  & 0.052  & 0.057  & 0.016  & 0.039  & 0.036  & 0.057  & 0.074  & 1.000  & 1.000  & 1.000  \\
		&       &       &       &       &       &       &       &       &       &       &       &       &  \\
		\multicolumn{14}{c}{Example 3(\ref{example3(d)})} \\
		$ n $=50  & $ p $=100 & 0.046  & 0.061  & 0.061  & 0.057  & 0.010  & 0.025  & 0.026  & 0.033  & 0.096  & 1.000  & 0.957  & 0.949  \\
		& $ p $=200 & 0.051  & 0.047  & 0.052  & 0.050  & 0.003  & 0.014  & 0.019  & 0.035  & 0.054  & 1.000  & 0.818  & 0.806  \\
		& $ p $=400 & 0.041  & 0.045  & 0.046  & 0.057  & 0.000  & 0.015  & 0.016  & 0.040  & 0.048  & 0.999  & 0.525  & 0.501  \\
		& $ p $=800 & 0.048  & 0.049  & 0.049  & 0.057  & 0.000  & 0.012  & 0.015  & 0.030  & 0.061  & 0.991  & 0.241  & 0.194  \\
		&       &       &       &       &       &       &       &       &       &       &       &       &  \\
		$ n $=100 & $ p $=100 & 0.053  & 0.052  & 0.055  & 0.048  & 0.018  & 0.033  & 0.025  & 0.041  & 0.257  & 1.000  & 1.000  & 1.000  \\
		& $ p $=200 & 0.040  & 0.041  & 0.045  & 0.046  & 0.017  & 0.027  & 0.033  & 0.041  & 0.114  & 1.000  & 1.000  & 1.000  \\
		& $ p $=400 & 0.052  & 0.047  & 0.047  & 0.051  & 0.013  & 0.034  & 0.030  & 0.034  & 0.094  & 1.000  & 1.000  & 1.000  \\
		& $ p $=800 & 0.046  & 0.042  & 0.045  & 0.049  & 0.010  & 0.021  & 0.032  & 0.043  & 0.055  & 1.000  & 1.000  & 1.000  \\
		\bottomrule
	\end{tabular}%
	\label{tab:addlabel}%
\end{table}%

\section{Real Data}\label{section6}
Below are two real datasets, the leaf dataset and the gene transcription dataset, to illustrate the usefulness and effectiveness of the
proposed test methods.
 
\subsection{ Leaf dataset}
A database extracted from digital images of leaf specimens of plant species was considered by \cite{silva2013evaluation} for the evaluation of measures using discriminant analysis and hierarchical clustering. The database contains 16 plant leaf attributes and 171 samples  and is available at \url{http://archive.ics.uci.edu/ml/datasets/Leaf}. An automatic plant recognition system requires a set of discriminating different attributes which is further applied to training statistical models, for instance, generative models in neural networks, and   naturally leading to the issue of testing independence.  We are currently considering  the independence between 7 shape attributes and 7 texture attributes. We chose   the sixth plant species, which includes 8 samples and 14 attributes to implement the tests in Section \ref{section5}. The $p$-values of all tests are presented in Table \ref{table4}. Our power enhancement test $ J_E $ provided strong rejection, indicating a strong dependency relationship between these attributes.  However, the $\mathcal{L}_\infty $-type test methods  except  $M_\varphi$ reluctantly accepted the null hypothesis.   In addition, we further used screening statistics to inspect the desired variables, resulting a total of ten pairs of variables being selected.  We present five pairs of asymmetric relationships in Figure \ref{figure1}   and it can be seen that these attributes are exhibiting strong linearity. This indicates that, as pointed out in Section \ref{section5}, our proposed tests also have a certain ability to detect linear relationships.

\begin{table}[htbp]
	\centering
	\addtolength{\tabcolsep}{-2pt}
	\caption{The test $ p $-value of different methods for leaf dataset }
	\label{table4}
	\begin{tabular}{ccccccc}
		\toprule
		Test method & $ S_r $    & $ S_\rho $  & $ S_\tau $  & $ S_\varphi $ & $ M_\rho $  & $ M_\tau $ \\
		\midrule
		$ p $-value & $ 2.23\times10^{-12} $ & $ 1.43\times10^{-9} $ & $ 1.47\times10^{-16} $ & $ 1.06\times10^{-26} $     & $ 5.17\times10^{-1} $ & $ 5.79\times10^{-2} $ \\
		\midrule
		Test method & $ M_\varphi $ & $ C_\varphi $ & $ J_\xi $   & $ M_\xi $   & $ J_E $    &  \\
		\midrule
		$ p $-value & $ 6.81\times10^{-3} $ & $ 1.06\times10^{-26} $      & $ 1.61\times10^{-12} $ & $ 6.37\times10^{-2} $ & $ <1.06\times10^{-26} $    &  \\
		\bottomrule
	\end{tabular}%
	\label{tab:addlabel}%
\end{table}%

\begin{figure}[!htb]
	\centering
	\includegraphics[width=1.02\columnwidth]{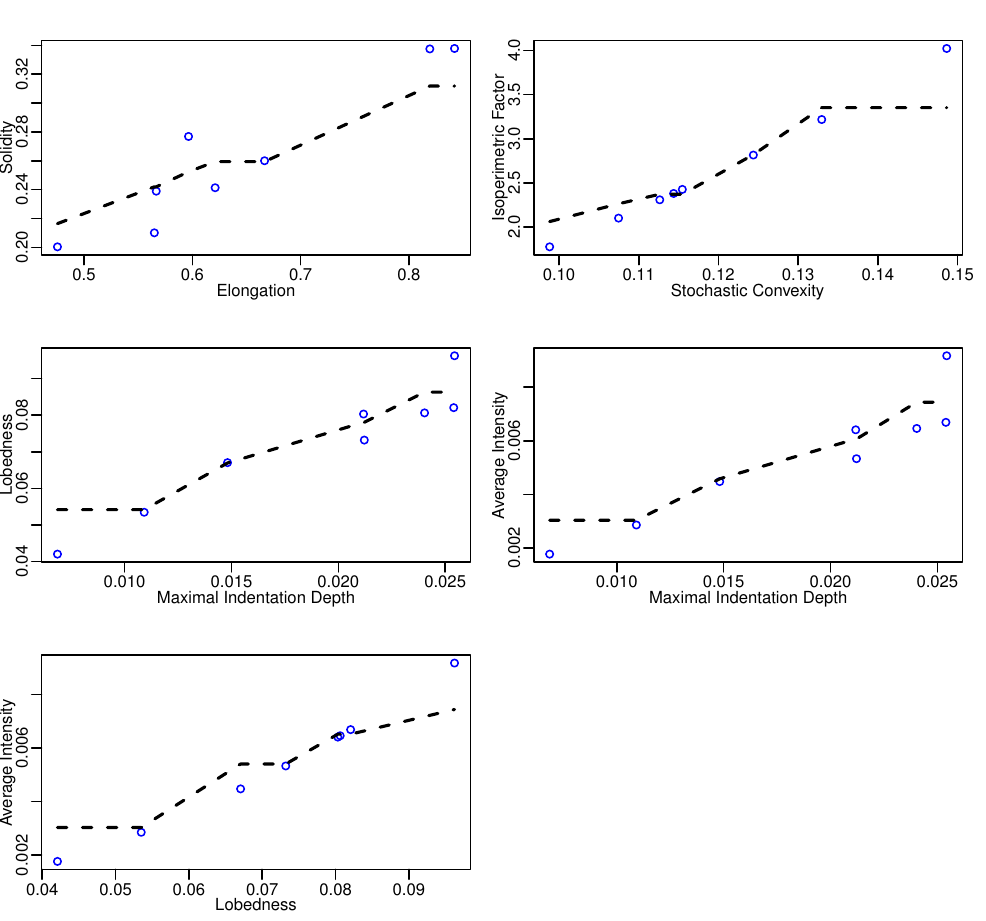}
	\caption{\textit{The dependency relationship of the selected five pairs of attributes in leaf dataset, with attributes as the names of the horizontal and vertical axes. The dashed line represents the curve fitted using the k-nearest neighbor method with k=3.} }
	\label{figure1}
\end{figure}

\subsection{Circadian gene expression dataset}

One oscillator presented in most mammalian tissues is the circadian clock, which allows organisms to align a great variety of rhythmic physiological behaviours between night and day, such as blood pressure, blood hormone levels, food consumption, metabolism  and locomotor activity. Disruption of normal circadian rhythms can lead to clinically numerous pathologies, including metabolic and cardiovascular disorders, neurodegeneration, aging  and cancer.  Remarkably, the circadian clock is judged to to be driven mainly by a transcription-translation feedback loop between several genes and proteins and capable of sustained oscillations outside of the body. These oscillating biological systems are being quantified using genomic techniques, but genomic data is high-dimensional and typically have many more features than observations. When predicting the periodic variable of the circadian clock, studying transcriptional rhythms or performing supervised learning on genomic data, we need to perform a dependence test on all features and identify rhythmic genes with oscillatory transcription levels whose intensity are associated with circadian time.

The circadian gene expression dataset (GSE11923) is from Gene Expression Omnibus (GEO) which is a public functional genomics data repository and data is available at  \url{https://www.ncbi.nlm.nih.gov/geo/}. We applied our proposed three methods and the  eight comparable methods to the dataset. 48 liver samples in GSE11923 were collected from 3-5 mice per time point every hour for 48 hours. For each sample, gene expression was measured for 17920 genes. The mice were initially entrained to a 12:12 hour light:dark schedule for one week, then released into constant darkness for 18 hours and took it as the starting point for the circadian time of the first sample, afterwards, the sample was collected every hour  and the termination time for the 48th sample was 65 hours.

The raw gene expression values were normalized using robust multi array average (RMA) (\cite{irizarry2003exploration}). Due to the overwhelming number of genes, we randomly selected 500 genes from 16649 genes as features (the genes with ties have been removed from the total of 17920 genes)  and added a clock periodic variable (circadian time) with a 24-hour oscillation. This resulted in a 48 $\times$ 501 high-dimensional data matrix for statistical testing. 

The $p$-value of all quadratic test statistics are 0, and the $p$-values of the remaining four extreme value tests $ M_\rho $, $ M_\tau $, $ M_\varphi $  and $ M_\xi $ are $ 1.262\times10^{-8} $, $ 3.170\times10^{-13} $, $ 2.665\times10^{-14} $  and $ 3.560\times10^{-12} $, respectively.  This indicates that there is a strong dependence relationship among all features. In addition, our screening statistic screened out 13 pairs of dependent features, of which seven pairs are related to clock variables. We show six of them in Figure \ref{figure2}.  The remaining seven  pairs exhibit strong oscillatory or linear relationships  and we will no longer showcase  here. From Figure \ref{figure2}, these genes exhibit strong clock oscillations, indicating that the high-dimensional power enhancement test based on Chatterjee coefficient does have excellent applicability for oscillation characteristics.

\begin{figure}[!htb]
	\centering
	\includegraphics[width=1.02\columnwidth]{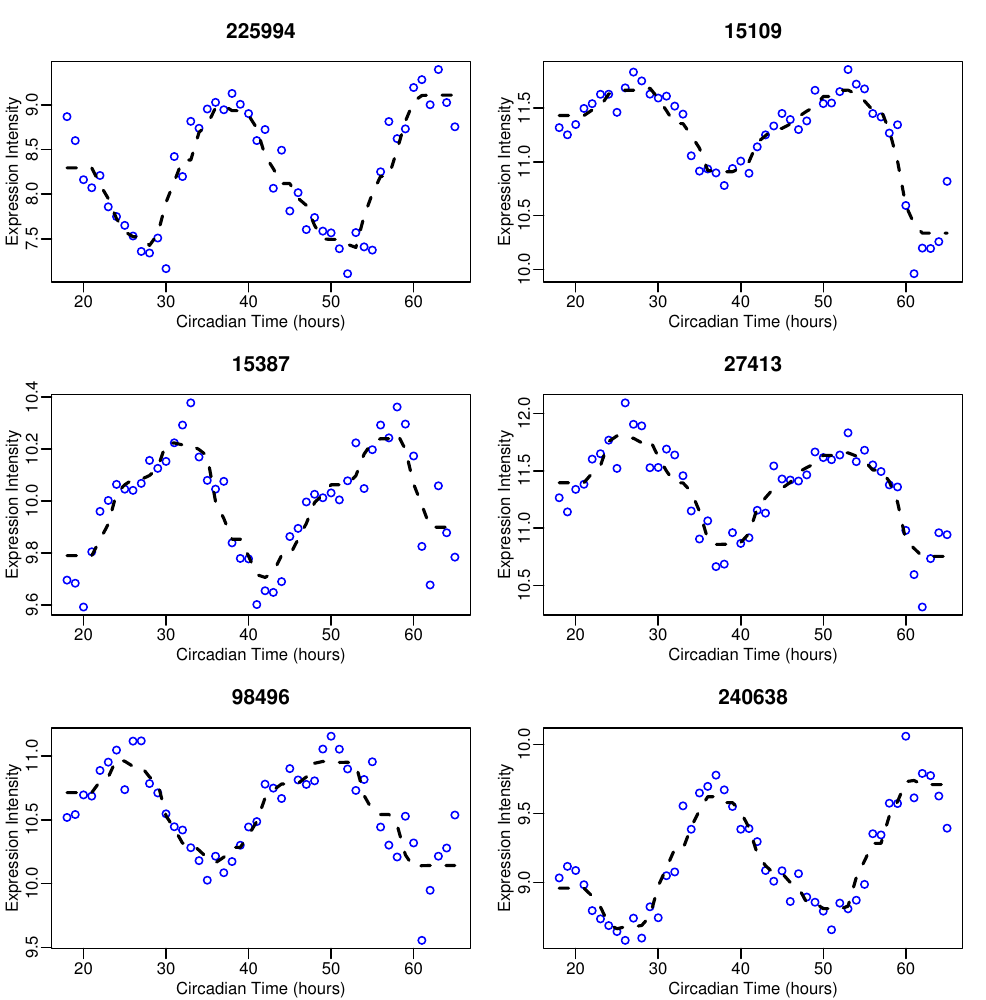}
	\caption{\textit{Gene transcriptional oscillation patterns changing with circadian time.  Each subgraph presents the gene ID and the dashed line represents the curve fitted using the k-nearest neighbor method with k=6.} }
	\label{figure2}
\end{figure}

\section{Discussion}\label{section7}
Chatterjee's rank correlation coefficient, as a newly developed coefficient, can be used to detect the nonlinear dependence between two scale  variables, especially oscillatory dependencies. The extension of Chatterjee's correlation coefficient to high-dimensional independence test is necessary and significant although the deriving of high-order moments and covariance is highly challenging. In this article, we focus on high-dimensional independence test based on Chatterjee's correlation coefficient. To this end,  we propose the quadratic test and extreme value test which  follow normal distribution and Gumbel distribution  under $ H_0 $, respectively. In addition, the consistencies of the proposed two test methods are established. However, the $L_2$-type and $L_\infty$-type statistics only perform well for dense and sparse alternative problems in high dimensions, respectively. In order to balance the drawbacks of these two tests, we add a screening statistic on the basis of the quadratic statistic  and propose a power enhancement test. Under mild conditions, we show that the resulting power is not lower than the quadratic test under   dense alternative hypotheses, and can be significantly enhanced under sparse alternative hypotheses. The proposed test methods are based on rank correlation and so that they have the following advantages:
 \begin{itemize}
\item[1.] The proposed tests are distribution free;
\item[2.] The proposed tests are more robust than tests based on the Pearson correlation coefficients; 
\item[3.] The proposed tests do  not include tuning parameters;
\item[4.] The proposed tests are  easy to implement using existing software programs like R, matlab, and so on. 
 \end{itemize}

\appendix
\section{Appendix}\label{appendix}
\begin{lemma}\label{lemmaA.1}
	Let $\Phi(\cdot)$ be the cumulative distribution function of standard normal distribution, under $ H_0 $, for any $k\neq l$ and $x\in  \mathbb{R}  $, there exists constant  $ C>0 $ such that
	$$ \sup_{x\in \mathbb{R}}\left|\P\left(\frac{1}{\sqrt{u_{n}}}\hat{\xi}_{kl} \leqslant x\right)-\Phi(x)\right|=C L_{n}^{1 /5}, $$
	where $ L_{n}=\dfrac{3(224n-225)}{14(4n-3)^2} . $
\end{lemma}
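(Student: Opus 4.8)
The plan is to read the displayed identity as a Berry--Esseen bound (the ``$=$'' should be ``$\leqslant$'') for the standardized statistic $W_n:=\hat{\xi}_{kl}/\sqrt{u_n}$, and to recognize $L_n$ as an explicit martingale Lyapunov quantity. The first step is a reduction. Under $H_0$ the vector of concomitant ranks $(R_{[1]l}^k,\dots,R_{[n]l}^k)$ is a uniformly random permutation of $\{1,\dots,n\}$, independent of the order statistics of $X_k$, so $\hat{\xi}_{kl}$ is a functional of this single random permutation. An elementary computation (the expected absolute difference of two distinct ranks is $(n+1)/3$, and there are $n-1$ consecutive pairs) gives $\E\!\sum_{i=1}^{n-1}|R_{[i+1]l}^k-R_{[i]l}^k|=(n^2-1)/3$, hence $\E\hat{\xi}_{kl}=0$. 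Combined with Lemma \ref{lemma2.1} this shows $W_n$ has mean $0$ and \emph{exact} variance $1$, so it remains to bound $\sup_x|\P(W_n\leqslant x)-\Phi(x)|$.

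Second, I would expose a martingale structure. By revealing the observations sequentially and taking the associated Doob decomposition of the centered statistic, one writes $W_n=\sum_i X_{n,i}$ as a sum of martingale differences adapted to the natural filtration $\{\mathcal{F}_i\}$. The exponent $1/5$ and the additive form of $L_n$ are the signature of the Heyde--Brown (1970) martingale Berry--Esseen inequality, whose characteristic exponent is exactly $1/5$:
$$\sup_x\big|\P(W_n\leqslant x)-\Phi(x)\big|\leqslant C\Big(\sum_{i}\E X_{n,i}^4+\E\big|\sum_i\E(X_{n,i}^2\mid\mathcal{F}_{i-1})-1\big|^2\Big)^{1/5}.$$
This reduces the whole problem to two scalar quantities: the sum of fourth moments of the martingale differences, and the $L^2$ fluctuation of the sum of conditional variances about its mean $1$.

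Third, I would evaluate these two quantities in closed form. Both are of order $1/n$ and are governed by the same fourth-order rank moments that already produce the coefficient $224$ in $v_n^2$ of Lemma \ref{lemma2.1} and in $\sigma_{np}^2$ of Lemma \ref{lemma2.2}. Carrying out the (tedious but elementary) bookkeeping for the permutation-based summands shows that the bracketed sum equals $\tfrac{3(224n-225)}{14(4n-3)^2}=L_n$, which yields the stated bound $C L_n^{1/5}$; note $L_n\asymp 3/n$, so the rate is $n^{-1/5}$.

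The hard part will be twofold. First, one must choose a filtration under which the centered Chatterjee statistic decomposes cleanly into martingale differences: the global permutation constraint prevents the summands $|R_{[i+1]l}^k-R_{[i]l}^k|$ from being independent or even $m$-dependent, so no plain Lyapunov or Esseen bound applies and the martingale (or an exchangeable-pair / sequential-sampling) structure is essential. Second, one must compute $\sum_i\E X_{n,i}^4$ and $\E|\sum_i\E(X_{n,i}^2\mid\mathcal{F}_{i-1})-1|^2$ exactly in order to land on the clean rational form $L_n$; this is precisely the brute-force high-order-moment calculation flagged in the introduction. The payoff is that, because $u_n$ and the underlying rank moments are distribution-free and exact, $L_n$ comes out as an explicit function of $n$ with no unspecified constants hidden in the leading order.
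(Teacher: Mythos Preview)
Your high-level diagnosis is correct: the exponent $1/5$ and the additive structure of $L_n$ do come from the Hall--Heyde (Heyde--Brown) martingale Berry--Esseen inequality, and $L_n=\sum_i\E X_{n,i}^4+\E\big|\sum_i\E(X_{n,i}^2\mid\mathcal{F}_{i-1})-1\big|^2$ is exactly the quantity the paper evaluates. However, there is a key step you are missing, and it is precisely the one you flag as ``the hard part'' without resolving.

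The paper does \emph{not} build a martingale decomposition of $\hat{\xi}_{kl}$ itself. Working directly with the random-permutation functional, as you propose, makes the conditional moments intractable and would not land on the clean rational $L_n$. Instead the paper first passes to an auxiliary statistic
\[
\xi_n'=-\frac{3}{n}\sum_{k=1}^{n-1}\Big(|U_k-U_{k+1}|+2U_k(1-U_k)-\tfrac{2}{3}\Big),
\]
where $U_1,\dots,U_n$ are i.i.d.\ $U(0,1)$; this is the Angus (1995) coupling of Chatterjee's statistic. Because the $U_i$ are i.i.d., the Doob decomposition of $\xi_n'/\sigma_n'$ with respect to $\mathcal{F}_l=\sigma(U_1,\dots,U_l)$ yields explicit polynomial martingale differences $Z_l$, and one computes
\[
\sum_{l=1}^n\E Z_l^4=\frac{3(192n-179)}{14(4n-3)^2},\qquad \E\big|V_n^2-1\big|^2=\frac{3(16n-23)}{7(4n-3)^2},
\]
whose sum is $L_n$. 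Hall--Heyde then gives the bound $CL_n^{1/5}$ for $\xi_n'/\sigma_n'$, not for $\hat{\xi}_{kl}/\sqrt{u_n}$. A second, separate step transfers the bound: using Serfling's perturbation lemma and an exact covariance computation between $\xi_n/\sigma_n$ and $\xi_n'/\sigma_n'$, one shows $\E\big(\xi_n/\sigma_n-\xi_n'/\sigma_n'\big)^2=O(1/n)$, hence the extra error is $O(n^{-1/3})$ and is absorbed into $CL_n^{1/5}=O(n^{-1/5})$. Without this coupling-plus-transfer structure your plan has the right destination but no viable route to the explicit $L_n$.
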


\begin{proof}[\textbf{Proof of Lemma \ref{lemmaA.1}.}]  Let $F_l(x)$ be the cumulative distribution function of  $X_l$.
	For presentation convenience, we hide the index $ kl $ and abbreviate the form of $ \hat{\xi}_{kl} $ as
	$$\xi_{n}=1- \dfrac{3}{n^2-1}\sum_{k=1}^{n-1}|R_k-R_{k+1}|.  $$	
	Denote $U_i=F_l(X_{il}),i=1,2,\ldots,n.$  Obviously,  $\{U_i\}_{i=1}^n$  are independent identically distributed from uniform distribution $ U(0,1) $ by Rosenblatt Transformation.   
	Similar to \cite{angus1995coupling}, under $ H_0 $, there exists an asymptotically equivalent representation for $ \xi_{n} $ as follows,
	$$\xi_{n}^{\prime}=-\frac{3}{n}\sum_{k=1}^{n-1}\left(|U_k-U_{k+1}|+2U_k(1-U_k)-\frac{2}{3} \right). $$
	
	Let
	$$\frac{1}{\sigma_{n}}\xi_{n}=\frac{1}{\sigma_{n}}\left(1- \dfrac{3}{n^2-1}\sum_{k=1}^{n-1}|R_k-R_{k+1}| \right)   $$
	and
	$$\frac{1}{\sigma_{n}'}\xi_{n}^{\prime}=-\frac{1}{\sigma''_{n}}\sum_{k=1}^{n-1}\left(|U_k-U_{k+1}|+2U_k(1-U_k)-\frac{2}{3} \right),   $$
	where $ \sigma_{n}^{2} $ and $ \sigma_{n}'^{2} $ are the variances of $ \xi_{n} $ and $ \xi_{n}^{\prime} $, respectively, $ \sigma_{n}^{2}=u_n=\dfrac{(n-2)(4n-7)}{10(n-1)^2(n+1)}$,  $ \sigma_{n}'^{2}=\dfrac{9}{n^2} \sigma_{n}''^{2} $, $ \sigma_{n}''^{2}=\dfrac{2}{45}n- \dfrac{1}{30}$. Then $\E\left( \frac{1}{\sigma_{n}}\xi_{n}\right) = \E\left( \frac{1}{\sigma_{n}'}\xi_{n}^{\prime}\right) =0, \Var\left( \frac{1}{\sigma_{n}}\xi_{n}\right) =\ \Var\left( \frac{1}{\sigma_{n}'}\xi_{n}^{\prime}\right) =1 $.
	
	Next, we use the relevant results of martingales in \cite{hall2014martingale} to obtain the Berry-Esseen bound for $  \dfrac{1}{\sigma_{n}'}\xi_{n}^{\prime}. $		
	Denote  $ \mathcal{F}_{l}=\sigma\left\{U_{1}, \ldots, U_{l}\right\}, l=1, \ldots, n , $ with $  \mathcal{F}_{0}=\emptyset  $, i.e.  $ \mathcal{F}_{l} $  is the $  \sigma $-field generated by  $ \left\{U_{1}, \ldots, U_{l}\right\} $. Set  $ Y_{l}=\mathrm{E}\left(\dfrac{\xi_{n}^{\prime}}{\sigma_{n}'} \Big| \mathcal{F}_{l}\right), Y_{0}=0, Z_{l}=Y_{l}-Y_{l-1},  V_{n}^{2}=\sum_{l=1}^{n} \mathrm{E}\left(Z_{l}^{2} \mid \mathcal{F}_{l-1}\right)  $.
	
	By simple calculation, one has
	$$  
	\frac{1}{\sigma_{n}'}\xi_{n}^{\prime}=\sum_{l=1}^{n}\left[\mathrm{E}\left(\frac{1}{\sigma_{n}'}\xi_{n}^{\prime} \mid \mathcal{F}_{l}\right)-\mathrm{E}\left(\frac{1}{\sigma_{n}'}\xi_{n}^{\prime} \mid \mathcal{F}_{l-1}\right)\right]=\sum_{l=1}^{n}\left(Y_{l}-Y_{l-1}\right)=\sum_{l=1}^{n} Z_{l}=Y_n  
	$$
	and
	$$E(Y_l|\mathcal{F}_{l-1})=E\left( \operatorname{E}(\xi_n''|\mathcal{F}_l)|\mathcal{F}_{l-1}\right)=\operatorname{E}(\xi_n''|\mathcal{F}_{l-1})=Y_{l-1} ,$$
	indicating $Y_l$ is a martingale. Further, $\operatorname{E}(Z_l|\mathcal{F}_{l-1})=\operatorname{E}(Y_l-Y_{l-1}|\mathcal{F}_{l-1})=0$, thus $Z_l$ is a martingale difference. 	For $2\leqslant l\leqslant n-1 $, one has
	$$   Y_{l}=\mathrm{E}\left(\frac{1}{\sigma_{n}'}\xi_{n}^{\prime} \mid \mathcal{F}_{l}\right)=-\frac{1}{\sigma_{n}''}\left( \sum_{k=1}^{l-1}\left[|U_k-U_{k+1}|+2U_k(1-U_k)-\frac{2}{3} \right] + U_l(1-U_l)-\frac{1}{6}\right) .$$
	For $l=1$ and $l=n$, we have 
	$$   Y_{1}=-\frac{1}{\sigma_{n}''}\left( U_1(1-U_1)-\frac{1}{6}\right),  Y_{n}=\frac{1}{\sigma_{n}'}\xi_{n}^{\prime} .$$
	In this way, we can further calculate the martingale difference $ Z_l $ through $ Y_l $.  For $2\leqslant l\leqslant n-1 $, 
	$$Z_{l}=Y_l-Y_{l-1}=-\frac{1}{\sigma_{n}''}\left(U_l(1-U_l)+U_{l-1}(1-U_{l-1})+|U_{l-1}-U_{l}|-\frac{2}{3} \right), \E\left( Z_{l}^4\right) =\dfrac{8}{1575}\frac{1}{\sigma_{n}''^4}. $$
	For $l=1$ and $l=n$, 
	$$Z_1=Y_1,\ \ \E\left( Z_{1}^4\right) =\dfrac{1}{15120}\frac{1}{\sigma_{n}''^4},$$
	and  $$Z_n=Y_n-Y_{n-1}=-\frac{1}{\sigma_{n}''}\left( U_{n-1}(1-U_{n-1})-\frac{1}{2}+ |U_{n-1}-U_{n}|\right),\ \ \E\left( Z_{n}^4\right) = \dfrac{3}{560}\frac{1}{\sigma_{n}''^4}.$$ 
	Further,
	\begin{eqnarray*} 
		\sum_{l=1}^{n}\E\left( Z_{l}^4\right)=\dfrac{1}{15120}\frac{1}{\sigma_{n}''^4}+\dfrac{8}{1575}\frac{1}{\sigma_{n}''^4}(n-2)+ \dfrac{3}{560}\frac{1}{\sigma_{n}''^4}=\left( \dfrac{8}{1575}n-\dfrac{179}{37800}\right) \frac{1}{\sigma_{n}''^4}=\dfrac{3(192n-179)}{14(4n-3)^2} .
	\end{eqnarray*} 	
	Denote $	V_{n}^{2}=\sum_{l=1}^{n} \mathrm{E}\left(Z_{l}^{2} \mid \mathcal{F}_{l-1}\right),$ one has 
	\begin{eqnarray*} 
		V_{n}^{2}&=& \frac{1}{\sigma_{n}''^2}\times\frac{1}{180}+ \frac{1}{\sigma_{n}''^2}\sum_{l=2}^{n-1}\left[ - \dfrac{4}{45} (-1 + 15 U_{l-1}^2 - 30 U_{l-1}^3 + 15 U_{l-1}^4)\right]  \\
		&+& \frac{1}{\sigma_{n}''^2}\times\dfrac{1}{12}(1 - 12 U_{n-1}^2 + 24 U_{n-1}^3 - 12 U_{n-1}^4).
	\end{eqnarray*} 
	It is easy to show that 
	\begin{eqnarray*} 
		\E \left( V_{n}^{2}\right) = \frac{1}{\sigma_{n}''^2}\times\frac{1}{180}+ \frac{1}{\sigma_{n}''^2}\times\dfrac{2}{45}(n-2) 
		+ \frac{1}{\sigma_{n}''^2}\times\frac{1}{20}=\frac{1}{\sigma_{n}''^2}\left( \dfrac{2}{45}n- \dfrac{1}{30}\right) =1 
	\end{eqnarray*} 
	and
	\begin{eqnarray*} 
		\Var \left( V_{n}^{2}\right) &=&\frac{1}{\sigma_{n}''^4}(n-2) \Var\left(  - \dfrac{4}{45} (-1 + 15 U_{l-1}^2 - 30 U_{l-1}^3 + 15 U_{l-1}^4)\right)\\
		&+& \frac{1}{\sigma_{n}''^4}\Var\left( \dfrac{1}{12}(1 - 12 U_{n-1}^2 + 24 U_{n-1}^3 - 12 U_{n-1}^4)\right)\\
		&=& \frac{1}{\sigma_{n}''^4}(n-2)\left(\dfrac{8}{2835}-\left( \dfrac{2}{45}\right)^2 \right)+\frac{1}{\sigma_{n}''^4}\left(\dfrac{1}{336} -\left(\dfrac{1}{20}^2 \right) \right) \\
		&=&\frac{1}{\sigma_{n}''^4}(n-2)\dfrac{4}{4725}+\dfrac{1}{2100}\frac{1}{\sigma_{n}''^4}=\frac{1}{\sigma_{n}''^4}\left( \dfrac{4}{4725}n-\dfrac{23}{18900}\right)=\dfrac{3(16n-23)}{7(4n-3)^2} .
	\end{eqnarray*} 
	Thus, based on the above calculation results, one has
	\begin{eqnarray*} 
		\E\left| V_n^2-1\right| ^2=\Var\left( V_n^2-1\right) +\left( \E\left(  V_n^2-1\right)  \right) ^2=\Var\left( V_n^2\right)=\dfrac{3(16n-23)}{7(4n-3)^2}.
	\end{eqnarray*} 
	
	Let $ L_{n}:=\sum_{i=1}^{n} E\left|X_{i}\right|^{4}+E\left|V_{n}^{2}-1\right|^{2},$
	one has  $ L_{n}=\dfrac{3(224n-225)}{14(4n-3)^2} . $
	According to Theorem 3.9 in \cite{hall2014martingale}, there exists constant  $ C>0 $ such that  for all  $ x \in \mathbb{R}$,  
	$$ 
	\left|\P\left(\frac{1}{\sigma_{n}'}\xi_{n}^{\prime} \leqslant x\right)-\Phi(x)\right| \leqslant C L_{n}^{1 /5}\left[1+|x|^{16 /5}\right]^{-1}\leqslant C L_{n}^{1 /5}. $$
	Then,
	$$ 
	\sup_{x\in \mathbb{R}}\left|\P\left(\frac{1}{\sigma_{n}'}\xi_{n}^{\prime} \leqslant x\right)-\Phi(x)\right|= C L_{n}^{1 /5}. $$
	
	Next, we will utilize the Berry-Esseen bound for $ \frac{1}{\sigma_{n}'}\xi_{n}^{\prime} $ and Lemma from page 228 of \cite{serfling2009approximation} to calculate the Berry-Esseen bound of $ \frac{1}{\sigma_{n}}\xi_{n} $. For positive constant sequence  $ {a_n} $,
	\begin{eqnarray*}   
		\sup_{x\in \mathbb{R}}\left|\P\left(\frac{1}{\sigma_{n}}\xi_{n} \leqslant x\right)-\Phi(x)\right|&=&\sup_{x\in \mathbb{R}}\left|\P\left(\frac{1}{\sigma_{n}'}\xi_{n}^{\prime}+\left( \frac{1}{\sigma_{n}}\xi_{n}-\frac{1}{\sigma_{n}'}\xi_{n}^{\prime}\right)  \leqslant x\right)-\Phi(x)\right|\\
		&=&C L_{n}^{1 /5}+O\left( a_n\right)+\P\left(\left|\frac{1}{\sigma_{n}}\xi_{n}-\frac{1}{\sigma_{n}'}\xi_{n}^{\prime}\right| >a_n \right)\\
		&\leqslant&C L_{n}^{1 /5} +O\left( a_n\right)+\dfrac{\E\left( \frac{1}{\sigma_{n}}\xi_{n}-\frac{1}{\sigma_{n}'}\xi_{n}^{\prime}\right) ^2}{a_n^2}. 
	\end{eqnarray*} 
	Now, we consider the  expectation of $ \dfrac{\xi_{n} }{\sigma_n} \dfrac{\xi_{n}' }{\sigma_n'} $.  Let $\left[R_{i}^{U}\right]_{i=1}^{n}$ be the ranks of $\left[U_{i}\right]_{i=1}^{n}$. Noting that $\left[R_{i}^{U}\right]_{i=1}^{n}$ and $\left[R_{i}\right]_{i=1}^{n}$ are equal under $ H_0 $, we remove the superscript $U$ of $\left[R_{i}^{U}\right]_{i=1}^{n}$ for notation simplicity in the following proof without causing any ambiguity. Similar to Lemma S8 in the supplementary material of \cite{lin2023boosting} and $ \operatorname{min}(a,b)=\dfrac{a+b-|a-b|}{2} $, it is easy to show that\\
	$ \operatorname{Cov}(R_1,U_1)=\dfrac{n-1}{12} $, \ $ \operatorname{Cov}(R_1,U_2)=-\dfrac{1}{12}   $, $ \operatorname{Cov}(R_1,U_1^2)=\dfrac{n-1}{12} $, \ $ \operatorname{Cov}(R_1,U_2^2)=-\dfrac{1}{12}   $, \ $ \operatorname{Cov}(|R_1-R_2|,|U_1-U_2|)=\dfrac{n-2}{18} $, \
	$ \operatorname{Cov}(|R_1-R_2|,|U_1-U_3|)=\dfrac{n-8}{180} $,\ $ \operatorname{Cov}(|R_1-R_2|,|U_3-U_4|)=-\dfrac{1}{45} $, \ $ \operatorname{Cov}(R_1,|U_1-U_2|)=0 $,
	$ \operatorname{Cov}(R_3,|U_1-U_2|)=0 $, \ $ \operatorname{Cov}(U_1,|R_1-R_2|)=0 $, \
	$ \operatorname{Cov}(U_3,|R_1-R_2|)=0 $, $ \operatorname{Cov}(U_1^2,|R_1-R_2|)=\dfrac{n-2}{180} $, $ \operatorname{Cov}(U_3^2,|R_1-R_2|)=-\dfrac{1}{90} $, $ \operatorname{Cov}(U_1(1-U_1),|R_1-R_2|)=-\dfrac{n-2}{180} $, $ \operatorname{Cov}(U_3(1-U_3),|R_1-R_2|)=\dfrac{1}{90} $. 	
	
	Based on above facts  and with simple calculation, it has
	\begin{eqnarray*} 
		&&\E\left( \dfrac{\xi_{n} }{\sigma_n}\dfrac{\xi_{n}' }{\sigma_n'}\right)=\Cov\left( \dfrac{\xi_{n} }{\sigma_n},\dfrac{\xi_{n}' }{\sigma_n'}\right)\\
		&=&\dfrac{1}{\sigma_n\sigma_n'}\dfrac{9}{n(n^2-1)}\Cov\left(\sum_{k=1}^{n-1}|R_k-R_{k+1}|,\ \sum_{k=1}^{n-1}\left( |U_k-U_{k+1}|+2U_k(1-U_k)\right)  \right)\\
		&=& \dfrac{1}{\sigma_n\sigma_n'}\dfrac{9}{n(n^2-1)}\Cov\left( \sum_{k=1}^{n-1}|R_k-R_{k+1}|,\ \sum_{k=1}^{n-1} |U_k-U_{k+1}|\right) \\
		&+&\dfrac{1}{\sigma_n\sigma_n'}\dfrac{9}{n(n^2-1)}2\Cov\left( \sum_{k=1}^{n-1}|R_k-R_{k+1}|,\ \sum_{k=1}^{n-1} U_k(1-U_k)\right)\\
		&:=& \dfrac{1}{\sigma_n\sigma_n'}\dfrac{9}{n(n^2-1)}V_1+\dfrac{1}{\sigma_n\sigma_n'}\dfrac{9}{n(n^2-1)}2V_2.
	\end{eqnarray*} 
	Straightforward computation yields
	\begin{eqnarray*} 
		&&V_1=\Cov\left( \sum_{k=1}^{n-1}|R_k-R_{k+1}|,\ \sum_{k=1}^{n-1} |U_k-U_{k+1}|\right)\\
		&=& \sum_{k=1}^{n-1}\Cov\left(|R_k-R_{k+1}|,\ \sum_{k=1}^{n-1} |U_k-U_{k+1}|\right)\\
		&=&2\times\left(\Cov\left(|R_1-R_2|,\ |U_1-U_2|\right) +\Cov\left(|R_1-R_2|,\ |U_2-U_3|\right) +(n-3)\Cov\left(|R_1-R_2|,\ |U_3-U_4|\right)  \right) \\
		&+&\sum_{k=2}^{n-2}\left(\Cov\left(|R_1-R_2|,\ |U_1-U_2|\right) +2\Cov\left(|R_1-R_2|,\ |U_2-U_3|\right) +(n-4)\Cov\left(|R_1-R_2|,\ |U_3-U_4|\right)  \right) \\
		&=&2\left( \dfrac{n-2}{18}+1\times\dfrac{n-8}{180}+(n-3)\times\left(- \dfrac{1}{45}\right) \right)+\sum_{k=2}^{n-2}\left( \dfrac{n-2}{18}+2\times\dfrac{n-8}{180}+(n-4)\times\left(- \dfrac{1}{45}\right) \right)\\
		&=&\dfrac{(n-2)(4n-7)}{90} 
	\end{eqnarray*} 
	and 
	\begin{eqnarray*} 
		&&V_2=\Cov\left( \sum_{k=1}^{n-1}|R_k-R_{k+1}|,\ \sum_{k=1}^{n-1} U_k(1-U_k)\right)\\
		&=&\sum_{k=1}^{n-1}\Cov\left(U_k(1-U_k),  \sum_{k=1}^{n-1}|R_k-R_{k+1}|\right)\\
		&=&\Cov\left(U_1(1-U_1), |R_1-R_2|\right)+(n-2)\Cov\left(U_3(1-U_3),\  |R_1-R_2|\right)\\
		&+&\sum_{k=2}^{n-1}\Cov\left(U_k(1-U_k),\  \sum_{k=1}^{n-1}|R_k-R_{k+1}|\right)\\
		&=&\Cov\left(U_1(1-U_1),\  |R_1-R_2|\right)+(n-2)\Cov\left(U_3(1-U_3),\  |R_1-R_2|\right)\\
		&+&\sum_{k=2}^{n-1}\left(2 \Cov\left(U_1(1-U_1),\  |R_1-R_2|\right)+(n-3)\Cov\left(U_3(1-U_3),\  |R_1-R_2|\right)\right) \\
		&=&-\dfrac{n-2}{180}+(n-2)\times\dfrac{1}{90}+(n-2)\left( 2\times\left( -\dfrac{n-2}{180}\right)  +(n-3)\dfrac{1}{90}\right) =-\dfrac{n-2}{180}.
	\end{eqnarray*} 
	Combining $ V_1 $ with $ V_2 $, we have
	\begin{eqnarray*} 
		\E\left( \dfrac{\xi_{n} }{\sigma_n}\dfrac{\xi_{n}' }{\sigma_n'}\right)
		&=&\dfrac{1}{\sigma_n\sigma_n'}\dfrac{9}{n(n^2-1)}V_1+\dfrac{1}{\sigma_n\sigma_n'}\dfrac{9}{n(n^2-1)}2V_2\\
		&=&\dfrac{1}{\sigma_n\sigma_n'}\dfrac{9}{n(n^2-1)}\times\dfrac{2(n-2)^2}{45} \\
		&=&\sqrt{1-\dfrac{72n^2-211n+149}{(4n-7)(4n-3)(n+1)}}.	 
	\end{eqnarray*} 
	Set $ a_n^3=\E\left( \frac{1}{\sigma_{n}}\xi_{n}-\frac{1}{\sigma_{n}}\xi_{n}^{\prime}\right) ^2, $  one has 
	$a_n^3=2-2\E\left( \dfrac{\xi_{n} }{\sigma_n}\dfrac{\xi_{n}' }{\sigma_n'}\right)= O\left(\frac{1}{n} \right) $ and $a_n=O\left(n^{-\frac{1}{3}}\right) .$
	
	Ultimately,
	$$ \sup_{x\in \mathbb{R}}\left|\P\left(\frac{1}{\sigma_{n}}\xi_{n}\leqslant x\right)-\Phi(x)\right|=C L_{n}^{1 /5}+O\left( a_n\right)=C L_{n}^{1 /5}. $$
\end{proof}

Lemma \ref{lemmaA.2} presents the uniformity of our quadratic test $ J_\xi $ in dense alternative region $ \boldsymbol{\Xi}( J_\xi) $.
\begin{lemma}\label{lemmaA.2} Under $H_a$, as $ n,p\xrightarrow{}\infty $, the quadratic test statistic $J_\xi $ has high power uniformly on $ \boldsymbol{\Xi}( J_\xi) $,that is, for any $ q\in(0,1) $,
	$$\inf_{\boldsymbol{\xi}_p\in\boldsymbol{\Xi}( J_\xi) } \P(J_\xi>z_q\mid\boldsymbol{\xi}_p)\xrightarrow{}1,$$
	where $ z_q $ is the $ q $th upper quantile of standard normal distribution for  significance level $ q $. 
\end{lemma}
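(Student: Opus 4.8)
The plan is to show that under any $\boldsymbol{\xi}_p \in \boldsymbol{\Xi}(J_\xi)$ the statistic $T_{np}$ is pushed far above its null centering $\mu_{np}$, so that $J_\xi=(T_{np}-\mu_{np})/\sigma_{np}$ diverges in probability and, in particular, exceeds the fixed threshold $z_q$ with probability tending to one. Writing $D:=\E_{H_a}(T_{np})-\mu_{np}$ for the signal, I would decompose
$$ T_{np}-\mu_{np}=\big(T_{np}-\E_{H_a}(T_{np})\big)+D, $$
so that the event $\{J_\xi>z_q\}$ is implied by $\{T_{np}-\E_{H_a}(T_{np})>z_q\sigma_{np}-D\}$. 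The strategy is therefore (i) to lower bound $D$, (ii) to verify that $D$ dominates $z_q\sigma_{np}$, and (iii) to show that the centered fluctuation cannot undo a constant fraction of $D$.

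First I would bound the signal from below. Because $\hat{\xi}_{kl}$ depends only on the sample of the pair $(X_k,X_l)$, any independent pair $(k,l)$ with $\xi_{kl}=0$ is exactly null-distributed, so $\E_{H_a}(\hat{\xi}_{kl}^2)=u_n$ and such pairs contribute nothing to $D$. For a dependent pair, consistency of $\hat{\xi}_{kl}$ for $\xi_{kl}$ together with the alternative-hypothesis limit theory of \cite{lin2022limit} gives, by Jensen, $\E_{H_a}(\hat{\xi}_{kl}^2)\geq(\E_{H_a}\hat{\xi}_{kl})^2=\xi_{kl}^2(1+o(1))$, while $u_n=O(1/n)\to0$ is negligible against $\xi_{kl}^2\geq c_0^2$. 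Hence $D\geq\tfrac12\sum_{k\neq l}\xi_{kl}^2\geq\tfrac{C}{2}\,p^2u_n\delta_{np}^2$ for $n$ large, by the defining inequality of $\boldsymbol{\Xi}(J_\xi)$.

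Next I would compare orders. From Lemma \ref{lemma2.1} and Lemma \ref{lemma2.2}, $u_n\asymp n^{-1}$ and $\sigma_{np}^2=p(p-1)\,O(n^{-2})$, so $\sigma_{np}\asymp p/n$; moreover $\delta_{np}^2=c_p(\log\log n)^2\asymp\log p\,(\log\log n)^2$. Thus $D/\sigma_{np}\gtrsim p\,\log p\,(\log\log n)^2\to\infty$, which shows $z_q\sigma_{np}=o(D)$ and hence $z_q\sigma_{np}-D\leq-D/2$ for all large $n,p$. It then remains to prove $\P\big(T_{np}-\E_{H_a}(T_{np})\leq-D/2\big)\to0$ uniformly, which by Chebyshev reduces to establishing $\Var_{H_a}(T_{np})=o(D^2)=o\big(p^4u_n^2\delta_{np}^4\big)$.

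The hard part is this last variance control under the alternative. Expanding $\Var_{H_a}(T_{np})=\sum_{(k,l),(r,s)}\Cov_{H_a}(\hat{\xi}_{kl}^2,\hat{\xi}_{rs}^2)$, the diagonal and index-sharing terms are each $O(1/n)$ and number $O(p^3)$, contributing $O(p^3/n)$, which is comfortably $o(D^2)$ since $n/p\to0$. The delicate terms are those with four distinct indices: there are $O(p^4)$ of them, so a crude Cauchy--Schwarz bound of $O(1/n)$ per term is far too lossy. The key step is to show these disjoint-index covariances are genuinely of smaller order, using that $\hat{\xi}_{kl}$ and $\hat{\xi}_{rs}$ are built from the separate coordinate pairs $(X_k,X_l)$ and $(X_r,X_s)$; after the leading linearization $\hat{\xi}_{kl}^2\approx\xi_{kl}^2+2\xi_{kl}(\hat{\xi}_{kl}-\xi_{kl})$ their covariance inherits only the weak cross-pair dependence and decays fast enough that the aggregate $O(p^4)$ contribution remains $o\big(p^4u_n^2\delta_{np}^4\big)$. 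Once this covariance estimate is in hand, Chebyshev closes the argument uniformly over $\boldsymbol{\Xi}(J_\xi)$, completing the proof.
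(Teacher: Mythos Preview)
Your route is genuinely different from the paper's, and the gap lies in step (iii). To run Chebyshev you need $\Var_{H_a}(T_{np})=o(D^2)$ uniformly over $\boldsymbol{\Xi}(J_\xi)$, but $\boldsymbol{\Xi}(J_\xi)$ constrains only the pairwise measures $\xi_{kl}$, not the joint law of $(X_1,\ldots,X_p)$. Under the alternative, $(X_k,X_l)$ and $(X_r,X_s)$ can be strongly dependent as blocks even when $\{k,l\}\cap\{r,s\}=\emptyset$, so $\Cov_{H_a}(\hat\xi_{kl},\hat\xi_{rs})$ need not vanish; Cauchy--Schwarz gives only $O(1/n)$, and your linearization still leaves remainder terms $\Cov\big((\hat\xi_{kl}-\mu_{kl})^2,(\hat\xi_{rs}-\mu_{rs})^2\big)$ and cross terms that you have not bounded. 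Even granting $O(1/n)$ on the linear part, summing $4\mu_{kl}\mu_{rs}\cdot O(1/n)$ over $O(p^4)$ pairs and comparing to $D^2\asymp(\sum\xi_{kl}^2)^2$ requires a further Cauchy--Schwarz step and the defining lower bound on $\sum\xi_{kl}^2$---none of which appears in your sketch. A secondary issue: you invoke $\xi_{kl}^2\geq c_0^2$, but that hypothesis belongs to Theorem~\ref{theorem2.2}, not to this lemma; here individual $\xi_{kl}$ may be arbitrarily small.

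The paper avoids alternative-hypothesis moments entirely. It works on the event $B=\big\{\sum_{k\neq l}(\hat\xi_{kl}-\xi_{kl})^2<C^2p^2u_n\delta_{np}^2\big\}$, which by Lemma~\ref{lemma4.1} has probability tending to one uniformly over $\boldsymbol{\Xi}$. On $B$, Cauchy--Schwarz gives $\sum_{k\neq l}(\xi_{kl}-\hat\xi_{kl})\xi_{kl}\leq Cp\,u_n^{1/2}\delta_{np}(\sum\xi_{kl}^2)^{1/2}$, and expanding $\sum\hat\xi_{kl}^2$ about $\sum\xi_{kl}^2$ yields the deterministic inequality $\sum\hat\xi_{kl}^2\geq\tfrac12\sum\xi_{kl}^2$ whenever $\sum\xi_{kl}^2\geq 16C^2p^2u_n\delta_{np}^2$. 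Since $\tfrac12\sum\xi_{kl}^2$ dominates $\sigma_{np}z_q+p(p-1)u_n$, the event $\{J_\xi<z_q\}$ is impossible on $B$, and uniformity is immediate. The single concentration input (Lemma~\ref{lemma4.1}, via McDiarmid) replaces your entire variance computation.
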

\begin{proof}[\textbf{Proof of Lemma \ref{lemmaA.2}.}]
	For some constant $ C>1 $, define event $$  B =\left\lbrace\sum_{k\neq l}^{p} \left(\hat{\xi}_{kl}- \xi_{kl}\right) ^2< C^2p^2u_n\delta_{np}^2 \right\rbrace , $$ then by Lemma 4.1, $\inf_{\boldsymbol{\xi}_p \in \boldsymbol{\Xi}}\P\left(B\mid\boldsymbol{\xi}_p \right)\rightarrow1$. On the event $ B $, according to the H\"{o}lder inequality, we have uniformly in $ \boldsymbol{\xi}_p $, 
	\begin{eqnarray*} 
		\sum_{k\neq l}^{p} \left(\xi_{kl}- \hat{\xi}_{kl}\right) \xi_{kl} \leqslant \left( \sum_{k\neq l}^{p} \left(\hat{\xi}_{kl}- \xi_{kl}\right) ^2\right) ^{1/2}\left( \sum_{k\neq l}^{p} \xi_{kl}^2\right) ^{1/2}\leqslant  Cpu_n^{1/2}\delta_{np} \left( \sum_{k\neq l}^{p} \xi_{kl}^2\right) ^{1/2}.
	\end{eqnarray*}
	Therefore,  when $ \sum_{k\neq l}^{p} \xi_{kl}^2\geqslant 16C^2p^2u_n\delta_{np}^2 $,
	\begin{eqnarray*} 
		\sum_{k\neq l}^{p} \hat{\xi}_{kl}^2&=&\sum_{k\neq l}^{p}\left[\left(\hat{\xi}_{kl}- \xi_{kl}\right)^2+\xi_{kl}^2+2\left(\hat{\xi}_{kl}- \xi_{kl}\right)\xi_{kl}  \right]\\
		&\geqslant&\sum_{k\neq l}^{p}\left[\xi_{kl}^2+2\left(\hat{\xi}_{kl}- \xi_{kl}\right)\xi_{kl}  \right]\\ 
		&\geqslant& \sum_{k\neq l}^{p}\xi_{kl}^2-2Cpu_n^{1/2}\delta_{np}\left( \sum_{k\neq l}^{p}\xi_{kl}^2\right) ^{1/2} \\
		&\geqslant& \frac{1}{2} \sum_{k\neq l}^{p}\xi_{kl}^2.
	\end{eqnarray*}
	With $ \sum_{k\neq l}^{p} \xi_{kl}^2\geqslant 16C^2p^2u_n\delta_{np}^2 $, $ \sigma_{np}=O(\frac{p}{n}) $ and $ u_n=O(\frac{1}{n}) $, it further follows that
	\begin{eqnarray*} 
		\sup_{\boldsymbol{\xi}_p\in\boldsymbol{\Xi}( J_\xi)}\P\left(J_\xi<z_q \mid \boldsymbol{\xi}_p\right)&=&\sup_{\boldsymbol{\xi}_p\in\boldsymbol{\Xi}( J_\xi)} \P\left( \sum_{k\neq l}^{p} \hat{\xi}_{kl}^2<\sigma_{np}z_q+p(p-1)u_n \right) \\
		&\leqslant&\sup_{\boldsymbol{\xi}_p\in\boldsymbol{\Xi}( J_\xi)} \P\left(  \frac{1}{2} \sum_{k\neq l}^{p}\xi_{kl}^2<\sigma_{np}z_q+p(p-1)u_n\right)\\
		&\rightarrow0&. 
	\end{eqnarray*}
	Ultimately, $ 	\inf_{\boldsymbol{\xi}_p\in\boldsymbol{\Xi}( J_\xi)}\P\left(J_\xi>z_q\mid \boldsymbol{\xi}_p\right) \rightarrow1. $
\end{proof}

\begin{proof}[\textbf{Proof of Lemma 4.1.}]
	Using the Bonferroni inequality,  uniformly for $ \boldsymbol{\xi}_p \in \boldsymbol{\Xi} $,
	\begin{eqnarray*}
		\P\left(\max _{1\leqslant k\neq l \leqslant p}\left|\hat{\xi}_{kl}-\xi_{kl}\right| > u_{n}^{1 / 2}\delta_{np} \right)
		&\leqslant&\sum\limits_{ 1\leqslant k\neq l \leqslant p }  \P\left(\left|\hat{\xi}_{kl}-\xi_{kl}\right| > u_{n}^{1 / 2}\delta_{np} \right)\\ 
		&\leqslant& p(p-1)\max _{1\leqslant k\neq l \leqslant p} \P\left(\left|\hat{\xi}_{kl}-\xi_{kl}\right| > u_{n}^{1 / 2}\delta_{np} \right)\\
		&\leqslant& p(p-1)\max _{1\leqslant k\neq l \leqslant p} \P\left(\left|\hat{\xi}_{kl}-\E\hat{\xi}_{kl}\right| > u_{n}^{1 / 2}\delta_{np}/2 \right)\\
		&+& p(p-1)\max _{1\leqslant k\neq l \leqslant p} \P\left(\left|\E\hat{\xi}_{kl}-\xi_{kl}\right| > u_{n}^{1 / 2}\delta_{np}/2 \right).
	\end{eqnarray*}
	
	We first handle the first term on the right side of the above inequality.  Rewrite $ \hat{\xi}_{kl} $ as
	\begin{eqnarray*}
		\hat{\xi}_{kl}=1-\dfrac{3\sum^{n-1}_{i=1}|R_{[i+1]l}^k-R_{[i]l}^k|}{n^2-1}=1-\dfrac{3\sum^{n}_{i=1}|R_{il}-R_{N_k(i),l}|}{n^2-1}, 1\leqslant k \neq l\leqslant p,
	\end{eqnarray*}
	where $ N_k(i) $ is the unique index $ j $ such that $ X_{jk} $ is immediately to the right of $ X_{ik} $ if we arrange the $ X_k $'s in increasing order. If there is no such $ j $, set $ N_k(i)=i. $ 
	
	For each $  t \in \mathbb{R} $, let
	$ F_{l}^n(t):=\frac{1}{n} \sum_{i=1}^{n} I_{\left\{X_{il} \leqslant t\right\}}$ be the empirical  cumulative distribution function of $ X_l .$ Obviously $  R_{il}=nF_{l}^n\left(X_{il}\right).  $ Set $ G_{l}^n(t):=\frac{1}{n} \sum_{i=1}^{n} I_{\left\{X_{il} \geqslant t\right\}}$, $  L_{il}=nG_{l}^n\left(X_{il}\right) $, and define
	$$ 
	Q_{kl}^n:=\frac{1}{n} \sum_{i=1}^{n} \min \left\{F_{l}^n\left(X_{il}\right), F_{l}^n\left(X_{N_k(i),l}\right)\right\}-\frac{1}{n} \sum_{i=1}^{n} G_{l}^n\left(X_{il}\right)^{2} . $$
	$$S_{kl}^n :=\frac{1}{n^3}\sum_{i=1}^{n}L_{il}\left( n-L_{il}\right). $$ 
	Some elementary calculation shows  that 
	$$ \frac{Q_{kl}^n}{S_{kl}^n}=\hat{\xi}_{kl}+\frac{ R_{nl}- R_{1l}}{2n^2S_{kl}^n},\ \ \ \ S_{kl}^n =\frac{(n+1)(2n+1)}{6n^2} .$$ 
	Invoking Lemma A.11 in the supplementary material of \cite{chatterjee2021new}, for any $ n $ and any  $ t \geqslant 0 $, there is a positive universal constant $  C  $ such that 
	$$  
	\P\left(\left|Q_{kl}^n-\E\left(Q_{kl}^n\right)\right| \geqslant t\right) \leqslant 2 e^{-C n t^{2}},  $$
	which follows by the bounded difference concentration inequality  in \cite{mcdiarmid1989method}.
	
	Since $ S_{kl}^n $ converges to a nonzero constant, then 
	$$  
	\P\left(\left|\frac{Q_{kl}^n}{S_{kl}^n}-\E\left(\frac{Q_{kl}^n}{S_{kl}^n}\right)\right| \geqslant t\right) \leqslant 2 e^{-C n t^{2}} ,  $$ 
	which is further equivalent to 
	$$  
	\P\left(\left|\hat{\xi}_{kl}-\E\hat{\xi}_{kl}+\frac{ R_{nl}- R_{1l}}{2n^2S_{kl}^n}\right| \geqslant t\right) \leqslant 2 e^{-C n t^{2}} .  $$	
	With a simple derivation,
	\begin{eqnarray*}
		\P\left(\left|\hat{\xi}_{kl}-\E\hat{\xi}_{kl}+\frac{ R_{nl}- R_{1l}}{2n^2S_{kl}^n}\right| \geqslant t\right)&\leqslant&\P\left(\left|\hat{\xi}_{kl}-\E\hat{\xi}_{kl}\right|+\left|\frac{ R_{nl}- R_{1l}}{2n^2S_{kl}^n}\right| \geqslant t\right)\\
		&\leqslant&\P\left(\left|\hat{\xi}_{kl}-\E\hat{\xi}_{kl}\right| \geqslant t/2\right)+\P\left(\left|\frac{ R_{nl}- R_{1l}}{2n^2S_{kl}^n}\right| \geqslant t/2\right).
	\end{eqnarray*}
	Taking $ t=\delta_{np} u_n^{1 / 2}$ and noting that $ \left|\frac{ R_{nl}- R_{1l}}{2n^2S_{kl}^n}\right|=O_p(n^{-2})  $ and $ \delta_{np} u_n^{1 / 2}=O\left(\left(\dfrac{\log p}{n}\right)^{1/2}  \log\log n\right) $,  one has  $ \P\left(\left|\frac{ R_{nl}- R_{1l}}{2n^2S_{kl}^n}\right| \geqslant \delta_{np} u_n^{1 / 2}/2\right)=0$.
	Based on the above discussions, when $ n $ and $ p $ are sufficiently large,
	\begin{eqnarray*}
		\P\left(\left|\hat{\xi}_{kl}-\E\hat{\xi}_{kl}\right| \geqslant \delta_{np} u_n^{1 / 2}/2\right) \leqslant 2 e^{-C n \delta_{np}^2u_n}.
	\end{eqnarray*}
	Further verification reveals that as $ n,  p\xrightarrow{}\infty $,
	\begin{eqnarray*}
		p(p-1)\P\left(\left|\hat{\xi}_{kl}-\E\hat{\xi}_{kl}\right| > u_{n}^{1 / 2}\delta_{np} \right)\leqslant 2 p(p-1)e^{-C n \delta_{np}^2u_n}\rightarrow 0.
	\end{eqnarray*}
	
	Next, we consider the second term. By  Proposition 1.1 in  \cite{lin2022limit},  for any  constant  $\beta>0$ and some constants $ C $, one has 
	\begin{eqnarray*}
		\left|\mathrm{E}\hat{\xi}_{kl}-\xi_{kl}\right|\leqslant\frac{C(\log n)^{\beta+3}}{n}< u_{n}^{1 / 2}\delta_{np}/2 .
	\end{eqnarray*}
	Therefore, for large $n$, one has  
	$$  \P\left(\left|\E\hat{\xi}_{kl}-\xi_{kl}\right| > u_{n}^{1 / 2}\delta_{np}/2 \right)=0. $$ 
	Combining the above results, one has
	$$  \P\left(\max _{1\leqslant k\neq l \leqslant p}\left|\hat{\xi}_{kl}-\xi_{kl}\right| > u_{n}^{1 / 2}\delta_{np} \right)\rightarrow0. $$ 
	Ultimately, uniformly for $ \boldsymbol{\xi}_p \in \boldsymbol{\Xi}$,
	$$\P\left(\max _{1\leqslant k\neq l \leqslant p}\left|\hat{\xi}_{kl}-\xi_{kl}\right| / u_{n}^{1 / 2}<\delta_{np}\right) \rightarrow 1. $$
\end{proof}

\begin{proof}[\textbf{Proof of Lemma 2.1.}]
	For notational simplicity, we abbreviate the symbol $ R_{[i]l}^k $ to $R_i$. To avoid ambiguity, we only use it in the proofs of Lemma 2.1 and Lemma 2.2. Therefore, $ \hat{\xi}_{kl} $ can be rewritten as
	\begin{eqnarray*}
		\hat{\xi}_{kl}=1-\dfrac{3}{n^2-1}\sum^{n-1}_{i=1}A_i,
	\end{eqnarray*}
	where $A_i=|R_{i+1}-R_i|$.
	
	Next, we will handle the relevant results of $ \hat{\xi}_{kl} $ under $H_0$.
	Some elementary calculations show that
	\begin{eqnarray*}
		\E\left( \sum^{n-1}_{i=1}A_i\right)=\dfrac{n^2-1}{3}.
	\end{eqnarray*}
	Consequently, one has 
	$$\E(\hat{\xi}_{kl})=0.$$
	According to Lemma 2 in \cite{zhang2023asymptotic}, one has  $$\Var(\hat{\xi}_{kl})=\E(\hat{\xi}_{kl}^2)=\dfrac{(n-2)(4n-7)}{10(n-1)^2(n+1)}.$$
	With the help of this equation, we can further obtain
	\begin{eqnarray*}
		\E\left( \sum^{n-1}_{i=1}A_i\right)^2 =\dfrac{(n+1)(10n^3-6n^2-25n+24)}{90}.
	\end{eqnarray*}
	
	We  provide the following facts for calculating $ \E\left( \sum^{n-1}_{i=1}A_i\right)^3 $. Going forward, let $ C_n^m  $ represent the number of combinations of $ m $ elements selected from $ n $ elements, and $  A_n^m $ represent the number of permutations of $ m $ elements selected from $ n $ elements. By some calculations, we can derive that
	\begin{eqnarray*}
		\E\left(  A_1^3\right)=\frac{1}{A_n^2}\sum_{i_1\neq i_2}^n\left|i_1-i_2 \right|^3=\dfrac{(n+1)(3n^2-2)}{30} , 
	\end{eqnarray*}
	\begin{eqnarray*}
		\E \left( A_1^2A_2\right)=\frac{1}{A_n^3}\sum_{i_1\neq\dots\neq i_3}^n\left|i_1-i_2 \right|^2\left|i_2-i_3 \right|=\dfrac{(n+1)(11n^2+4n-6)}{180},  
	\end{eqnarray*}
	\begin{eqnarray*}
		\E \left( A_1^2A_3\right)=\frac{1}{A_n^4}\sum_{i_1\neq\dots\neq i_4}^n\left|i_1-i_2 \right|^2\left|i_3-i_4 \right|=\dfrac{(n+1)^2(5n-2)}{90} , 
	\end{eqnarray*}
	\begin{eqnarray*}
		\E\left( A_1A_2A_3\right)=\frac{1}{A_n^4}\sum_{i_1\neq\dots\neq i_4}^n\left|i_1-i_2 \right|\left|i_2-i_3 \right|\left|i_3-i_4 \right|=\dfrac{(n+1)(51n^2+59n-2)}{1260} , 
	\end{eqnarray*}
	\begin{eqnarray*}
		\E\left( A_1A_2A_4\right)=\frac{1}{A_n^5}\sum_{i_1\neq\dots\neq i_5}^n\left|i_1-i_2 \right|\left|i_2-i_3 \right|\left|i_4-i_5 \right|	=\frac{(n+1)(49n^2+59n+6)}{1260},
	\end{eqnarray*}
	\begin{eqnarray*}
		\E \left( A_1A_3A_5\right)=\frac{1}{A_n^6}\sum_{i_1\neq\dots\neq i_6}^n\left|i_1-i_2 \right|\left|i_3-i_4 \right|\left|i_5-i_6 \right| =\frac{(n+1)(35n^2+49n+12)}{945}.
	\end{eqnarray*}
	Based on these facts, we have
	\begin{eqnarray*}
		\E\left( \sum^{n-1}_{i=1}A_i\right)^3 
		&=&C_{n-1}^1\E\left(  A_1^3\right) +C_{n-2}^1C_{2}^1C_{3}^2\E \left( A_1^2A_2\right) +C_{n-2}^2C_{2}^1C_{3}^2\E \left( A_1^2A_3\right) \\
		&+&C_{n-3}^1A_3^3\ E\left( A_1A_2A_3\right) +C_{n-3}^2C_2^1A_3^3\ E\left( A_1A_2A_4\right) +C_{n-3}^3A_3^3\E\left(  A_1A_3A_5\right) \\
		&=&\dfrac{(n+1)(70n^5+14n^4-463n^3+397n^2+387n-612)}{1890}.
	\end{eqnarray*}
	
	Now, we consider  $ \E\left( \hat{\xi}_{kl}^4\right) $. 	Some elementary calculation  shows that	
	\begin{eqnarray*}
		\E\left(A_1A_3A_5A_7 \right)=\frac{1}{A_n^8}\sum_{i_1\neq\dots\neq i_8}^n\left|i_1-i_2 \right|\left|i_3-i_4 \right|\left|i_5-i_6 \right|\left|i_7-i_8 \right| 
		=\frac{(n+1)(5n+6)(35n^2+21n-8)}{14175},
	\end{eqnarray*}
	\begin{eqnarray*}
		\E\left(A_1A_2A_4A_6 \right)=\frac{1}{A_n^7}\sum_{i_1\neq\dots\neq i_7}^n\left|i_1-i_2 \right|\left|i_2-i_3 \right|\left|i_4-i_5 \right|\left|i_6-i_7 \right| 
		=\frac{(n+1)(245n^3+401n^2+40n-104)}{18900},
	\end{eqnarray*}
	\begin{eqnarray*}
		\E\left(A_1A_2A_4A_5 \right)=\frac{1}{A_n^6}\sum_{i_1\neq\dots\neq i_6}^n\left|i_1-i_2 \right|\left|i_2-i_3 \right|\left|i_4-i_5 \right|\left|i_5-i_6 \right|
		=\frac{(n+1)(3087n^3+4523n^2-218n-1392)}{226800}, 
	\end{eqnarray*}
	\begin{eqnarray*}
		\E\left(A_1A_2A_3A_5 \right)=\frac{1}{A_n^6}\sum_{i_1\neq\dots\neq i_6}^n\left|i_1-i_2 \right|\left|i_2-i_3 \right|\left|i_3-i_4 \right|\left|i_5-i_6 \right| 
		=\frac{(n+1)(153n^3+247n^2+2n-84)}{11340} , 
	\end{eqnarray*}
	\begin{eqnarray*}
		\E\left(A_1A_2A_3A_4 \right)=\frac{1}{A_n^5}\sum_{i_1\neq\dots\neq i_5}^n\left|i_1-i_2 \right|\left|i_2-i_3 \right|\left|i_3-i_4 \right|\left|i_4-i_5 \right| 
		=\frac{(n+1)(319n^3+477n^2-22n-156)}{22680},   
	\end{eqnarray*}
	\begin{eqnarray*}
		\E\left(A_1^2A_3A_5 \right) =\frac{1}{A_n^6}\sum_{i_1\neq\dots\neq i_6}^n\left|i_1-i_2 \right|^2\left|i_3-i_4 \right|\left|i_5-i_6 \right|
		=\frac{(n+1)(35n^3+35n^2-28n-24)}{1890} ,
	\end{eqnarray*}
	\begin{eqnarray*}
		\E\left(A_1^2A_3A_4 \right)=\frac{1}{A_n^5}\sum_{i_1\neq\dots\neq i_5}^n\left|i_1-i_2 \right|^2\left|i_3-i_4 \right|\left|i_4-i_5 \right|
		=\dfrac{(n+1)(49n^3+41n^2-42n-24)}{2520},
	\end{eqnarray*}
	\begin{eqnarray*}
		\E\left(A_1^2A_2A_4 \right)=\frac{1}{A_n^5}\sum_{i_1\neq\dots\neq i_5}^n\left|i_1-i_2 \right|^2\left|i_2-i_3 \right|\left|i_4-i_5 \right|
		=\dfrac{(n+1)(77n^3+63n^2-86n-60)}{3780} ,
	\end{eqnarray*}
	\begin{eqnarray*}
		\E\left(A_1A_2^2A_3 \right)=\frac{1}{A_n^4}\sum_{i_1\neq\dots\neq i_4}^n\left|i_1-i_2 \right|\left|i_2-i_3 \right|^2\left|i_3-i_4 \right|
		=\dfrac{(n+1)(14n^3+12n^2-17n-12)}{630},  
	\end{eqnarray*}
	\begin{eqnarray*}
		\E\left(A_1^2A_2A_3 \right)=\frac{1}{A_n^4}\sum_{i_1\neq\dots\neq i_4}^n\left|i_1-i_2 \right|^2\left|i_2-i_3 \right|\left|i_3-i_4 \right|
		=\dfrac{(n+1)(53n^3+45n^2-54n-32)}{2520} , 
	\end{eqnarray*}
	\begin{eqnarray*}
		\E\left(A_1^2A_2^2 \right)=\frac{1}{A_n^3}\sum_{i_1\neq\dots\neq i_3}^n\left|i_1-i_2 \right|^2\left|i_2-i_3 \right|^2
		=\dfrac{n(n+1)(2n^2-3)}{60}, 
	\end{eqnarray*}
	\begin{eqnarray*}
		\E\left(A_1^2A_3^2 \right)=\frac{1}{A_n^4}\sum_{i_1\neq\dots\neq i_4}^n\left|i_1-i_2 \right|^2\left|i_3-i_4 \right|^2
		=\dfrac{n(n+1)(n-1)(5n+6)}{180}, 
	\end{eqnarray*}
	\begin{eqnarray*}
		\E\left(A_1^3A_3 \right)=\frac{1}{A_n^4}\sum_{i_1\neq\dots\neq i_4}^n\left|i_1-i_2 \right|^3\left|i_3-i_4 \right|
		=\dfrac{(n+1)(21n^3+9n^2-32n-16)}{630}, 
	\end{eqnarray*}
	\begin{eqnarray*}
		\E\left(A_1^3A_2 \right)=\frac{1}{A_n^3}\sum_{i_1\neq\dots\neq i_3}^n\left|i_1-i_2 \right|^3\left|i_2-i_3 \right|
		=\dfrac{(n+1)(16n^3+4n^2-25n-8)}{420},  
	\end{eqnarray*}
	\begin{eqnarray*}
		\E\left(A_1^4 \right)=\frac{1}{A_n^2}\sum_{i_1\neq i_2}^n\left|i_1-i_2 \right|^4
		=\dfrac{n(n+1)(2n^2-3)}{30}.  
	\end{eqnarray*}
	
	Based on these facts, although there are many types of expectations that need to be calculated, we can still obtain
	\begin{eqnarray*}
		&&\E\left( \sum^{n-1}_{i=1}A_i\right)^4\\
		&=&C_{n-1}^1\E\left( A_1^4\right)+C_{n-2}^1C_2^1C_4^3\E\left(A_1^3A_2 \right) +C_{n-2}^2C_2^1C_4^3\E\left(A_1^3A_3 \right)+C_{n-2}^1C_4^2\E\left(A_1^2A_2^2 \right)\\ &+&C_{n-2}^2C_4^2\E\left(A_1^2A_3^2 \right)	+C_{n-3}^1\frac{A_4^4}{A_2^2}\E\left(A_1A_2^2A_3 \right)+C_{n-3}^1C_2^1\frac{A_4^4}{A_2^2}\E\left(A_1^2A_2A_3 \right) +C_{n-3}^2C_4^1\frac{A_4^4}{A_2^2}\E\left(A_1^2A_2A_4 \right)\\ &+&C_{n-3}^2C_2^1\frac{A_4^4}{A_2^2}\E\left(A_1^2A_3A_4 \right)+C_{n-3}^3C_3^1\frac{A_4^4}{A_2^2}\E\left(A_1^2A_3A_5 \right)+C_{n-4}^1A_4^4\E\left(A_1A_2A_3A_4 \right)\\ &+&C_{n-4}^2C_2^1A_4^4\E\left(A_1A_2A_3A_5 \right)+C_{n-4}^2A_4^4\E\left(A_1A_2A_4A_5 \right) +C_{n-4}^3C_3^1A_4^4\E\left(A_1A_2A_4A_6 \right)\\ &+&C_{n-4}^4A_4^4\E\left(A_1A_3A_5A_7 \right) \\
		&=&\dfrac{(n+1)(350n^7+490n^6-4192n^5+1990n^4+10835n^3-14765n^2-5553n+18000)}{28350}.
	\end{eqnarray*}
	Thus, combining the results of $ \E\left( \sum^{n-1}_{i=1}A_i\right)^3 $ and $ \E\left(  \sum^{n-1}_{i=1}A_i\right)^4 $, we have
	\begin{eqnarray*}
		\E\left( \hat{\xi}_{kl}^4\right)&=&\E\left(1-\dfrac{3}{n^2-1}\sum^{n-1}_{i=1}A_i\right)^4 \\
		&=&1-4\times\dfrac{3}{n^2-1}\E\left( \sum_{i=1}^{n-1}A_i\right)+6\times \left(\dfrac{3}{n^2-1} \right) ^2\E\left( \sum_{i=1}^{n-1}A_i\right)^2
		-4\left(\dfrac{3}{n^2-1} \right) ^3\E\left( \sum_{i=1}^{n-1}A_i\right)^3\\
		&+&\left(\dfrac{3}{n^2-1} \right)^4\E\left( \sum_{i=1}^{n-1}A_i\right)^4=\dfrac{3(56n^5-420n^4+1095n^3-925n^2-671n+3250)}{350(n-1)^4(n+1)^3} .
	\end{eqnarray*}
	Ultimately, one can obtain
	\begin{eqnarray*}
		\Var(\hat{\xi}_{kl}^2)&=&\E\left( \hat{\xi}_{kl}^4\right)-\left( \E\left( \hat{\xi}_{kl}^2\right)\right) ^2\\
		&=&\dfrac{224n^5-1792n^4+5051n^3-4969n^2-2458n+18128}{700(n-1)^4(n+1)^3}.
	\end{eqnarray*}
\end{proof}

\begin{proof}[\textbf{Proof of Lemma 2.2.}]
	Similar to the proof of Lemma 2.1, we abbreviate the symbol $ R_{[j]k}^l $ to $S_j.$ Then, 
	\begin{eqnarray*}
		\hat{\xi}_{lk}=1-\dfrac{3\sum^{n-1}_{j=1}|R_{[j+1],k}^l-R_{[j]k}^l|}{n^2-1}=1-\dfrac{3}{n^2-1}\sum^{n-1}_{j=1}B_j,
	\end{eqnarray*}
	where $B_j=|S_{j+1}-S_j|$. Recall that $\hat{\xi}_{kl}=1-\dfrac{3}{n^2-1}\sum^{n-1}_{i=1}A_i,$
	where $A_i=|R_{i+1}-R_i|$.
	
	Our goal here is to calculate the covariance of $\hat{\xi}_{kl}^2$ and $\hat{\xi}_{lk}^2$, 
	with a series of calculation,
	\begin{eqnarray*}
		&&\operatorname{Cov}\left(\hat{\xi}_{kl}^2,\hat{\xi}_{lk}^2 \right) \\
		&&=4\left( \dfrac{3}{n^2-1}\right) ^2\operatorname{Cov}\left(\sum^{n-1}_{i=1}A_i,\ \sum^{n-1}_{j=1}B_j \right)-2\left( \dfrac{3}{n^2-1}\right) ^3 \operatorname{Cov}\left(\sum^{n-1}_{i=1}A_i,\ \left(\sum^{n-1}_{j=1}B_j \right)^2\right)\\
		&&-2\left( \dfrac{3}{n^2-1}\right) ^3\operatorname{Cov}\left(\left(\sum^{n-1}_{i=1}A_i\right)^2,\ \sum^{n-1}_{j=1}B_j \right)+\left( \dfrac{3}{n^2-1}\right)^4\operatorname{Cov}\left(\left(\sum^{n-1}_{i=1}A_i\right)^2,\left(\sum^{n-1}_{j=1}B_j \right)^2\right).
	\end{eqnarray*}
	According to the proof of Lemma \ref{lemma2.1}, we have
	\begin{eqnarray*}
		\E\left( \sum^{n-1}_{i=1}A_i\right)=\E\left( \sum^{n-1}_{j=1}B_j\right)=\dfrac{n^2-1}{3},
	\end{eqnarray*}
	\begin{eqnarray*}
		\E\left( \sum^{n-1}_{i=1}A_i\right)^2 =\E\left( \sum^{n-1}_{j=1}B_j\right)^2 =\dfrac{(n+1)(10n^3-6n^2-25n+24)}{90}.
	\end{eqnarray*}
	To calculate the expectation of $ \left(\sum^{n-1}_{i=1}A_i\right)\left(\sum^{n-1}_{j=1}B_j \right) $, based on Lemma 1 in \cite{zhang2023asymptotic} which provided  an unrefined covariance result and a series of calculations, we can obtain
	\begin{eqnarray*}
		\operatorname{Cov}\left(\sqrt{n} \hat{\xi}_{kl},\sqrt{n}\hat{\xi}_{lk}\right)=\dfrac{(n-2)(2n-3)}{(n+1)^2(n-1)}, 
	\end{eqnarray*}
	furthermore,
	\begin{eqnarray*} \operatorname{E}\left(\sum^{n-1}_{i=1}A_i\right)\left(\sum^{n-1}_{j=1}B_j \right)=\dfrac{(n-1)(n^4+n^3+n^2-8n+6)}{9n}, \end{eqnarray*}
	then,
	\begin{eqnarray*}
		\operatorname{Cov}\left(\sum^{n-1}_{i=1}A_i,\ \sum^{n-1}_{j=1}B_j \right)=\dfrac{(n-2)(n-1)(2n-3)}{9n}.
	\end{eqnarray*}
	
	Next, we calculate the expectations for the remaining three terms.
	
	We first deal with the most tedious $\E\left[ \left(\sum^{n-1}_{i=1}A_i\right)^2\left(\sum^{n-1}_{j=1}B_j \right)^2\right]. $  Straightforward calculation  shows that
	\begin{eqnarray*}
		\E\left[ \left(\sum^{n-1}_{i=1}A_i\right)^2\left(\sum^{n-1}_{j=1}B_j \right)^2\right]&=&\sum^{n-1}_{i=1}\sum^{n-1}_{j=1}\sum^{n-1}_{k=1}\sum^{n-1}_{l=1}\E\left( A_iA_jB_kB_l\right)\\
		&=&\sum^{n-1}_{i=1}\sum^{n-1}_{j=1}\sum^{n-1}_{k=1}\sum^{n-1}_{l=1}\E\left( |R_{i+1}-R_{i}||R_{j+1}-R_{j}||S_{k+1}-S_{k}||S_{l+1}-S_{l}|\right). 
	\end{eqnarray*}\\
	Since indexes $ i $, $ j $, $ k $  and $ l $ affect the expected values, we must divide them into the following six categories.
	
	\textbf{Case 1}: $i\neq j+1$, $j\neq i+1$, $i\neq j,$ $k\neq l+1$, $l\neq k+1$, $k\neq l$.\\
	
	Denote  $\Omega=\{k+1,k,l+1,l\}$ and $\Omega'=\{i+1,i,j+1,j\}$.  Similar to   the proof of Lemma 1 in \cite{zhang2023asymptotic}, in order to  derive $ \operatorname{E}|R_{i+1}-R_{i}||R_{j+1}-R_{j}||S_{k+1}-R_{k}||S_{l+1}-S_{l}| $ by means of  the law of total expectation, we  divide the space of quaternion  $\left(R_{i+1}, R_{i},R_{j+1},R_{j}\right)$  into the following five parts.
	
	(1)There are  $A_4^4$ types that $  \left(R_{i+1}, R_{i},R_{j+1},R_{j}\right)$ takes from   $ \Omega$ with the probability of 
	$\frac{1}{A_{n}^4} $.
	
	Assuming the $ m $-th type occurs as event $  Z_{m}^{1}$ $ (m=1,\cdots,A_4^4)$, given condition $  Z_{m}^{1} $, $  |R_{i+1}-R_{i}||R_{j+1}-R_{j}| $ and $ |S_{k+1}-S_{k}||S_{l+1}-S_{l}|  $ are independent, then
	\begin{eqnarray*}
		&&T_{klij}^{(1)}:=\sum_{m=1}^{A_4^4}\E\left( |R_{i+1}-R_{i}||R_{j+1}-R_{j}||S_{k+1}-S_{k}||S_{l+1}-S_{l}||Z_{m}^{1}\right)\\
		&&=\sum_{m=1}^{A_4^4}\E\left( |R_{i+1}-R_{i}||R_{j+1}-R_{j}||Z_{m}^{1}\right)\times\E\left(|S_{k+1}-S_{k}||S_{l+1}-S_{l}||Z_{m}^{1}\right)\\
		&&=8+4\left(|k-l|^2+|k+1-l||l+1-k| \right)\left(|i-j|^2+|i+1-j||j+1-i| \right),  
	\end{eqnarray*}
	thus, for all $ i, j, k, l $,
	\begin{eqnarray*}
		&&\sum_{\substack{i\neq j+1 \\ j\neq i+1 \\ i\neq j}}^n\sum_{\substack{k\neq l+1 \\ l\neq k+1 \\ k\neq l}}^n	T_{klij}^{(1)}=8\left( \sum_{\substack{i\neq j+1 \\ j\neq i+1 \\ i\neq j}}^n1\right) ^2+4\left( \sum_{\substack{i\neq j+1 \\ j\neq i+1 \\ i\neq j}}^n\left(|i-j|^2+|i+1-j||j+1-i| \right) \right) ^2\\
		&&=8\left[(n-3)(n-2) \right]^2+4\left[\dfrac{(n-3)(n-2)(n^2+n+1)}{3} \right] ^2\\
		&&=\dfrac{4(n-3)^2(n-2)^2(n^4+2n^3+3n^2+2n+19)}{9}.
	\end{eqnarray*}
	(2) There are  $A_4^3C_4^3$ types that any three components of $\left(R_{i+1}, R_{i},R_{j+1},R_{j}\right)$ takes three different values from $ \Omega=\{k+1,k,l+1,l\}$, the remaining elements do  not take any of $ k+1 $, $ k $, $ l+1 $ or $ l $, the probability of each type is
	$ \frac{A_{n-4}^1}{A_{n}^4}  $.
	Assuming the $ m $-th type occurs as event $  Z_{m}^{2} $ $ (m=1,\cdots,A_4^3C_4^3)$, define
	\begin{eqnarray*}
		C_{kl}=\dfrac{1}{A_{n-4}^1}\sum_{i_1\neq \Omega}^{n}|i_1-(k+1)|
		=\dfrac{1}{A_{n-4}^1}\left( \sum_{i_1=1}^{n}|i_1-(k+1)|-1-|l-k|-|k+1-l|\right), 
	\end{eqnarray*}
	\begin{eqnarray*} 
		C_{ij}=\dfrac{1}{A_{n-4}^1}\sum_{j_1\neq \Omega'}^{n}|j_1-(i+1)|=\dfrac{1}{A_{n-4}^1}\left( \sum_{j_1=1}^{n}|j_1-(i+1)|-1-|j-i|-|i+1-j|\right) ,
	\end{eqnarray*}
	\begin{eqnarray*}
		D_{kl}=\dfrac{1}{A_{n-4}^1}\sum_{i_1\neq \Omega}^{n}|i_1-k|=\dfrac{1}{A_{n-4}^1}\left( \sum_{i_1=1}^{n}|i_1-k|-1-|l+1-k|-|k-l|\right), 
	\end{eqnarray*}
	\begin{eqnarray*}
		D_{ij}=\dfrac{1}{A_{n-4}^1}\sum_{j_1\neq \Omega'}^{n}|i_1-i|=\dfrac{1}{A_{n-4}^1}\left( \sum_{j_1=1}^{n}|j_1-i|-1-|j+1-i|-|i-j|\right) ,
	\end{eqnarray*}
	\begin{eqnarray*}
		E_{kl}=\dfrac{1}{A_{n-4}^1}\sum_{i_1\neq \Omega}^{n}|i_1-(l+1)|=\dfrac{1}{A_{n-4}^1}\left( \sum_{i_1=1}^{n}|i_1-(l+1)|-|k-l|-|l+1-k|-1\right), 
	\end{eqnarray*}
	\begin{eqnarray*}
		E_{ij}=\dfrac{1}{A_{n-4}^1}\sum_{j_1\neq \Omega'}^{n}|j_1-(j+1)|=\dfrac{1}{A_{n-4}^1}\left( \sum_{j_1=1}^{n}|j_1-(j+1)|-|i-j|-|j+1-i|-1\right) ,
	\end{eqnarray*}
	\begin{eqnarray*}
		F_{kl}=\dfrac{1}{A_{n-4}^1}\sum_{i_1\neq \Omega}^{n}|i_1-l|=\dfrac{1}{A_{n-4}^1}\left( \sum_{i_1=1}^{n}|i_1-l|-|k+1-l|-|k-l|-1\right) ,
	\end{eqnarray*}
	\begin{eqnarray*}
		F_{ij}=\dfrac{1}{A_{n-4}^1}\sum_{j_1\neq \Omega'}^{n}|i_1-j|=\dfrac{1}{A_{n-4}^1}\left( \sum_{j_1=1}^{n}|j_1-j|-|i+1-j|-|i-j|-1\right) ,
	\end{eqnarray*}
	\begin{eqnarray*}
		\sum_{i_1=1}^{n}|j_1-k|=\dfrac{(k-1)k+(n-k)(n-k+1)}{2},
	\end{eqnarray*}
	\begin{eqnarray*}
		\sum_{i_1=1}^{n}|j_1-(k+1)|=\dfrac{k(k+1)+(n-k-1)(n-k)}{2}.
	\end{eqnarray*}
	Applying the above equations, one has
	\begin{eqnarray*}
		&&T_{klij}^{(2)}:=\sum_{m=1}^{A_4^3C_4^3}\E\left( |R_{i+1}-R_{i}||R_{j+1}-R_{j}||S_{k+1}-S_{k}||S_{l+1}-S_{l}||Z_{m}^{2}\right)\\
		&&=2(C_{kl}+D_{kl}+E_{kl}+F_{kl})(C_{ij} + D_{ij}+E_{ij}+F_{ij}) \\
		&&+\bigg[|i-j|(C_{ij}+D_{ij}+E_{ij}+F_{ij})+|j+1-i|(C_{ij}+F_{ij})+|i+1-j|(D_{ij}+E_{ij})\bigg]\\
		&&\times\bigg[|k-l|(C_{kl}+D_{kl}+E_{kl}+F_{kl})+|l+1-k|(C_{kl}+F_{kl})+|k+1-l|(D_{kl}+E_{kl})\bigg].  
	\end{eqnarray*}
	Thus, for all $ i$ ,  $j$ ,  $k$  and  $l $,
	\begin{eqnarray*}
		&&\sum_{\substack{i\neq j+1 \\ j\neq i+1 \\ i\neq j}}^n\sum_{\substack{k\neq l+1 \\ l\neq k+1 \\ k\neq l}}^n	T_{klij}^{(2)}= 2\left( \sum_{\substack{i\neq j+1 \\ j\neq i+1 \\ i\neq j}}^n\left( C_{ij} + D_{ij}+E_{ij}+F_{ij}\right)\right)^2 \\
		&&+\left( \sum_{\substack{i\neq j+1 \\ j\neq i+1 \\ i\neq j}}^n\left(|i-j|(C_{ij}+D_{ij}+E_{ij}+F_{ij})+|j+1-i|(C_{ij}+F_{ij})+|i+1-j|(D_{ij}+E_{ij})\right)\right)^2 \\
		&&= 2\left[\dfrac{2(n-3)(n-2)(2n+3)}{3} \right]^2 +\left[ \dfrac{2(n-3)(n-2)(7n^2+18n+20)}{15}\right]^2 \\
		&&=\dfrac{4(n-3)^2(n-2)^2(49 n^4+ 252 n^3+ 804 n^2+ 1320 n +850  )}{225}.
	\end{eqnarray*}
	(3) There are  $A_4^2C_4^2$ types that any two of $  \left(R_{i+1}, R_{i},R_{j+1},R_{j}\right)$ take two different values from set $ \Omega=\{k+1,k,l+1,l\}$, the remaining  two elements do  not take any of $ k+1 $, $ k $, $ l+1 $, or $ l $, and the probability of each type is
	$ \frac{A_{n-4}^2}{A_{n}^4}  $.
	Assuming the $ m $-th type occurs as event $  Z_{m}^{3} $ $ (m=1,\cdots,A_4^2C_4^2)$, define
	\begin{eqnarray*}
		G_{kl}=\dfrac{1}{A_{n-4}^2}\sum_{\substack{(i_1,i_2)\neq \Omega\\i_1\neq i_2}}^{n}|i_1-i_2|,\ \ 	G_{ij}=\dfrac{1}{A_{n-4}^2}\sum_{\substack{(j_1,j_2)\neq \Omega'\\j_1\neq j_2}}^{n}|j_1-j_2|,
	\end{eqnarray*}
	\begin{eqnarray*}
		H_{kl}=\dfrac{1}{A_{n-4}^2}\sum_{\substack{(i_1,i_2)\neq \Omega\\i_1\neq i_2}}^{n}|i_1-(k+1)||i_2-k|,\ \ 	H_{ij}=\dfrac{1}{A_{n-4}^2}\sum_{\substack{(j_1,j_2)\neq \Omega'\\j_1\neq j_2}}^{n}|j_1-(i+1)||j_2-i|,
	\end{eqnarray*}
	\begin{eqnarray*}
		I_{kl}=\dfrac{1}{A_{n-4}^2}\sum_{\substack{(i_1,i_2)\neq \Omega\\i_1\neq i_2}}^{n}|i_1-(k+1)||i_2-(l+1)|,\ \ 	I_{ij}=\dfrac{1}{A_{n-4}^2}\sum_{\substack{(j_1,j_2)\neq \Omega'\\j_1\neq j_2}}^{n}|j_1-(i+1)||j_2-(j+1)|,
	\end{eqnarray*}
	\begin{eqnarray*}
		J_{kl}=\dfrac{1}{A_{n-4}^2}\sum_{\substack{(i_1,i_2)\neq \Omega\\i_1\neq i_2}}^{n}|i_1-(k+1)||i_2-l|,\ \ 	J_{ij}=\dfrac{1}{A_{n-4}^2}\sum_{\substack{(j_1,j_2)\neq \Omega'\\j_1\neq j_2}}^{n}|j_1-(i+1)||j_2-j|,
	\end{eqnarray*}
	\begin{eqnarray*}
		K_{kl}=\dfrac{1}{A_{n-4}^2}\sum_{\substack{(i_1,i_2)\neq \Omega\\i_1\neq i_2}}^{n}|i_1-k||i_2-(l+1)|,\ \  K_{ij}=\dfrac{1}{A_{n-4}^2}\sum_{\substack{(j_1,j_2)\neq \Omega'\\j_1\neq j_2}}^{n}|j_1-i||j_2-(j+1)|,
	\end{eqnarray*}
	\begin{eqnarray*}
		L_{kl}=\dfrac{1}{A_{n-4}^2}\sum_{\substack{(i_1,i_2)\neq \Omega\\i_1\neq i_2}}^{n}|i_1-k||i_2-l|,\ \ 	L_{ij}=\dfrac{1}{A_{n-4}^2}\sum_{\substack{(j_1,j_2)\neq \Omega'\\j_1\neq j_2}}^{n}|j_1-i||j_2-j|,
	\end{eqnarray*}
	\begin{eqnarray*}
		M_{kl}=\dfrac{1}{A_{n-4}^2}\sum_{\substack{(i_1,i_2)\neq \Omega\\i_1\neq i_2}}^{n}|i_1-(l+1)||i_2-l|,\ \ 	M_{ij}=\dfrac{1}{A_{n-4}^2}\sum_{\substack{(j_1,j_2)\neq \Omega'}}^{n}|j_1-(j+1)||j_2-j|.
	\end{eqnarray*}
	By applying the above equations, one has
	\begin{eqnarray*}
		&&T_{klij}^{(3)}:=\sum_{m=1}^{A_4^2C_4^2}\E\left( |R_{i+1}-R_{i}||R_{j+1}-R_{j}||S_{k+1}-S_{k}||S_{l+1}-S_{l}||Z_{m}^3\right)\\
		&&=\dfrac{1}{(A_{n-4}^2)^2}\left\lbrace 8G_{kl}G_{ij}+2(2|i-j|+|i+1-j|+|j+1-i|)(H_{kl}+M_{kl})G_{ij}\right. \\
		&&+\left. 2(2|k-l|+|k+1-l|+|l+1-k|)(H_{ij}+M_{ij})G_{kl}\right. \\
		&&+\left. 2(I_{kl}+J_{kl}+K_{kl}+L_{kl})( I_{ij}+J_{ij}+K_{ij}+L_{ij})\right\rbrace.
	\end{eqnarray*}
	For all $ i, j, k$ and  $l $,
	\begin{eqnarray*}
		&&\sum_{\substack{i\neq j+1 \\ j\neq i+1 \\ i\neq j}}^{n-1}\sum_{\substack{k\neq l+1 \\ l\neq k+1 \\ k\neq l}}^{n-1}	T_{klij}^{(3)}\\
		&&=8\left( \sum_{\substack{i\neq j+1 \\ j\neq i+1 \\ i\neq j}}^{n-1}Gij\right)^2+4\left( \sum_{\substack{i\neq j+1 \\ j\neq i+1 \\ i\neq j}}^{n-1}\left( 2|i-j|+|i+1-j|+|j+1-i|\right) Gij\right)\left( \sum_{\substack{i\neq j+1 \\ j\neq i+1 \\ i\neq j}}^{n-1}(H_{ij}+M_{ij})\right) \\
		&&+\left( \sum_{\substack{i\neq j+1 \\ j\neq i+1 \\ i\neq j}}^{n-1}\left( I_{ij}+J_{ij}+K_{ij}+L_{ij}\right)\right) ^2  \\
		&&=8\left[ \dfrac{(n-3)(n-2)(n+1)}{3}\right]^2+2\left[ \dfrac{4(n-3)(n-2)(5n^2+13n+12)}{45}\right]\left[\dfrac{(n-3)(n-2)(7n^2+17n+16)}{30}\right] \\
		&&+\left[ \dfrac{4(n-3)(n-2)(5n^2+13n+12)}{45}\right] ^2  \\
		&&=\dfrac{8(n-3)^2(n-2)^2(205n^4+1048n^3+2536n^2+2934n+1377)}{2025}.
	\end{eqnarray*}
	(4) There are  $A_4^1C_4^1$ types that any element of $  \left(R_{i+1}, R_{i},R_{j+1},R_{j}\right)$ takes one  value from   $ \Omega=\{k+1,k,l+1,l\}$, the remaining three elements do not take any of $ k+1 $, $ k $, $ l+1 $  or $ l $, and the probability of each type is
	$ \frac{A_{n-4}^3}{A_{n}^4}  $.
	Assuming the $ m $-th type occurs as event $  Z_{m}^{4} $ $ (m=1,\cdots,A_4^1C_4^1)$, define
	\begin{eqnarray*}
		O_{kl}=\dfrac{1}{A_{n-4}^3}\sum_{\substack{(i_1,i_2,i_3)\neq \Omega\\i_1\neq i_2\neq i_3}}^{n}|i_1-i_2||i_3-(k+1)|,\ \ 
		O_{ij}=\dfrac{1}{A_{n-4}^3}\sum_{\substack{(j_1,j_2,j_3)\neq \Omega'\\j_1\neq j_2\neq i_3}}^{n}|j_1-j_2||j_3-(i+1)|,
	\end{eqnarray*}
	\begin{eqnarray*}
		P_{kl}=\dfrac{1}{A_{n-4}^3}\sum_{\substack{(i_1,i_2,i_3)\neq \Omega\\i_1\neq i_2\neq i_3}}^{n}|i_1-i_2||i_3-k|,\ \ 
		P_{ij}=\dfrac{1}{A_{n-4}^3}\sum_{\substack{(j_1,j_2,j_3)\neq \Omega'\\j_1\neq j_2\neq i_3}}^{n}|j_1-j_2||j_3-i|,
	\end{eqnarray*}
	\begin{eqnarray*}
		U_{kl}=\dfrac{1}{A_{n-4}^3}\sum_{\substack{(i_1,i_2,i_3)\neq \Omega\\i_1\neq i_2\neq i_3}}^{n}|i_1-i_2||i_3-(l+1)|,\ \ 
		U_{ij}=\dfrac{1}{A_{n-4}^3}\sum_{\substack{(j_1,j_2,j_3)\neq \Omega'\\j_1\neq j_2\neq i_3}}^{n}|j_1-j_2||j_3-(j+1)|,
	\end{eqnarray*}
	\begin{eqnarray*}
		V_{kl}=\dfrac{1}{A_{n-4}^3}\sum_{\substack{(i_1,i_2,i_3)\neq \Omega\\i_1\neq i_2\neq i_3}}^{n}|i_1-i_2||i_3-l|,\ \ 
		V_{ij}=\dfrac{1}{A_{n-4}^3}\sum_{\substack{(j_1,j_2,j_3)\neq \Omega'\\j_1\neq j_2\neq i_3}}^{n}|j_1-j_2||j_3-j|,
	\end{eqnarray*}
	then,
	\begin{eqnarray*}
		&&T_{klij}^{(4)}:=\sum_{m=1}^{A_4^2C_4^2}\E\left( |R_{i+1}-R_{i}||R_{j+1}-R_{j}||S_{k+1}-S_{k}||S_{l+1}-S_{l}||Z_{m}^4\right)\\
		&&=(O_{kl}+P_{kl}+U_{kl}+V_{kl})(O_{ij}+P_{ij}+U_{ij}+V_{ij}).
	\end{eqnarray*}
	For all $ i, j, k$ and  $l $,
	\begin{eqnarray*}
		\sum_{\substack{i\neq j+1 \\ j\neq i+1 \\ i\neq j}}^{n-1}\sum_{\substack{k\neq l+1 \\ l\neq k+1 \\ k\neq l}}^{n-1}	T_{klij}^{(4)}=\left( \sum_{\substack{i\neq j+1 \\ j\neq i+1 \\ i\neq j}}^{n-1}\left[ O_{ij}+P_{ij}+U_{ij}+V_{ij}\right] \right) ^2
		=\left[ \dfrac{2(n-3)(n-2)(10n^2+22n+19)}{45} \right] ^2.
	\end{eqnarray*}
	(5) $ (R_{i+1},R_i,R_{j+1},R_{j}) $ does not take any element in the set $ \{k+1,k,l+1,l\} $ with probability $ \dfrac{A_{n-4}^4}{A_n^4}.  $ Define
	\begin{eqnarray*}
		Q_{kl}=\dfrac{1}{A_{n-4}^4}\sum^n_{\substack{(i_1,i_2,i_3,i_4)\neq (k+1,k,l+1,l)\\i_1\neq i_2\neq i_3\neq i_4}}|i_1-i_2||i_3-i_4|,\ \ 	Q_{ij}=\dfrac{1}{A_{n-4}^4}\sum^n_{\substack{(j_1,j_2,j_3,j_4)\neq \Omega'\\j_1\neq j_2\neq j_3\neq j_4}}|j_1-j_2||j_3-j_4|.
	\end{eqnarray*}
	Then 
	\begin{eqnarray*}
		T_{klij}^{(5)}:=\E\left( |R_{i+1}-R_{i}||R_{j+1}-R_{j}||S_{k+1}-S_{k}||S_{l+1}-S_{l}||Z_2^5\right)=Q_{kl}Q_{ij}.
	\end{eqnarray*}
	For all $ i, j, k$  and  $l $,
	\begin{eqnarray*}
		\sum_{\substack{i\neq j+1 \\ j\neq i+1 \\ i\neq j}}^{n-1}\sum_{\substack{k\neq l+1 \\ l\neq k+1 \\ k\neq l}}^{n-1}	T_{klij}^{(5)}=\left( \sum_{\substack{i\neq j+1 \\ j\neq i+1 \\ i\neq j}}^{n-1}Q_{ij}\right) ^2
		=\left[ \dfrac{(n-3)(n-2)(5n^2+9n+8)}{45} \right] ^2.
	\end{eqnarray*}
	Finally, combining (1), (2), (3), (4) and (5) with the law of total expectation, we can obtain the following result,
	\begin{eqnarray*}
		&&\sum_{\substack{i\neq j+1 \\ j\neq i+1 \\ i\neq j}}^{n-1}\sum_{\substack{k\neq l+1 \\ l\neq k+1 \\ k\neq l}}^{n-1}\E\left( |R_{i+1}-R_{i}||R_{j+1}-R_{j}||S_{k+1}-S_{k}||S_{l+1}-S_{l}|\right)\\
		&&=\sum_{\substack{i\neq j+1 \\ j\neq i+1 \\ i\neq j}}^{n-1}\sum_{\substack{k\neq l+1 \\ l\neq k+1 \\ k\neq l}}^{n-1} \left( T_{klij}^{(1)}\times \dfrac{1}{A_n^4}+T_{klij}^{(2)}\times \dfrac{A_{n-4}^1}{A_n^4}+T_{klij}^{(3)}\times \dfrac{A_{n-4}^2}{A_n^4}+T_{klij}^{(4)}\times \dfrac{A_{n-4}^3}{A_n^4}+T_{klij}^{(5)}\times \dfrac{A_{n-4}^4}{A_n^4}\right) \\
		&&=\dfrac{(n-3)(n-2)(25 n^8- 60 n^7+ 56 n^6- 1194 n^5+ 3587 n^4- 3906 n^3+ 5872 n^2- 1680 n-4500)}{2025 n(n-1)}.
	\end{eqnarray*}
	
	Cases 2-6 are similar to Case 1 and  we only provide the final results  to save space.
	
	\textbf{Case 2}: $i= j+1,\ k\neq l+1, l\neq k+1, k\neq l$,  or $k= l+1,i\neq j+1,j\neq i+1,i\neq j$ or $ j= i+1,k\neq l+1,l\neq k+1,k\neq l,$ or $l= k+1,j\neq i+1, i\neq j+1,i\neq j.$ 
	
	The result of  $i= j+1,\ k\neq l+1, l\neq k+1, k\neq l$ is the same as the remaining three situations, only differing in symbols.
	\begin{eqnarray*}
		&&\sum_{\substack{i=2}}^{n-1}\sum_{\substack{k\neq l+1 \\ l\neq k+1 \\ k\neq l}}^{n-1}\E\left( |R_{i+1}-R_{i}||R_{i}-R_{i-1}||S_{k+1}-S_{k}||S_{l+1}-S_{l}|\right)\\
		&&=\dfrac{(n-3)(n-2)(35n^7+13n^6+83n^5-829n^4+568n^3-954n^2-1556n+4080)}{2700n(n-1)}.
	\end{eqnarray*}
	
	\textbf{Case 3}: $i= j,k\neq l+1,l\neq k+1,k\neq l,$ or $k=l,j\neq i+1,i\neq j+1,i\neq j.$ 
	
	The result of  $i= j,k\neq l+1,l\neq k+1,k\neq l$ is the same as $k=l,j\neq i+1,i\neq j+1,i\neq j$.
	\begin{eqnarray*}
		&&\sum_{i=1}^{n-1}\sum_{\substack{k\neq l+1\\l\neq k+1\\ k\neq l}}^{n-1}\E\left( |R_{i+1}-R_{i}|^2|S_{k+1}-S_{k}||S_{l+1}-S_{l}|\right)\\
		&&=\dfrac{(n-3)(n-2)(5n^6+9n^5+19n^4-13n^3-90n^2+58n+60)}{270n}.
	\end{eqnarray*}
	
	\textbf{Case 4}: $i= j, k= l+1$, or $k=l,i= j+1$, or $i= j,l= k+1$, or $k=l,j= i+1.$ 
	
	The result of $i= j, k= l+1$ is the same as the remaining three situations.
	\begin{eqnarray*}
		&&\sum_{i=1}^{n-1}\sum_{k=2}^{n-1}\E\left( |R_{i+1}-R_{i}|^2|S_{k+1}-S_{k}||S_{k}-S_{k-1}|\right)\\
		&&=\dfrac{(n-2)(7n^6+11n^5+19n^4-23n^3-34n^2-76n-240)}{360n}.
	\end{eqnarray*}
	
	\textbf{Case 5}: $i= j+1,k= l+1$, or $k=l+1,i= j+1$, or   $i= j+1,l= k+1$, or $k=l+1,j= i+1$.
	
	The result of $i= j+1,k= l+1$ is the same as the remaining three situations.
	\begin{eqnarray*}
		&&\sum_{i=2}^{n-1}\sum_{k=2}^{n-1}\E\left( |R_{i+1}-R_{i}||R_{i}-R_{i-1}||S_{k+1}-S_{k}||S_{k}-S_{k-1}|\right)\\
		&&=\dfrac{(n-2)(49n^7+7n^6+55n^5-579n^4-1712n^3+7812n^2-11392n-3840)}{3600n(n-1)}.
	\end{eqnarray*}
	
	\textbf{Case 6}: $i=j$, $k=l.$
	\begin{eqnarray*}
		&&\sum_{i=1}^{n-1}\sum_{k=1}^{n-1}\E\left( |R_{i+1}-R_{i}|^2|S_{k+1}-S_{k}|^2\right)=\dfrac{(n-1)(n^6+n^5+n^4+3n^3-18n^2-8n+24)}{36n}.
	\end{eqnarray*}
	
	Taking the summation over Case 1 to Case 6,
	\begin{eqnarray*}
		&&\E\left[ \left(\sum^{n-1}_{i=1}A_i\right)^2\left(\sum^{n-1}_{j=1}B_j \right)^2\right]\\
		&&=\sum_{\substack{i\neq j+1 \\ j\neq i+1 \\ i\neq j}}^{n-1}\sum_{\substack{k\neq l+1 \\ l\neq k+1 \\ k\neq l}}^{n-1}\E\left( |R_{i+1}-R_{i}||R_{j+1}-R_{j}||S_{k+1}-R_{k}||S_{l+1}-S_{l}|\right)\\
		&&+4\sum_{\substack{i=2}}^{n-1}\sum_{\substack{k\neq l+1 \\ l\neq k+1 \\ k\neq l}}^{n-1}\E\left( |R_{i+1}-R_{i}||R_{i}-R_{i-1}||S_{k+1}-R_{k}||S_{l+1}-S_{l}|\right) \\
		&&+2\sum_{i=1}^{n-1}\sum_{\substack{k\neq l+1\\l\neq k+1\\ k\neq l}}^{n-1}\E\left( |R_{i+1}-R_{i}|^2|S_{k+1}-R_{k}||S_{l+1}-S_{l}|\right) +4\sum_{i=1}^{n-1}\sum_{k=2}^{n-1}\E\left( |R_{i+1}-R_{i}|^2|S_{k+1}-R_{k}||S_{k}-S_{k-1}|\right)\\
		&&+4\sum_{i=2}^{n-1}\sum_{k=2}^{n-1}\E\left( |R_{i+1}-R_{i}||R_{i}-R_{i-1}||S_{k+1}-R_{k}||S_{k}-S_{k-1}|\right)+\sum_{i=1}^{n-1}\sum_{k=1}^{n-1}\E\left( |R_{i+1}-R_{i}|^2|S_{k+1}-R_{k}|^2\right) \\
		&&=(100 n^{10}- 20 n^9+ 116 n^8- 5344 n^7+ 22469 n^6- 55879 n^5+ 99349 n^4-104141 n^3+33846 n^2\\
		&&- 79416 n+195480)/(8100  (-1 + n) n).
	\end{eqnarray*}
	Thus, it is easy to obtain
	\begin{eqnarray*}
		&&\operatorname{Cov}\left(\left(\sum^{n-1}_{i=1}A_i\right)^2,\left(\sum^{n-1}_{j=1}B_j \right)^2\right)\\
		&&=\dfrac{(n-2)(400n^7-2040n^6+6304n^5-14742n^4+21061n^3-10668n^2-4725n-48870)}{4050n(n-1)}.\\
	\end{eqnarray*}
	
	As for the derivation   of $\operatorname{E}\left[ \left(\sum^{n-1}_{i=1}A_i\right)\left(\sum^{n-1}_{j=1}B_j \right)^2\right] $, it is similar to $\operatorname{E}\left[ \left(\sum^{n-1}_{i=1}A_i\right)^2\left(\sum^{n-1}_{j=1}B_j \right)^2\right] $  and we only provide the final result.
	\begin{eqnarray*}
		\E\left[ \left(\sum^{n-1}_{i=1}A_i\right)\left(\sum^{n-1}_{j=1}B_j \right)^2\right]=\dfrac{10n^7+4n^6-n^5-217n^4+543n^3-621n^2+54n+540}{270 n}.
	\end{eqnarray*}
	Then,
	\begin{eqnarray*}
		\operatorname{Cov}\left(\sum^{n-1}_{i=1}A_i,\ \left(\sum^{n-1}_{j=1}B_j \right)^2\right)=\dfrac{(n-2)(20n^4-66n^3+112n^2-87n-135)}{135n}.
	\end{eqnarray*}
	
	For the expectation of $ \left(\sum^{n-1}_{i=1}A_i\right)^2\left(\sum^{n-1}_{j=1}B_j \right) $, we can condition  on $ X_l $, then  $\E\left(  \left(\sum^{n-1}_{i=1}A_i\right)^2\left(\sum^{n-1}_{j=1}B_j \right)\right) $  is the same as $\E\left(  \left(\sum^{n-1}_{i=1}A_i\right)\left(\sum^{n-1}_{j=1}B_j \right)^2\right) $.
	
	Based on the above results, we can obtain
	\begin{eqnarray*}
		&&\operatorname{E}\left(\hat{\xi}_{kl}^2\hat{\xi}_{lk}^2 \right)=\dfrac{(n-2)(16n^7-72n^6+1625n^5-15909n^4+54431n^3-48519n^2-9992n-88140)}{100n(n+1)^4(n-1)^5}.
	\end{eqnarray*}
	
	Therefore, we ultimately obtain the covariance of $ \hat{\xi}_{kl}^2 $ and $ \hat{\xi}_{lk}^2 $ as 
	\begin{eqnarray*}
		&&\operatorname{Cov}\left(\hat{\xi}_{kl}^2,\hat{\xi}_{lk}^2 \right)=\dfrac{(n-2)(784n^5-8022n^4+27301n^3-24228n^2-5045n-44070)}{50n(n+1)^4(n-1)^5}.
	\end{eqnarray*}
\end{proof}

\begin{proof}[\textbf{Proof of Theorem 2.1.}]
	According to the definition of $\hat{\xi}_{kl}$, $\hat{\xi}_{kl}$ is composed of ranks of concomitant  $  X_{l} $ by ordering $  X_{k} $. When $  X_{k} $ and $ X_{l} $ are independent, the component of $\hat{\xi}_{kl}$ only involves $  X_{l} $ and is not affected by $  X_{k} $, and the same goes for $\hat{\xi}_{km}$. Thus, $ \hat{\xi}_{kl} $ and $ \hat{\xi}_{km}$ ($ k\neq l\neq m $) are independent. Similarly, there are also $ \hat{\xi}_{lk} $ and $ \hat{\xi}_{mk} $ ($ k\neq l\neq m $) , $ \hat{\xi}_{kl} $ and $ \hat{\xi}_{mq} $ ($k\neq l\neq m \neq q$). On the contrary, $ \hat{\xi}_{kl} $ and $ \hat{\xi}_{lk} $ are not independent, see the proof of Lemma 2.2 for details. Thus, we cannot directly use the classical  central limit theorem to obtain the asymptotic normality of $ J_\xi $. Fortunately,   $ J_\xi $ can be written as
	\begin{eqnarray*}
		J_\xi=\sigma_{np}^{-1}\sum_{k\neq l}^{p} (\hat{\xi}_{kl}^2-u_n)= \dfrac{1}{\sigma_{np}}\sum_{k=1}^{p-1}\sum_{l=2}^{p}\varphi_{kl},
	\end{eqnarray*}
	where $\varphi_{kl}=\hat{\xi}_{kl}^2+\hat{\xi}_{lk}^2-2u_n$ and is symmetric for $ k $ and $  l $, obviously, for any $1<k\neq l<p$, $\hat{\xi}_{kl}^2+\hat{\xi}_{lk}^2$ are independent and identically distributed, and their expectation exist. Applying the classical Lindeberg-L\'{e}vy central limit theorem, one has 
	$$J_\xi \xrightarrow{d}N(0,1).$$
\end{proof}

\begin{proof}[\textbf{Proof of Theorem 2.2.}]
	Let  $ \mathcal{M}^c $ be the complement of $ \mathcal{M} $ with a cardinality $ p(p-1)-M $. Rewrite $ J_\xi $ as
	$$  J_\xi= \sigma_{np}^{-1}\sum_{(k,l)\in\mathcal{M}} (\hat{\xi}_{kl}^2-u_n)+\sigma_{np}^{-1}\sum_{(k,l)\in\mathcal{M}^c} (\hat{\xi}_{kl}^2-u_n):=J_M+J_M'. $$
	
	First, we consider $J_M$. One has  
	\begin{eqnarray*}
		J_M=\sigma_{np}^{-1}\sum_{(k,l)\in\mathcal{M}} (\hat{\xi}_{kl}^2-u_n)=\sigma_{np}^{-1}\sum_{(k,l)\in\mathcal{M}} (\hat{\xi}_{kl}^2-\xi_{kl}^2)+\sigma_{np}^{-1}\sum_{(k,l)\in\mathcal{M}} (\xi_{kl}^2-u_n):=T_1+T_2. 
	\end{eqnarray*}
	As for $T_1$, for any $ \varepsilon_{n}>0 $, one has 
	\begin{eqnarray*}
		\P\left(|T_1|>\varepsilon_{n} \right) &=&\P\left(\left|  \sigma_{np}^{-1}\sum_{(k,l)\in\mathcal{M}} (\hat{\xi}_{kl}^2-\xi_{kl}^2)\right| >\varepsilon_{n}\right) \\
		&\leqslant&\varepsilon_{n}^{-1}\E\left|  \sigma_{np}^{-1}\sum_{(k,l)\in\mathcal{M}} (\hat{\xi}_{kl}^2-\xi_{kl}^2)\right|\\
		&\leqslant&\varepsilon_{n}^{-1}\sigma_{np}^{-1}\sum_{(k,l)\in\mathcal{M}}\E\left| \hat{\xi}_{kl}^2-\xi_{kl}^2\right| \\
		&\leqslant&2\varepsilon_{n}^{-1}\sigma_{np}^{-1}\sum_{(k,l)\in\mathcal{M}}\E\left| \hat{\xi}_{kl}-\xi_{kl}\right| \\
		&\leqslant&2M\varepsilon_{n}^{-1}\sigma_{np}^{-1}\left[ \E( \hat{\xi}_{kl}-\xi_{kl})^2 \right]^{1/2},
	\end{eqnarray*}
	where the inequalities mentioned above employ Markov's Inequality, $ c_r $ Inequality  and Cauchy-Schwarz Inequality, respectively. Invoking  Proposition 1.1 and  Proposition 1.2 in \cite{lin2022limit}, one has   $$ \E( \hat{\xi}_{kl}-\xi_{kl})^2=\Var\left( \hat{\xi}_{kl}\right) +\left| \E \hat{\xi}_{kl}-\xi_{kl}\right|^2=O\left( \frac{1}{n}\right). $$ 
	Additionally, by Lemma 2.1 and Lemma  2.2, one has   $ u_n=O(n^{-1}) $, $ \sigma_{np}=O(\frac{p}{n}) $. Thus, let  $ \varepsilon_{n}= \dfrac{\sqrt{n}M\log n}{p}$, then there exists some constants $C$, as $ n,p \rightarrow \infty $, 
	\begin{eqnarray*}
		\P\left(|T_1|>\varepsilon_{n} \right)\leqslant 2M\varepsilon_{n}^{-1}\sigma_{np}^{-1}\left[ \E( \hat{\xi}_{kl}-\xi_{kl})^2 \right]^{1/2}=O\left(\dfrac{1}{\log n}\right) \rightarrow0.
	\end{eqnarray*}
	Consequently, one has $$T_1=O_p\left( \dfrac{\sqrt{n}M\log n}{p} \right).$$
	
	For term  $ T_2 $, one has
	$$ T_2=\sigma_{np}^{-1}\sum_{(k,l)\in\mathcal{M}} (\xi_{kl}^2-u_n)=\sigma_{np}^{-1}\sum_{(k,l)\in\mathcal{M}} \xi_{kl}^2-M \sigma_{np}^{-1}u_n\geqslant M \sigma_{np}^{-1}c_{0}^2-M\sigma_{np}^{-1}u_n. $$
	By the fact $ M\sigma_{np}^{-1}u_n=O(\frac{M}{p}) ,$  as $ n,p \rightarrow \infty $ and $ \frac{nM}{p}\rightarrow \infty$, one has
	\begin{eqnarray*}
		J_M=T_1+T_2\geqslant\dfrac{M}{ \sigma_{np}} \left( O_p\left(\dfrac{\log n}{\sqrt{n}} \right)+c_{0}^2-O\left(\dfrac{1}{n} \right) \right) .
	\end{eqnarray*}
	
	Finally, we consider $ J_M'$.  Recall that
	$$ J_M'=\sigma_{np}^{-1}\sum_{(k,l)\in\mathcal{M}^c} (\hat{\xi}_{kl}^2-u_n).$$ 
	Similar to  Lemma 2.1 and Lemma 2.2,   one has $\E \left( J_M'\right) =0$  and $\Var\left(  J_M'\right)  =O\left( \frac{p(p-1)-M }{p^2}\right)$. Therefore, it follows  $$ J_M'=O_p(1).$$

	Thus, as $ n,p\xrightarrow{}\infty $ and $\frac{nM}{p}\xrightarrow{}\infty $,
	$ J_\xi=J_M+J_M'\xrightarrow{p}\infty$. Therefore, $ 	\operatorname{P}(J_\xi>z_q)\xrightarrow{}1 . $ 
	The proof of Theorem 2.2 is completed.
\end{proof}

\begin{proof}[\textbf{Proof of Corollary 2.1.}]
	We continue to use the same symbols from proof of Theorem 2.2. Then
	\begin{eqnarray*}
		\operatorname{P}(J_\xi>z_q)&=&\operatorname{P}\left\lbrace \sigma_{np}^{-1}\left(\sum_{(k,l)\in\mathcal{M}} (\hat{\xi}_{kl}^2-u_n)+\sum_{(k,l)\in\mathcal{M}^c} (\hat{\xi}_{kl}^2-u_n) \right) >z_q\right\rbrace \\
		&=&\operatorname{P}\left\lbrace \sigma_{M}^{-1}\sum_{(k,l)\in\mathcal{M}^c} (\hat{\xi}_{kl}^2-u_n) >\dfrac{\sigma_{np}}{\sigma_M}z_q-\sigma_M^{-1}\sum_{(k,l)\in\mathcal{M}} (\hat{\xi}_{kl}^2-u_n)\right\rbrace\\
		&=&1-\operatorname{P}\left\lbrace \sigma_{M}^{-1}\sum_{(k,l)\in\mathcal{M}^c} (\hat{\xi}_{kl}^2-u_n) \leqslant\dfrac{\sigma_{np}}{\sigma_M}z_q-\sigma_M^{-1}\sum_{(k,l)\in\mathcal{M}} (\hat{\xi}_{kl}^2-u_n)\right\rbrace,   
	\end{eqnarray*}
	where $ \sigma_{M}^2 $ is the variance of $  \sum_{(k,l)\in\mathcal{M}^c} (\hat{\xi}_{kl}^2-u_n) $. When $ n,p\xrightarrow{}\infty $ and $\frac{nM}{p}\xrightarrow{}0 $, obviously, $\frac{M}{p}\xrightarrow{}0 $, and $ p(p-1)-M \rightarrow \infty$, similar to the proof of Theorem 2.2, $\sigma_{M}^{-1}\sum_{(k,l)\in\mathcal{M}^c} (\hat{\xi}_{kl}^2-u_n)\xrightarrow{d}N(0,1)$. With the assistance of $ \sigma_{M}=\sqrt{p(p-1)-M}\times\left( \dfrac{2\sqrt{2}}{5}\times\dfrac{1}{n}+O\left( \dfrac{1}{n^2}\right) \right) $ in Lemma 2.2 and $ u_n=O(\frac{1}{n}) $, we can deduce that
	$$ \sigma_M^{-1} \sum_{(k,l)\in\mathcal{M}} (\hat{\xi}_{kl}^2-u_n)<\sigma_M^{-1} \sum_{(k,l)\in\mathcal{M}} (1-u_n)=M\sigma_M^{-1}(1-u_n) =O\left( \dfrac{nM}{\sqrt{p(p-1)-M}}\right) \rightarrow 0 .$$
	Moreover, according to Lemma 2.1,
	$ \sigma_{np}=\sqrt{p(p-1)}\times\left( \dfrac{2\sqrt{2}}{5}\times\dfrac{1}{n}+O\left( \dfrac{1}{n^2}\right) \right) $, which imply $ \dfrac{\sigma_{np}}{\sigma_M}\rightarrow 1 $ as $ n,p\xrightarrow{}\infty $, further $ \dfrac{\sigma_{np}}{\sigma_M}z_q-\sigma_M^{-1}\sum_{(k,l)\in\mathcal{M}} (1-u_n)\xrightarrow{p} z_q $, then, by Corollary 11.2.3 in \cite{romano2005testing},
	\begin{eqnarray*}
		&&\operatorname{P}\left\lbrace \sigma_{M}^{-1}\sum_{(k,l)\in\mathcal{M}^c} (\hat{\xi}_{kl}^2-u_n) \leqslant\dfrac{\sigma_{np}}{\sigma_M}z_q-\sigma_M^{-1}\sum_{(k,l)\in\mathcal{M}} (\hat{\xi}_{kl}^2-u_n)\right\rbrace\\
		&\geqslant&\operatorname{P}\left\lbrace \sigma_{M}^{-1}\sum_{(k,l)\in\mathcal{M}^c} (\hat{\xi}_{kl}^2-u_n) \leqslant\dfrac{\sigma_{np}}{\sigma_M}z_q-\sigma_M^{-1}\sum_{(k,l)\in\mathcal{M}} (1-u_n)\right\rbrace\rightarrow \Phi(z_q)=1-q.   
	\end{eqnarray*}
	Thus, as $ n,p\xrightarrow{}\infty $ and $\frac{nM}{p}\xrightarrow{}0 $, $\lim_{n,p\xrightarrow{}\infty} \operatorname{P}(J_\xi>z_q) \leqslant q $.
\end{proof}

\begin{proof}[\textbf{Proof of Theorem 3.1.}]
	
	Invoking Theorem 1 in  \cite{arratia1989two}, let $ \psi_{kl}=\hat{\xi}_{kl}/\sqrt{u_n} $, $I =\{(k, l)$ : $1 \leqslant k\neq l \leqslant p\}$. For $\alpha=(k, l) \in I$, let $B_\alpha=\{(r, s) \in I:\{r, s\} \cap\{k, l\} \neq \emptyset\}$  and $A_\alpha=A_{k l}=\left\{\left|\psi_{k l}\right|>t\right\}$, then
	$$
	\left|\operatorname{P}\left(L_n / \sqrt{u_n} \leqslant t\right)-e^{-\lambda}\right| \leqslant b_{1 n}+b_{2 n}+b_{3 n},
	$$
	where
	$$
	\lambda=p(p-1)\operatorname{P}\left(A_{12}\right) .
	$$ 
	Invoking Lemma \ref{lemmaA.1}, we have
	$$
	\operatorname{P}\left(A_{12}\right)=\operatorname{P}\left(\left|\psi_{12}\right|>t\right)=2\{1-\Phi(t)(1+o(1))\} .
	$$ 
	Using the Gaussian distribution inequality, for any $t>0$,
	$$
	\frac{1}{t+1 / t}(2 \pi)^{-1 / 2} \exp \left(-\frac{t^2}{2}\right) \leqslant 1-\Phi(t) \leqslant \frac{1}{t}(2 \pi)^{-1 / 2} \exp \left(-\frac{t^2}{2}\right) .
	$$
	For notational convenience, let
	$$
	t=(4 \log \left( \sqrt{2}p\right) -\log \log \left( \sqrt{2}p\right)+y)^{1 / 2} \asymp(4 \log \left( \sqrt{2}p\right))^{1 / 2}.
	$$ 
	Obviously,  $ t  \rightarrow\infty$  as  $ p \rightarrow\infty$. Then, one has  
	$$ 
	1 / t-1 /(t+1 / t)=1 /\left\{t\left(t^{2}+1\right)\right\} \asymp 1 / t^{3}, $$
	which yields that
	$$
	1-\Phi(t)=\frac{1}{(2 \pi)^{1 / 2} t} \exp \left(-\frac{t^2}{2}\right)\left[1+O\left\{(\log p)^{-3 / 2}\right\}\right] .
	$$ 
	The above results imply 
	$$
	\begin{aligned}
		& \lambda =\frac{\left(\sqrt{2}p \right)^2}{\left[ 8 \pi \log \left(\sqrt{2}p \right)\right] ^{1 / 2}} \exp \left(-\frac{4 \log \left(\sqrt{2}p \right)-\log \log \left(\sqrt{2}p \right)+y}{2}\right)\{1+o(1)\} \\
		& =(8 \pi)^{-1 / 2} \exp \left(-\frac{y}{2}\right)\{1+o(1)\} .
	\end{aligned}
	$$
	
	Next, we will consider  $  b_{1n}$, $b_{2n}$ and $b_{3n} $, respectively. 
	
	$$
	b_{1n}=(4p-6)\times p(p-1) \operatorname{P}\left(A_{12}\right)^2\leqslant 4p^3 \operatorname{P}\left(A_{12}\right)^2,$$
	and
	$$  b_{2n}  \leqslant \sum_{\alpha \in I} \sum_{\beta \neq \alpha, \beta \in B_\alpha} \operatorname{P}\left(A_\alpha \right)\operatorname{P}\left(A_\beta\right)\leqslant (4p-7)p(p-1) \operatorname{P}\left(A_{12}\right)\operatorname{P}\left( A_{13}\right)\leqslant4p^3 \operatorname{P}\left(A_{12}\right)^2 .
	$$
	
	Note that  $ \hat{\xi}_{k l} $ and  $ \hat{\xi}_{l k} $ are not independent for different $ k $ and $  l  .$  Moreover, according to the definition of $A_\alpha$, for each $\alpha=(k,l) \in I$,  the set $\left\{A_\beta, \beta \notin B_\alpha\right\}$ does not contain any elements related to index $ k $ or $ l $, therefore, $A_\alpha$ is independent of $\left\{A_\beta, \beta \notin B_\alpha\right\}. $ Thus $  b_{3n}=0$.
	
	Accordingly, by  means of the Gaussian tail bound $\operatorname{P}\left\{\psi_{kl}>t\right\} \leqslant e^{-t^2 / 2} /\left\{(2 \pi)^{1 / 2} t\right\}$, as $ p \rightarrow \infty, $  we have
	$$
	\begin{aligned} 
		&b_{1n}+b_{2n}+b_{3n} \leqslant \frac{8}{2\pi t^2} p^3 \exp \left(-t^2\right) \\
		&= \frac{8p^3}{2\pi(4 \log \left( \sqrt{2}p\right) -\log \log \left( \sqrt{2}p\right)+y)} \exp \left( -4 \log \left( \sqrt{2}p\right)+\log \log \left( \sqrt{2}p\right)+p\right) \rightarrow 0,
	\end{aligned}
	$$
	which ultimately completes the proof of Theorem 3.1.
\end{proof}

\begin{proof}[\textbf{Proof of Theorem 3.2.}]
	We will continue to keep on symbol $ \mathcal{M}.$  Given $(k',l')\in \mathcal{M}$, one has
	\begin{eqnarray*}
		\P\left( L_{n p}^{2} / u_{n}-c_p>z'_q\right) &=&\P\left(  \max _{1 \leqslant k\neq l \leqslant p}\hat{\xi}_{k l}^{2} / u_{n}-c_p>z'_q\right) \\
		&\geqslant&\P\left(  \hat{\xi}_{k' l'}^{2} / u_{n}-c_p>z'_q\right).
	\end{eqnarray*}
	Recall that $ \hat{\xi}_{k'l'}\xrightarrow{p}\xi_{k'l'}> c_0>0$, $ u_n=O(\frac{1}{n}) $, $ c_p=4 \log ( \sqrt{2}p) -\log \log ( \sqrt{2}p)=O(\log p) $, these imply that as $ n,  p\xrightarrow{}\infty $ and $\frac{\log p}{n}\rightarrow0 $, $ \hat{\xi}_{k' l'}^{2} / u_{n}-c_p \rightarrow\infty.$
	Then, $$ \P\left( L_{n p}^{2} / u_{n}-c_p>z'_q\right)	\geqslant\P\left( \hat{\xi}_{k' l'} ^{2} / u_{n}-c_p>z'_q\right) \rightarrow1.  $$
	The proof of Theorem 3.2 is completed.
\end{proof}

\begin{proof}[\textbf{Proof of Theorem 4.1.}]	
	Let's deal with item (iii) first. Define event
	$$ 	A:=\left\{\max _{1\leqslant k\neq l \leqslant p}\left|\hat{\xi}_{kl}-\xi_{kl}\right| / u_n^{1 / 2}<\delta_{np}\right\},  $$
	then according to Lemma 4.1, $\inf_{\boldsymbol{\xi}_p \in \boldsymbol{\Xi}}\P\left(A\mid\boldsymbol{\xi}_p \right)\rightarrow1$.
	Invoking  the definition of $  S(\boldsymbol{\xi}_p) $, $\left|\xi_{kl}\right|>2 \delta_{np} u_n^{1 / 2} $,  
	for any  $ (k,l) \in S(\boldsymbol{\xi}_p) $, one can draw the following result,
	$$ 
	\frac{\left|\hat{\xi}_{kl}\right|}{ \sqrt {u_n}} \geqslant \frac{\left|\xi_{kl}\right|-\left|\hat{\xi}_{kl}-\xi_{kl}\right|}{\sqrt {u_n}} >\delta_{np} , $$
	which implies that  $ (k,l) \in \hat{S}\left(\boldsymbol{\hat{\xi}}_p \right) $, hence  $ S(\boldsymbol{\xi}_p) \subset \hat{S}\left(\boldsymbol{\hat{\xi}}_p \right) $. Thus,  for  $ \boldsymbol{\xi}_p \in \boldsymbol{\Xi} $,
	$$ \inf _{\boldsymbol{\xi}_p \in \boldsymbol{\Xi}} \P(S(\boldsymbol{\xi}_p) \subset \hat{S}\left(\boldsymbol{\hat{\xi}}_p \right) \mid \boldsymbol{\xi}_p) \rightarrow 1. $$ 
	Moreover, it is readily seen that, under  $ H_{0} $: $ \boldsymbol{\xi}_p=\mathbf{0} $, 
	$$ \P\left(J_{0}=0 \mid H_{0}\right)=\P\left(\hat{S}\left(\boldsymbol{\hat{\xi}}_p \right)=\emptyset \mid H_{0}\right)=\P\left(\max _{1\leqslant k\neq l \leqslant p}\left\{\left|\hat{\xi}_{kl}\right| / u_n^{1 / 2}\right\}<\delta_{np} \mid H_{0}\right) \rightarrow 1. $$
	Item (i) clearly holds true. As for the derivation of item (ii), with $ \boldsymbol{\Xi}_s $ being represented as $ \boldsymbol{\Xi}_s=\{\boldsymbol{\xi}_p\in \boldsymbol{\Xi }_a:S(\boldsymbol{\xi}_p) \neq \emptyset\}$, adopting the same approach as proof of Theorem 3.1 in \cite{fan2015power} with $  \inf _{\xi \in \boldsymbol{\Xi}} \P(S(\boldsymbol{\xi}_p) \subset \hat{S}\left(\boldsymbol{\hat{\xi}}_p \right) \mid \boldsymbol{\xi}_p) \rightarrow 1  $, we can obtain 
	\begin{eqnarray*}
		\sup _{\boldsymbol{\xi}_p \in \boldsymbol{\Xi}_s} \P\left(J_{0} \leqslant \sqrt{p(p-1)} \mid S(\boldsymbol{\xi}_p) \neq \emptyset\right)  \rightarrow 0.
	\end{eqnarray*}
	Therefore,  $ \inf _{\xi \in \boldsymbol{\Xi}_s} \P\left(J_{0}>\sqrt{p(p-1)} \mid S(\boldsymbol{\xi}_p) \neq \emptyset\right) \rightarrow 1 $ .
\end{proof}

\begin{proof}[\textbf{Proof of Theorem 4.2.}]
	Here we mainly verify Assumption 3.1 and the three conditions of Theorem 3.2 in \cite{fan2015power}, then Theorem 4.2 is naturally completed. The verification of Assumption 3.1 has been completed in the proof of Lemma 4.1, where we replaced $ \hat{v}_{kl} $ in \cite{fan2015power} with $ u_n $ and $ u_n $ is the exact variance of $ \hat{\xi}_{kl} $ under $ H_0 $. Conditions (i) and (ii) can be obtained using Theorem 2.1 and Lemma \ref{lemmaA.1}, respectively. Next we will prove Condition (iii).  Recall that  $ J_\xi=\sigma_{np}^{-1}\sum_{k\neq l}^{p} (\hat{\xi}_{kl}^2-u_n) $  and by Lemma 2.1 and Lemma 2.2, $u_n= \dfrac{2}{5}\times\dfrac{1}{n}+O\left( \dfrac{1}{n^2}\right), \sigma_{np}=\sqrt{p(p-1)}\left( \dfrac{2\sqrt{2}}{5}\times\dfrac{1}{n}+O\left( \dfrac{1}{n^2}\right) \right) $. Therefore, as $n,p \rightarrow \infty$, as long as $ c> \dfrac{\sqrt{2}}{2} $, we will have $  c\sqrt{p(p-1)}-\dfrac{p(p-1)u_n}{\sigma_{np}} \rightarrow  \infty.$   Consequently,
	\begin{eqnarray*}
		\inf _{\boldsymbol{\xi}_p \in \boldsymbol{\Xi}_{s}} \P\left(c\sqrt{p(p-1)} +J_{\xi}>z_q \mid \boldsymbol{\xi}_p\right) 
		&\geqslant& \inf _{\boldsymbol{\xi}_p \in \boldsymbol{\Xi}_{s}} \P\left( c\sqrt{p(p-1)}-\dfrac{p(p-1)u_n}{\sigma_{np}}>z_q \mid \boldsymbol{\xi}_p\right) \rightarrow 1 .
	\end{eqnarray*}
	As all conditions have been satisfied, we directly complete the proof of  Theorem 4.2 by applying Theorem 3.2 in \cite{fan2015power}.
\end{proof}

\section*{Acknowledgments}
This work was supported by the National Natural Science Foundation of China (Nos. 11971045, 12271014 and 12071457),  and the Science and Technology Project of Beijing Municipal Education Commission (No. KM202210005012).

\vskip15pt

\bibliographystyle{elsarticle-harv}
\bibliography{High_dimension_chatterjee}

\end{document}